\numberwithin{equation}{section}
\newtheorem{thm}{Theorem}[section]
\newtheorem{definition}[thm]{Definition}
\newtheorem{exm}[thm]{Example}
\newtheorem{lemma}[thm]{Lemma}
\newtheorem{cor}[thm]{Corollary}
\newtheorem{prop}[thm]{Proposition}
\newtheorem{rmk}[thm]{Remark}
\newcommand{\GG}{{^{G}_{G}\mathcal{YD}^{\omega}}}
\newcommand{\HH}{{^{H}_{H}\mathcal{YD}}}
\newcommand{\II}{{^{I}_{I}\mathcal{YD}}}
\newcommand{\Dff}{{^{D_8}_{D_8}\mathcal{YD}_{\operatorname{fd}}}}
\newcommand{\CC}{\mathbb{C}^*}
\newcommand{\MC}{\mathscr{C}}
\newcommand{\MM}{\mathscr{M}}
\newcommand{\stg}{\sigma_{\tau}(1,g)}
\newcommand{\stx}{\sigma_{\tau}(\chi,1)}
\newcommand{\DG}{D^{\omega}(G)}
\newcommand{\GO}{\Gamma^{\omega}}
\newcommand{\VG}{\operatorname{Vec}_G^{\omega}}
\newcommand{\bVG}{\textbf{Vec}_G^{\omega}}
\newcommand{\stxg}{\sigma_{\tau}(\chi,g^b)}
\newcommand{\GGd}{{^{G}_{G}\mathcal{YD}^{\omega}_{\operatorname{fd}}}}
\newcommand{\HHd}{{^{H}_{H}\mathcal{YD}_{\operatorname{fd}}}}
\title{\textbf{ On Gauge Equivalence of Twisted Quantum Doubles }}
\author{BOWEN LI AND GONGXIANG LIU}
\date{}
\setlist{nolistsep}
\begin{document}\large
	\maketitle

	\setlength{\oddsidemargin}{ -1cm}
	\setlength{\evensidemargin}{\oddsidemargin}
	\setlength{\textwidth}{15.50cm}
	\vspace{-.8cm}
	
	\setcounter{page}{1}
	
	\setlength{\oddsidemargin}{-.6cm}  % 3.17cm - 1 inch
	\setlength{\evensidemargin}{\oddsidemargin}
	\setlength{\textwidth}{17.00cm}
 	\thispagestyle{fancy}
	\fancyhf{}
	\fancyfoot[R]{\thepage}
	\fancyfoot[L]{Supported by NSFC 12271243}
	\fancyhead{} % 页眉清空
	\renewcommand{\footrulewidth}{1pt}
	\renewcommand{\headrulewidth}{0pt}
	\begin{abstract}
		We study the quantum double of a ﬁnite abelian group $G$ twisted by a $3$-cocycle and give a sufficient condition when such a twisted quantum double  will  be gauge equivalent to a ordinary quantum double of a finite group. Moreover, we will determine when a twisted quantum double of a cyclic group is genuine. As an application, we contribute to the classification of coradically graded finite-dimensional pointed coquasi-Hopf algebras over  abelian groups. As a byproduct, we show that the Nichols algebras $\mathcal{B}(M_1\oplus M_2 \oplus M_3)$ are infinite-dimensional where $M_1,M_2,M_3$ are three different simple Yetter-Drinfeld modules of $D_8$.
	\end{abstract}
	\section{Introduction}
Given a finite group $G$ and a normalized $3$-cocycle $\omega \in Z^3(G,\mathbb{C}^*)$, Dijkgraaf-Pasquier-Roche defines a certain braided quasi-Hopf algebra(twisted quantum double) $D^{\omega}(G)$ in \cite{DPR}. This article is aimed to study the gauge equivalence  between certain twisted quantum doubles, which leads to tensor equivalence between their representation categories. \par 
We  review the background motivation of $D^{\omega}(G)$ briefly. Although this concept and our results are purely algebraic, the motivation of studying these problems comes from conformal field theory and vertex operator algebra(cf.\cite{Kac98},\cite{Bor86},\cite{FHL93}). The readers only need to understand that vertex operator algebras have a representation theory, especially, in \cite{Huang}, he proved that  if $\mathbb{V}$ is $C_2$-cofinite, rational, CFT type (i.e. $V(1)=\mathbb{C}1$), and self dual (i.e. $\mathbb{V}\cong \mathbb{V}'$), then $\operatorname{Rep}(\mathbb{V})$ is a modular tensor category. Then by reconstruction theory \cite{Majid}, there might be a weak quasi-Hopf algebra $H$ with the property that $\operatorname{Rep}(H) \cong \operatorname{Rep}(\mathbb{V})$ as modular  category. In the context of \cite{DPR}, the authors conjectured that one can take $H$ to be a twisted quantum double $D^{\omega}(G)$ of $G$ in the case when $\mathbb{V}$ is a so-called holomorphic orbifold model, that is there is a simple vertex operator algebra $\mathbb{W}$ and a ﬁnite group of automorphisms $G$ of $\mathbb{W}$ such that $\mathbb{V}=\mathbb{W}^G$, see also \cite{MG}. 

 Now suppose there's a equivalence of braided tensor category $\operatorname{Rep}(\mathbb{W}_1^{G_1}) \cong \operatorname{Rep}(\mathbb{W}_2^{G_2})$, for two holomorphic vertex operator algebras and finite groups $G_1, G_2$. If the conjecture is true, then there must be a braided tensor equivalence:
\begin{align}\label{(1.1)}
	\operatorname{Rep}(D^{\omega_1}(G_1))\cong \operatorname{Rep}(D^{\omega_2}(G_2))
\end{align} 
for some choices of 3-cocycles $\omega_1, \omega_2$ of $G_1, G_2$.
Conversely, deciding when equivalences such as (\ref{(1.1)}) can hold gives information about the vertex operator algebras and the cocycles that they determine. This is an interesting problem in its own right, and is the one we consider here. 
 
The case in which the two twisted quantum doubles in question are commutative, i.e. two groups are abelian and the $3$-cocycles are abelian $3$-cocycle was solved in \cite{Abcocycle}. In \cite{elementary}, the authors dealt with the case when $G_1$  is an elementary abelian 2-group and $G_2$ turns out to be an extra-special $2$-group. Here we are concerned with a particular case of (\ref{(1.1)}) when taking $G_1$ as a finite abelian group, $\omega_1$ is an arbitrary normalized $3$-cocycle and $G_2$ is a finite group, $\omega_2$ is trivial:
\begin{align}\label{(1.2)}
	\operatorname{Rep}(D^{\omega_1}(G_1))\cong \operatorname{Rep}(D(G_2)).
\end{align} 
We will give a  sufficient condition when equivalence (\ref{(1.2)}) holds and the reason we can do it is that we knew an explicit expression of these $3$-cocycles as indicated below. 

Let $G$ be a finite abelian group which is isomorphic to $Z_{m_1}\times Z_{m_2}\times \cdots \times Z_{m_n}$ with $m_i \mid m_{i+1}$ for $1 \leq i \leq n-1$. Thanks to \cite{grcate} and \cite{QQG}, we can write down all representatives of normalized $3$-cocycles on $G$: 
\begin{align}\label{(1.3)}
	& \omega\left( g_1^{i_1} \cdots g_n^{i_n}, g_1^{j_1} \cdots g_n^{j_n}, g_1^{k_1} \cdots g_n^{k_n}\right)  = \prod_{l=1}^n \zeta_{m_l}^{a_l i_l\left[\frac{j_l+k_l}{m_l}\right]} \prod_{1 \leq s<t \leq n} \zeta_{m_s}^{a_{s t} k_s\left[\frac{i_t+j_t}{m_t}\right]} \prod_{1 \leq r<s<t \leq n} \zeta_{\left(m_r, m_s, m_t\right)}^{a_{r s t} k_r j_s i_t}
\end{align}
where $0 \leq a_l <m_l$, $0\leq a_{st} <(m_s,m_t)$, $0\leq a_{rst} <(m_r,m_s,m_t)$. Let \begin{equation}
\underline{a}=(a_1,a_2,...,a_l,...,a_n,a_{12},a_{13},...,a_{st},...,a_{n-1,n},a_{123},...,a_{rst},...a_{n-2,n-1,n}).
\end{equation}For a fixed $\underline{a}$, we define the following sets:
\begin{equation}
	\begin{aligned}
			&A_1:=\left\lbrace i | a_{ij}\neq 0, 1\leq i <j \leq n \right\rbrace, \qquad A_2:=\left\lbrace i | a_{ijk}\neq 0, 1\leq i <j<k \leq n \right\rbrace, \\
		&B_1:= \left\lbrace j | a_{ij}\neq 0, 1\leq i <j \leq n \right\rbrace, \qquad B_2:=\left\lbrace j,k | a_{ijk}\neq 0, 1\leq i <j<k \leq n \right\rbrace. 
	\end{aligned}
\end{equation}
Let $A=A_1 \cup A_2$, $B=B_1 \cup B_2$.  The first main result of the paper is the following one.
\begin{thm}\label{thm1.1}
	Let $G$ be a finite abelian group and $\omega$  a normalized $3$-cocycle on $G$ as in (\ref{(1.3)}). If  the following condition holds:  \begin{itemize}
	    \item[\emph{(i)}] $a_i = 0$ for all $ 1\leq i \leq n$.
     \item[\emph{(ii)}] $A \cap B = \emptyset$.
	\end{itemize}
 Then $D^{\omega}(G)$  will  be gauge equivalent to $D(G')$ for a finite group $G'$.
\end{thm}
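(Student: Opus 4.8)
The plan is to realize the twisted quantum double $\DG$ as an ordinary (untwisted) quantum double by showing that, under the two hypotheses, the $3$-cocycle $\omega$ becomes cohomologically inert in exactly the way that permits a gauge transformation. My starting point is the standard fact that $\operatorname{Rep}(\DG)$ is tensor equivalent to the category $\GG$ of $\omega$-twisted Yetter--Drinfeld modules over $G$, so it suffices to produce a finite group $G'$ together with a braided tensor equivalence $\GG \cong \GGA$. The associativity constraint in $\GG$ is controlled by the $3$-cocycle $\omega$, and the diagonal action/coaction of $G$ interacts with $\omega$ through its slant products; hypothesis (i), namely $a_i = 0$ for all $i$, kills precisely the ``diagonal'' part of $\omega$ (the factor $\prod_l \zeta_{m_l}^{a_l i_l[(j_l+k_l)/m_l]}$), which is the obstruction that would otherwise force a genuinely quasi-associative structure on each cyclic factor.

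Next I would exploit the explicit product form of \eqref{(1.3)}. With the diagonal terms gone, $\omega$ is built only from the ``mixed'' two- and three-variable factors indexed by the pairs $(s,t)$ and triples $(r,s,t)$. The key observation is that each such mixed factor is an \emph{abelian} (or bimultiplicative) cocycle in the relevant variables, and abelian cocycles on $G$ are exactly the ones that can be absorbed by passing to a central extension: one expects to construct $G'$ as an extension of $G$ by a suitable abelian group built from the indices appearing in the $a_{st}$ and $a_{rst}$, so that the untwisted Drinfeld double $D(G')$ reproduces the same braided fusion data. Concretely, I would aim to write down a $2$-cochain $\mu$ on $G$ (a gauge transformation) whose coboundary trivializes $\omega$ after the extension, reducing $\GG$ to the untwisted category $\GGA$ over the larger group $G'$.

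The role of hypothesis (ii), $A \cap B = \emptyset$, is the crucial structural input and I expect it to be where the real work lies. The sets $A$ and $B$ record which coordinate indices appear as ``first/source'' slots versus ``later/target'' slots in the mixed cocycle factors; the disjointness $A \cap B = \emptyset$ guarantees that no single cyclic generator simultaneously plays both roles across the two- and three-variable terms. This separation is what makes the cocycle decompose compatibly with a semidirect or central product decomposition of the candidate group $G'$: it prevents the cross-terms from interfering, so that the gauge transformation on each block can be chosen independently and then assembled. I would formalize this by partitioning the generators of $G$ according to membership in $A$ versus $B$ and verifying that $\omega$ factors as a product over these blocks, each block contributing a standard extension datum.

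The main obstacle, I anticipate, is constructing the group $G'$ explicitly and verifying that the resulting untwisted double $D(G')$ is genuinely gauge equivalent to $\DG$ rather than merely having an equivalent fusion structure; this requires matching not only the fusion rules but the full associativity and braiding, i.e.\ producing the gauge transformation (monoidal natural isomorphism) explicitly and checking the pentagon and hexagon identities are satisfied after the twist. I expect the disjointness condition (ii) to be exactly what is needed to make this explicit cochain well-defined and its coboundary equal to $\omega$; the calculation verifying $\delta\mu = \omega^{-1}$ (up to the extension) on the block-decomposed group is the technical heart, and I would organize it by treating the two-variable factors and three-variable factors separately and then invoking (ii) to rule out obstructing cross-contributions.
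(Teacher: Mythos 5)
There is a genuine gap, and it is fatal to the route you chose. You propose to build $G'$ as an extension of $G$ by a nontrivial abelian group (you explicitly call it ``the larger group $G'$'') and to trivialize the pullback of $\omega$ there by a $2$-cochain $\mu$. But gauge equivalence of quasi-Hopf algebras is a twist of the coalgebra/associator data on a \emph{fixed} underlying algebra, so it preserves dimension; likewise any tensor equivalence $\operatorname{Rep}(D^{\omega}(G))\cong\operatorname{Rep}(D(G'))$ preserves Frobenius--Perron dimension, which equals $|G|^2$ and $|G'|^2$ respectively. Hence $|G'|=|G|$ is forced, and no central extension of $G$ by a nontrivial group can ever serve as $G'$. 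Moreover, the mechanism ``inflate $\omega$ to an extension and kill it with a coboundary'' is the mechanism for \emph{non-genuineness}, i.e.\ gauge equivalence to \emph{some} Hopf algebra (this is exactly how Section 4 of the paper operates, with the extension $\Gamma^{\omega}$ of $G$ by $\widehat{G}$); it does not produce a quantum double. The paper itself shows these are different questions: by Proposition \ref{Prop3.5} and Remark 4.11(ii), for cyclic $G$ with nontrivial $\omega$, $\DG$ is \emph{never} gauge equivalent to any $D(G')$, yet it can fail to be genuine. Your argument, even if completed, would prove the weaker (and different) statement. A further slip: the three-variable factors $\zeta^{a_{rst}k_rj_si_t}$ are precisely the \emph{non-abelian} part of $\omega$ (Lemma \ref{lem2.5}: $\omega_{\underline{a}}$ is abelian iff all $a_{rst}=0$), so calling the mixed factors abelian cocycles misidentifies the structure that must be handled.

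What the paper actually does is different in kind: it proves that $\VG$ is categorically Morita equivalent to $\operatorname{Vec}_{G'}$ using the explicit criterion of Lemma \ref{Lem3.1}, with $H=\prod_{i\in A}Z_{m_i}$, $K=\prod_{j\in I\setminus A}Z_{m_j}$ (so $G=H\times K$, same order throughout), $F=1$, $\varepsilon=1$, and an explicit $2$-cocycle $\hat{F}\in Z^2(K,\widehat{H})$ assembled from the $a_{pq}$ and $a_{rst}$; the Morita dual group is $G'=\widehat{H}\underset{\hat{F}}{\rtimes}K$, generally nonabelian but of order $|G|$. Hypotheses (i) and (ii) are exactly what make $\omega((h_1,k_1),(h_2,k_2),(h_3,k_3))=\hat{F}(k_1,k_2)(h_3)$: condition (ii) guarantees the indices in $A$ enter $\omega$ only through the third argument and those in $B$ only through the first two, so the required factorization holds on the nose. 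One then passes to Drinfeld centers (Morita equivalent fusion categories have braided equivalent centers), giving $\operatorname{Rep}(\DG)\cong\operatorname{Rep}(D(G'))$ as braided categories, and concludes gauge equivalence from the result cited as Theorem 2.2 of the gauge-equivalence paper. If you want to salvage your plan of a direct equivalence between $\GG$ and $\GGA$, you must first produce a candidate $G'$ of the \emph{same order} as $G$; the dual-group construction $\widehat{H}\underset{\hat{F}}{\rtimes}K$ is the correct replacement for your central extension.
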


%The trick of  proof to this theorem is to apply the theory of categorical Morita equivalence, which arose in \cite{Muger},\cite{FJRS}. It is an equivalent relation for tensor categories. An important result with respect to the theory of categorical Morita equivalence is that two fusion categories are categorical Morita equivalent if and only if  their center are braided  equivalent \cite{ENO}. On the other hand,
%in \cite{center}, the authors showed that $\mathcal{Z}(\operatorname{Vec}_G^{\omega})$ is  tensor equivalent to $\operatorname{Rep}(D^{\omega}(G))$. Hence we can reduce the problem to when $\operatorname{Vec}_{G_1}^{\omega_1}$ and $\operatorname{Vec}_{G_2}^{\omega_2}$ are categorical Morita equivalent. The general case of the above question is answered by  \cite{Naidu} and \cite{Morita} with certain conditions. Nevertheless, explicit representatives of $3$-cocycles are not involved in their work. Based on their results, we continue to investigate when $\operatorname{Vec}_{G}^{\omega}$ and $\operatorname{Vec}_{G'}$ are categorical Morita equivalent more explicitly. Once we give a sufficient condition when $\operatorname{Vec}_{G}^{\omega}$ and $\operatorname{Vec}_{G'}$ are categorical Morita equivalent. Then we know when $\DG$ is gauge equivalent to $D(G')$ for a finite group $G'$. This is the  approach to prove Theorem \ref{thm1.1}.

Recall that for a quasi-Hopf algebra $H$, we say $H$ is genuine if it will never be gauge equivalent to a Hopf algebra. 
Obviously, if $D^{\omega}(G)$  is gauge equivalent to $D(G')$ for a finite group $G'$, then $\DG$ isn't genuine. One may ask, if $D^{\omega}(G)$  will never be gauge equivalent to $D(G')$ for a finite group $G'$, then whether $\DG$ is genuine or not. In general, the answer is no. In fact, Theorem 9.4 in  \cite{Abcocycle} tells us if $G$ is of odd order, then $\DG$ is not genuine. But, we will prove for arbitrary finite cyclic group $G$ with a nontrivial $3$-cocycle, the $D^{\omega}(G)$  will never be gauge equivalent to $D(G')$ for a finite group $G'$. Hence, to study the genuineness of a twisted quantum double is another question. Here is some results about this question up to the knowledge of the authors.  Example 9.5 in  \cite{Abcocycle} gives us the first example of genuine twisted quantum, say $D^{\omega}(Z_2)$, where $\omega$ is the nontrivial $3$-cocycle on $Z_2$. In \cite{indicator} Theorem 4.1, the authors showed that if $G$ is abelian, and $\omega$ is an abelian cocycle, then $\DG$ is genuine if and only if there exists $ V \in \operatorname{Rep}(\DG)$ such that $\overline{\nu}(V)=0$, where $\overline{\nu}$ is the total Frobenius-Schur indicator of $\operatorname{Rep}(\DG)$. Let $G$ be a finite cyclic group and $\omega$ a nontrivial $3$-cocycle on $G$. Our second main result is to provide a discriminant method for whether $\DG$ is genuine or not, also though using the explicit expression of $3$-cocycles. Here is the result.
\begin{thm}\label{thm 3.19}
	Let $G \cong Z_m$ be a finite cyclic group and $\omega(g^i,g^j,g^k)= \zeta_m^{ai[\frac{j+k}{m}]}$  for $ 1\leq a < m$. Then $\DG$ is genuine if and only if $(m,2a) \nmid (m,a)$.
\end{thm}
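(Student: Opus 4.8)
The plan is to reduce genuineness to the Frobenius--Schur criterion of \cite{indicator} and then to the vanishing of an explicit quadratic Gauss sum. Before anything else I would dispose of the arithmetic bookkeeping in the statement: since $(m,a)\mid(m,2a)$ and the quotient $(m,2a)/(m,a)$ is always $1$ or $2$, the condition $(m,2a)\nmid(m,a)$ holds if and only if $(m,2a)=2(m,a)$, i.e. if and only if $m/(m,a)$ is even. Thus it is enough to show that $\DG$ is genuine precisely when $m/(m,a)$ is even.

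Next I would pin down the category $\operatorname{Rep}(\DG)$. As $G\cong Z_m$ is cyclic we have $H^2(Z_m,\CC)=0$, so every projective representation of the centralizer (which is all of $G$) is one--dimensional; hence $\operatorname{Rep}(\DG)$ is a pointed modular category with $m^2$ invertible simple objects. Since the gauge class of $\DG$ depends only on $[\omega]\in H^3(G,\CC)$, and since for a cyclic group the comparison map from quadratic forms (abelian $3$-cocycles) onto $H^3(Z_m,\CC)$ is surjective, I may assume $\omega$ is an abelian cocycle and invoke Theorem~4.1 of \cite{indicator}: $\DG$ is genuine if and only if there exists a simple object $V$ with $\overline\nu(V)=0$. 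The problem is thereby converted into locating a zero of the total indicator among the $m^2$ simple objects.

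For the computational core I would first transgress $\omega$ around $g^r$, obtaining the $2$-cocycle $\beta_{g^r}(g^i,g^j)=\zeta_m^{\,ar\left[\frac{i+j}{m}\right]}$ on $G$, trivialize it by $f(i)=\zeta_{m^2}^{\,ari}$, and use this to label the simple objects by pairs $(r,s)\in Z_m\times Z_m$ with ribbon twist $\theta_{(r,s)}=\zeta_{m^2}^{\,a r^2}\,\zeta_m^{\,s r}$. Substituting this modular data into the indicator formula for a pointed braided category collapses $\overline\nu(V)$, for $V=(r_0,s_0)$, to a quadratic Gauss sum in one variable whose quadratic coefficient is carried by $\zeta_{m^2}^{\,a r^2}$ and whose linear coefficient records the shift by $(r_0,s_0)$. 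What remains is the arithmetic question: for which $(m,a)$ does one of these shifted Gauss sums vanish?

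The final step is the Gauss-sum evaluation, and this is where I expect the real work to lie. Writing $d=(m,a)$ and reducing the modulus by $d$ turns each $\overline\nu(V)$ into a classical quadratic Gauss sum over a coset, whose modulus is governed by the residue of the reduced modulus $m/(m,a)$ modulo $4$ together with the linear shift. The subtlety is that although a bare quadratic Gauss sum vanishes only in the $\equiv 2\ (\mathrm{mod}\ 4)$ case, the freedom to choose the shift $(r_0,s_0)$---i.e. to test all simple objects $V$---widens the vanishing locus to exactly ``$m/(m,a)$ even''. The main obstacle is therefore to carry out this bookkeeping cleanly in both directions: to produce, whenever $m/(m,a)$ is even, an explicit $V$ killing $\overline\nu$, and conversely to show that when $m/(m,a)$ is odd no choice of shift can make the sum vanish. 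Matching this dichotomy on the nose with the gcd condition $(m,2a)\nmid(m,a)$ then finishes the proof.
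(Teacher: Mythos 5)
Your setup is sound as far as it goes: the arithmetic reduction ($(m,2a)\nmid(m,a)$ iff $(m,2a)=2(m,a)$ iff $m/(m,a)$ is even) is correct, $\operatorname{Rep}(\DG)$ is indeed pointed with $m^2$ invertible simples since $H^2(Z_m,\CC)=0$ (in fact the given $\omega$ is already abelian, so no cohomological replacement argument is needed), and your twist formula $\theta_{(r,s)}=\zeta_{m^2}^{ar^2}\zeta_m^{sr}$ agrees with what one gets from the trivialization $\tau_{g^r}(g^j)=\zeta_{m^2}^{arj}$. However, there is a genuine gap: the proof stops exactly where the theorem lives. You never state the formula for the total indicator $\overline{\nu}$ of an invertible object, so the claimed collapse to a one-variable shifted Gauss sum is unverified; and the decisive two-directional claim --- that when $m/(m,a)$ is even some shift $(r_0,s_0)$ kills the sum, while for $m/(m,a)$ odd no shift can --- is asserted as the ``expected'' outcome rather than proved. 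Saying that the freedom to choose the shift ``widens the vanishing locus to exactly $m/(m,a)$ even'' is a restatement of the theorem, not an argument; without the explicit evaluation of these sums (including the coset bookkeeping after reducing the modulus by $(m,a)$), nothing has been established in either direction.

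It is also worth noting that your route, even if completed, differs from the paper's. The paper does not compute indicators at all: it uses the companion criterion from the same reference (Lemma 4.5 there) that $\DG$ is genuine iff the inflation of $\omega^{-1}$ along $\GO\rightarrow G$ is a cohomologically nontrivial $3$-cocycle on the group of group-likes $\GO$. It first determines $\GO\cong Z_{(2a,m)}\times Z_{m^2/(2a,m)}$, then settles triviality by a finite, explicit computation: nontriviality when $(m,2a)\nmid(m,a)$ is detected by evaluating $F_3^*(\pi^*(\omega^{-1}))(\Psi_{1,1,1})=(\zeta_m^{-a})^{m/(2a,m)}$, and triviality when $(m,2a)\mid(m,a)$ is exhibited by producing the cochain $g_{1,2}$ demanded by the coboundary criterion. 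If you want to salvage your approach, you would need to import the precise total-indicator formula for pointed modular categories and actually carry out the Gauss-sum evaluation in both directions; the paper's cohomological route avoids that analytic computation entirely and replaces it with checkable finite identities.
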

As an application, we apply Theorem $\ref{(1.1)}$ to the classification of pointed finite-dimensional coquasi-Hopf algebras, which has been  investigated in \cite{rank1},\cite{coquasitriangular},\cite{tame-rep},\cite{FQQR2},\cite{QQG}. 
%Similar to the Hopf case, any coradically graded pointed coquasi-Hopf algebra will be the quasi-version bosonization of a twisted Nichols algebra in twisted Yetter-Drinfeld module category and a group algebra.
Recently, the classification of coradically graded finite-dimensional coquasi-Hopf algebras over abelian groups has been  done in \cite{huang2024classification}.  One of key ingredients in this paper is Proposition 4.1. The proof of this proposition is rather technical and depends on complicated and long computations.  Here we use our method to give a simple proof (see Subsection 5.4 for related illustrations). 
\begin{thm}\label{thm1.3}
Let $G \cong Z_2 \times Z_2 \times Z_2=\left\langle g_1\right\rangle  \times \left\langle g_2\right\rangle  \times \left\langle g_3\right\rangle $ be an abelian group and $\omega$  the $3$-cocycle on $G$:
\begin{align}\label{1.3}
	& \omega\left( g_1^{i_1}  g_2^{i_2}g_3^{i_3}, g_1^{j_1} g_2^{j_2} g_3^{j_3}, g_1^{k_1} g_2^{k_2}g_3^{k_3}\right)  = (-1)^{k_1 j_2 i_3}.
\end{align}
 Let $V_1, V_2, V_3 \in  \GG$ be simple twisted Yetter-Drinfeld modules such that $\operatorname{dim}(V_i)=2$,  $\operatorname{deg}(V_i)=g_i$, $1 \leq i \leq 3$, such that $g_i \rhd v= -v$ for all $v \in V_i$, $1\leq i \leq 3$. Then the Nichols algebra $B(V_1 \oplus V_2 \oplus V_3 )$ is inﬁnite-dimensional.
\end{thm}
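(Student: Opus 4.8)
The plan is to deduce Theorem~\ref{thm1.3} from Theorem~\ref{thm1.1} by transporting the entire problem, through a braided tensor equivalence, into the \emph{untwisted} Yetter--Drinfeld category over a suitable finite group, where classical Nichols algebra technology applies. First I would check that the cocycle \eqref{1.3} meets the hypotheses of Theorem~\ref{thm1.1}: comparing $(-1)^{k_1 j_2 i_3}$ with the general form \eqref{(1.3)} for $G \cong Z_2\times Z_2\times Z_2$ shows that the only nonzero parameter is $a_{123}=1$, so $a_l=0$ for all $l$ and $A=A_2=\{1\}$, $B=B_2=\{2,3\}$, whence $A\cap B=\emptyset$. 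Thus conditions (i) and (ii) hold, and Theorem~\ref{thm1.1} furnishes a finite group $G'$ together with a gauge equivalence $\DG \simeq D(G')$. Since gauge equivalence preserves the dimension of the quasi-Hopf algebra we get $|G'|^2=|G|^2=64$, so $|G'|=8$; concretely I expect $G'\cong D_8$, as announced in the abstract. This yields an equivalence of braided tensor categories $F\colon \GG \xrightarrow{\ \sim\ } \Df$.

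The next step is bookkeeping. Because a braided tensor equivalence carries a braided vector space to an isomorphic one, it sends Nichols algebras to Nichols algebras and preserves their graded dimension. Hence it suffices to identify the images $M_i := F(V_i)$ and to prove that $\mathcal{B}(M_1\oplus M_2\oplus M_3)$ is infinite-dimensional in $\Df$. I would pin down each $M_i$ by its support and centralizer representation: the three two-dimensional simple objects $V_i$, concentrated in degree $g_i$ with $g_i\rhd v=-v$ (self-braiding eigenvalue $-1$), should correspond to three distinct two-dimensional simple Yetter--Drinfeld modules over $D_8$ supported on conjugacy classes of size two, each carrying a character of the centralizer that sends its representative to $-1$. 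Tracking the action and coaction through the explicit gauge transformation underlying Theorem~\ref{thm1.1} is the technically fiddly part of this stage, but it is a finite computation.

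Finally, to establish infinite-dimensionality over $D_8$ I would compute the braiding data $(q_{ij})$ among eigenvectors spanning $M_1,M_2,M_3$, where every diagonal entry is $q_{ii}=-1$ and the off-diagonal products $q_{ij}q_{ji}$ are governed by how the supporting elements conjugate one another. The strategy is then to exhibit a braided subspace $W\subseteq M_1\oplus M_2\oplus M_3$ whose associated generalized Dynkin diagram is \emph{not} of finite type, so that $\mathcal{B}(M_1\oplus M_2\oplus M_3)\supseteq \mathcal{B}(W)$ is infinite-dimensional by the classification of finite-dimensional rank-two Nichols algebras of diagonal type. I expect the reduction to diagonal type to be the main obstacle: one must choose the eigenvectors so that the induced sub-braiding is genuinely diagonal, and then confirm that the resulting datum avoids every entry on Heckenberger's finite list (equivalently, that its Gelfand--Kirillov dimension is positive). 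Should the three modules fail to produce an infinite diagonal rank-two datum directly, the fallback is to show that already one of the pairwise Nichols algebras $\mathcal{B}(M_i\oplus M_j)$ is infinite-dimensional, or to exhibit an explicit infinite family of linearly independent elements in low degrees.
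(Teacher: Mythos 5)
Your first half is exactly the paper's route: the cocycle \eqref{1.3} has $a_{123}=1$ and all other parameters zero, so $A=\{1\}$, $B=\{2,3\}$, $A\cap B=\emptyset$, and Theorem \ref{thm1.1} (via Theorem \ref{thm1.2}) transports the problem to the untwisted category over a group of order $8$. Two points you assert are actually proved in the paper rather than being automatic: that the group is $D_8$ (the paper exhibits an explicit presentation of $\widehat{Z_2}\underset{\hat F}{\rtimes}(Z_2\times Z_2)$ in Proposition \ref{prop4.6}; order $8$ alone does not determine the group), and that the braided equivalence carries Nichols algebras to Nichols algebras (Proposition \ref{prop4.7}; this needs an argument because on the source side $\mathcal{B}(V)$ is a quotient of $T_{\omega}(V)$, formed with a nontrivial associator).

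The genuine gap is your endgame over $D_8$; both of your proposed finishing moves fail. First, a diagonal braided subspace mixing $u\in M_i$ and $v\in M_j$ requires $\deg(u)\rhd v\propto v$, which forces $\deg(u)$ and $\deg(v)$ to commute; for the triples whose supports generate $D_8$ (the cases of Proposition \ref{prop5.11}) no such subspace exists, so Heckenberger's diagonal classification simply cannot be invoked. Moreover, even in the commuting cases, no \emph{rank-two} diagonal datum can witness infinite-dimensionality here: every rank-two sub-datum arising from these modules has both vertex labels $-1$ and edge label a fourth root of unity, and all such data are arithmetic (finite type). Indeed, in Lemma \ref{lem5.8} the obstruction is a $4$-cycle all of whose two-vertex subdiagrams are of finite type, and in Lemma \ref{lem5.9} it is a pair of triangles. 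Second, the fallback of proving some pairwise $\mathcal{B}(M_i\oplus M_j)$ infinite-dimensional is false over $D_8$: reading off the braiding matrix in Lemma \ref{lem5.9}, the pair $M_3\oplus M_4$ (both supported on $\mathcal{O}_y$) has generalized Dynkin diagram equal to two disjoint $A_2$-components at $q=-1$, so $\mathcal{B}(M_3\oplus M_4)$ is finite-dimensional (of dimension $64$); consistently, the Cartan matrices computed in Proposition \ref{prop5.11} show every pair inside the hard triples is of type $A_2$. The infinite-dimensionality is thus an irreducibly rank-three phenomenon, which is why the paper proves Theorem \ref{thm5.7} for \emph{all} triples of pairwise non-isomorphic simples (making any identification of the images $F(V_i)$ unnecessary) using braid-indecomposability, Cartan matrices, and Heckenberger's skeleton classification for rank $\geq 3$ over non-abelian groups (Theorem \ref{thm4.3}), together with rank-four and rank-six (not rank-two) diagonal computations in the abelian-support cases.
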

In order to apply our method to show above theorem, we also proved that the Nichols algebras $\mathcal{B}(M_1\oplus M_2 \oplus M_3)$ are always infinite-dimensional where $M_1,M_2,M_3$ are three different simple Yetter-Drinfeld modules of $D_8$.
%It should be noted  that $\operatorname{deg}(V_i)=g_i$ means  $V_i={^{g_i}V_i}$, where $ ^gV_i:=\left\lbrace v \in V\mid \delta(v)=g\otimes v\right\rbrace$ and $\delta$ is the comodule structure of $V_i$. Since $V_1$, $V_2$ and $V_3$ has different degree, they are pairwise nonisomorphic simple Yetter-Drinfeld modules.\par 
 %The methods to prove the theorem are as follows, in \cite{YD-module}, the author suggested twisted Yetter-Drinfeld module category over a group $G$ with a $3$-cocycle $\omega$ can be regarded as the center of  $\textbf{Vec}_G^{\omega}$. The theory of categorical Morita equivalence is employed here. We may turn the study of  twisted Nichols algebra in twisted Yetter-Drinfeld module category to the study of usual Nichols algebra in Yetter-Drinfeld module category such that we may apply the beautiful  classification result of finite-dimensional Nichols algebra over a finite group. For further knowledge, see \cite{Angiono},\cite{AHS},\cite{H1} and \cite{Hec}.  In particular, Heckenbeger has classified all finite-dimensional Nichols algebra over a non-abelian group of rank $\geq 3$ in \cite{Hec}. Finally, we may show such a  twisted nondiagonal Nichols algebra, hence the induced coquasi-Hopf algebra is infinite-dimensional.\par 

Here is the layout of the paper. Section $2$ is devoted to some preliminary materials. In Section $3$, we  provide a sufficient condition that when  the twisted quantum double of a finite abelian group will  be gauge equivalent to the ordinary quantum double of a finite group. In Section $4$, we give a criterion when a twisted quantum double of a finite cyclic group is genuine.  Section $5$ is concentrated on above application.
%, based on the classification result of finite-dimensional Nichols algebra over dihedral group $D_4$, we give an application to the classification of finite-dimensional coquasi-Hopf algebra. 	\par 

Throughout this paper, $\vmathbb{k}$ is an algebraically closed ﬁeld with characteristic zero and all linear spaces are over $\vmathbb{k}$. $\textbf{Vec}_G^{\omega}$ is the tensor category of $G$-graded vector spaces with associativity defined by $\omega$ and $\VG$ is the fusion category of finite dimensional $G$-graded vector spaces with associativity defined by $\omega$. $\textbf{Rep}(D^{\omega}(G))$ is the tensor category of representations of $D^{\omega}(G)$ while $\operatorname{Rep}(D^{\omega}(G))$ is the fusion category of finite-dimensional representations of $D^{\omega}(G)$.
\section{Preliminaries}
Here we recall some necessary notions and results.  
\subsection{$3$-cocycles of finite abelian groups}
 %The problem of giving a complete list of the representatives of the $3$-cohomology classes in $H^3(G,
%\mathbb{C}^{\times})$ for all finite abelian groups has been solved in \cite{QQG}, which can be described as follows: \par 
By Fundamental theorem of finite abelian groups, any finite abelian group is of the form: $Z_{m_1}\times Z_{m_2}\cdots \times Z_{m_n}$ with $m_i \mid m_{i+1}$ for $1 \leq i \leq n-1$. Denote $\mathscr{A}$ the the set of all $\mathbb{N}$-sequences:
$$\underline{a}=(a_1,a_2,...,a_l,...,a_n,a_{12},a_{13},...,a_{st},...,a_{n-1,n},a_{123},...,a_{rst},...a_{n-2,n-1,n})$$
such that $0 \leq a_l <m_l$, $0\leq a_{st} <(m_s,m_t)$, $0\leq a_{rst} <(m_r,m_s,m_t)$ for $1\leq l,s,t,r\leq n$. %Where $a_{ij}$ and $a_{rst}$ are ordered in the lexicographic order of their indices. 
Let $g_i$ be a generator of $Z_{m_i}$, $1 \leq i \leq n$. For each $\underline{a} \in \mathscr{A}$, define
\begin{equation}\label{2.1}
	\begin{aligned}
		& \omega_{\underline{a}}: G\times G \times G\rightarrow \mathbb{C}^{\times}\\
		&{\left[g_1^{i_1} \cdots g_n^{i_n}, g_1^{j_1} \cdots g_n^{j_n}, g_1^{k_1} \cdots g_n^{k_n}\right]} \\
		& \quad \mapsto \prod_{l=1}^n \zeta_{m_l}^{a_l i_l\left[\frac{j_l+k_l}{m_l}\right]} \prod_{1 \leq s<t \leq n} \zeta_{m_s}^{a_{s t} k_s\left[\frac{i_t+j_t}{m_t}\right]} \prod_{1 \leq r<s<t \leq n} \zeta_{\left(m_r, m_s, m_t\right)}^{a_{rst}k_rj_si_t}.
	\end{aligned}
\end{equation}

Here $\zeta_m$ represents an $m$-th primitive root of unity. The following gives us the desired expression.
\begin{lemma}[\cite{QQG} Proposition 3.8]
The set $\left\lbrace \omega_{\underline{a}}| \underline{a}\in A \right\rbrace $ forms a complete set of representatives of the normalized $3$-cocycles on $G$ up to $3$-cohomology.
\end{lemma}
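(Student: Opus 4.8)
The plan is to reduce the statement to a cohomology computation and then match cardinalities. Since $\mathbb{C}^{*}$ is a divisible (hence injective) $\mathbb{Z}$-module and $G$ is finite, the exponential sequence $0\to\mathbb{Z}\to\mathbb{C}\to\mathbb{C}^{*}\to 0$ together with the vanishing $H^{i}(G,\mathbb{C})=0$ for $i>0$ yields a connecting isomorphism $H^{3}(G,\mathbb{C}^{*})\cong H^{4}(G,\mathbb{Z})$. So it suffices to compute $H^{4}(G,\mathbb{Z})$ and to exhibit enough explicit normalized $3$-cocycles to realize every class. To get the order I would feed $H^{*}(Z_{m},\mathbb{Z})$ — which is $\mathbb{Z}$ in degree $0$, $Z_{m}$ in each positive even degree, and $0$ in odd degrees — into the Künneth formula and induct on the number of cyclic factors. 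Because all odd cohomology vanishes, the only contributions in degree $4$ are the tensor terms $H^{4}(Z_{m_{l}})\cong Z_{m_{l}}$, the tensor terms $H^{2}(Z_{m_{s}})\otimes H^{2}(Z_{m_{t}})\cong Z_{(m_{s},m_{t})}$, and the $\operatorname{Tor}$ terms arising when the $H^{3}$ of a two-factor subproduct (itself a $\operatorname{Tor}$, namely $Z_{(m_{r},m_{s})}$) is paired with an $H^{2}$ of a third factor, giving $\operatorname{Tor}(Z_{(m_{r},m_{s})},Z_{m_{t}})\cong Z_{(m_{r},m_{s},m_{t})}$. An inductive check shows no four-factor terms appear in degree $4$, so
$$H^{4}(G,\mathbb{Z})\cong\bigoplus_{l}Z_{m_{l}}\ \oplus\ \bigoplus_{s<t}Z_{(m_{s},m_{t})}\ \oplus\ \bigoplus_{r<s<t}Z_{(m_{r},m_{s},m_{t})},$$
whose order is exactly $\prod_{l}m_{l}\cdot\prod_{s<t}(m_{s},m_{t})\cdot\prod_{r<s<t}(m_{r},m_{s},m_{t})=|\mathscr{A}|$.

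Next I would show that the assignment $\underline{a}\mapsto\omega_{\underline{a}}$ lands in normalized $3$-cocycles and induces a homomorphism to $H^{3}(G,\mathbb{C}^{*})$. Normalization is immediate since every exponent vanishes when one argument is the identity. The $3$-cocycle identity is verified factor by factor: the diagonal factors $\zeta_{m_{l}}^{a_{l}i_{l}[\frac{j_{l}+k_{l}}{m_{l}}]}$ are the standard cyclic-group cocycles, while the mixed factors $\zeta_{m_{s}}^{a_{st}k_{s}[\frac{i_{t}+j_{t}}{m_{t}}]}$ and $\zeta_{(m_{r},m_{s},m_{t})}^{a_{rst}k_{r}j_{s}i_{t}}$ are checked by substituting into the identity and using that $[\frac{x+y}{m}]$ records the carry in addition modulo $m$. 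Since all the data live in the exponent, the product of two such cocycles adds the parameters, so $\underline{a}\mapsto[\omega_{\underline{a}}]$ is a homomorphism from the abelian group $\mathscr{A}$, equipped with its natural product structure $\prod Z_{m_{l}}\times\prod Z_{(m_{s},m_{t})}\times\prod Z_{(m_{r},m_{s},m_{t})}$, into $H^{3}(G,\mathbb{C}^{*})$.

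The decisive step is injectivity, after which the cardinality match from the first paragraph forces bijectivity and hence the result. I would separate the three types of parameters by restriction to coordinate subgroups. Restricting $\omega_{\underline{a}}$ to $\langle g_{l}\rangle\cong Z_{m_{l}}$ kills every mixed factor (those involve distinct generators) and leaves the standard generator of $H^{3}(Z_{m_{l}},\mathbb{C}^{*})\cong Z_{m_{l}}$ raised to the power $a_{l}$; this recovers the $a_{l}$ and shows they are cohomologically independent. Assuming the $a_{l}$ vanish, restriction to $\langle g_{s},g_{t}\rangle\cong Z_{m_{s}}\times Z_{m_{t}}$ isolates the $Z_{(m_{s},m_{t})}$-summand detected by the class of $\zeta^{k_{s}[\frac{i_{t}+j_{t}}{m_{t}}]}$, recovering $a_{st}$ modulo $(m_{s},m_{t})$; likewise, restriction to $\langle g_{r},g_{s},g_{t}\rangle$ and pairing against the triple class of $\zeta^{k_{r}j_{s}i_{t}}$, which generates the $Z_{(m_{r},m_{s},m_{t})}$-summand, recovers $a_{rst}$. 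Hence $[\omega_{\underline{a}}]=1$ forces $\underline{a}=0$.

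The main obstacle is this injectivity/separation argument: one must confirm that the mixed cocycles genuinely generate the off-diagonal summands of $H^{3}$ and are not coboundaries, which requires either explicit pairings against the corresponding homology cycles of $\langle g_{s},g_{t}\rangle$ and $\langle g_{r},g_{s},g_{t}\rangle$, or an identification of the restriction maps with the Künneth projections. A secondary difficulty is the honest verification of the $3$-cocycle identity for the triple factor $\zeta_{(m_{r},m_{s},m_{t})}^{a_{rst}k_{r}j_{s}i_{t}}$, together with the careful bookkeeping in the Künneth induction needed to make the greatest-common-divisor moduli (rather than the individual $m_{i}$) appear correctly; this is precisely where the reduction of the parameter ranges to $(m_{s},m_{t})$ and $(m_{r},m_{s},m_{t})$ is forced.
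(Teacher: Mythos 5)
The paper offers no proof of this lemma: it is imported verbatim from \cite{QQG}, Proposition 3.8, so your attempt has to be measured against the proof in that source, part of whose machinery the present paper reproduces in Subsection 4.2. That proof is constructive: for $G=Z_{m_1}\times\cdots\times Z_{m_n}$ one builds an explicit free resolution $(K_\bullet,d_\bullet)$ of $\mathbb{Z}$ over $\mathbb{Z}G$ (a twisted tensor product of the periodic resolutions of the cyclic factors), reads off $H^3(G,\mathbb{C}^\times)\cong\bigoplus_l Z_{m_l}\oplus\bigoplus_{s<t}Z_{(m_s,m_t)}\oplus\bigoplus_{r<s<t}Z_{(m_r,m_s,m_t)}$ from a concrete coboundary criterion on $\operatorname{Hom}_{\mathbb{Z}G}(K_3,\mathbb{C}^\times)$ (quoted in this paper as the lemma citing \cite{QQG}, Lemma 3.3), and then transports representatives to the bar resolution along a chain map $F_\bullet\colon K_\bullet\to B_\bullet$; the formula for $\omega_{\underline{a}}$ is precisely the image under $F_3^*$ of the obvious cocycles on the small resolution, so completeness and pairwise non-cohomologousness come out simultaneously. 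Your route --- connecting isomorphism $H^3(G,\mathbb{C}^*)\cong H^4(G,\mathbb{Z})$, a Künneth induction to count $|H^4(G,\mathbb{Z})|=|\mathscr{A}|$ (your bookkeeping is right: no four-index terms occur in degree $4$), then a homomorphism $\underline{a}\mapsto[\omega_{\underline{a}}]$ whose injectivity is argued by restriction to the coordinate subgroups $\langle g_l\rangle$, $\langle g_s,g_t\rangle$, $\langle g_r,g_s,g_t\rangle$ --- is genuinely different: it buys the abstract group structure of $H^3$ without constructing any resolution, at the price of having to certify by hand that your explicit cocycles realize the off-diagonal summands.

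That price is exactly where your proposal has its one substantive gap, which you flag but do not close, and it is the crux rather than a routine verification. Two things are needed: (a) the class of the mixed cocycle is trivial when $(m_s,m_t)\mid a_{st}$ (similarly for $a_{rst}$) --- this is already required for your map to be well defined on $\mathscr{A}$ with its product group structure, since the formula $\zeta_{m_s}^{a_{st}k_s[\frac{i_t+j_t}{m_t}]}$ has visible period $m_s$, not $(m_s,m_t)$, in the parameter; and (b) the converse, that nontrivial parameters in the stated ranges give nontrivial classes. Neither follows from the cardinality count: counting plus injectivity yields surjectivity, but injectivity itself needs the order computation for each type of class, and a priori the mixed cocycles could fail to generate the $Z_{(m_s,m_t)}$- and $Z_{(m_r,m_s,m_t)}$-summands. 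The gap is fillable, and the cleanest way to fill it is the very machinery of \cite{QQG} that this paper later uses for the genuineness computation in Section 4: pull back along $F_3$ and evaluate on $\Psi_{s,s,t}$, $\Psi_{s,t,t}$, $\Psi_{r,s,t}$, which detects the order of each class exactly and turns your restriction-separation scheme into a complete argument. As written, however, the decisive nondegeneracy step is deferred, whereas the cited proof obtains it for free from the small resolution.
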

\begin{rmk}
	\emph{We choose a slightly different representatives of normalized $3$-cocycles on $G$, as they are actually cohomologous to the formula (3.10) in \cite{QQG} for a fixed $\underline{a}$. We choose these representatives for convenience later.}
\end{rmk}
\subsection{Twisted quantum doubles}
 We recall the definition of the twisted quantum double for the completeness of the article.
\begin{definition}
	The twisted quantum double $D^{\omega}(G)$ of a finite group $G$ with respect to the $3$-cocycle $\omega$ on $G$ is the semisimple quasi-Hopf algebra with underlying vector space $(kG)^* \otimes kG$ in which multiplication, comultiplication $\Delta$, associator $\phi$, counit $\varepsilon$, antipode $\mathcal{S}$, $\alpha$ and $\beta$ are given by
	
	$$
	\begin{aligned}
		&(e(g) \otimes x)(e(h) \otimes y)=\theta_g(x, y) \delta_{g^x, h} e(g) \otimes x y, \\
		&\Delta(e(g) \otimes x)  =\sum_{h k=g} \gamma_x(h, k) e(h) \otimes x \otimes e(k) \otimes x, \\
		&\phi  =\sum_{g, h, k \in G} \omega(g, h, k)^{-1} e(g) \otimes 1 \otimes e(h) \otimes 1 \otimes e(k) \otimes 1, \\
		&\mathcal{S}(e(g) \otimes x)  =\theta_{g^{-1}}(x, x^{-1})^{-1} \gamma_x(g, g^{-1})^{-1} e(x^{-1} g^{-1} x) \otimes x^{-1}, \\
		&\varepsilon(e(g) \otimes x)  =\delta_{g, 1}, \quad \alpha=1, \quad \beta=\sum_{g \in G} \omega(g, g^{-1}, g) e(g) \otimes 1,
	\end{aligned}
	$$
where $\left\lbrace e(g)| g\in G\right\rbrace $ is the dual basis of $\left\lbrace g \in G\right\rbrace $,$\delta_{g,1}$ is the Kronecker delta, $g^x=x^{-1}gx$, and
\begin{align*}
	\theta_g(x,y)&=\frac{\omega(g,x,y)\omega(x,y,(xy)^{-1}gxy)}{\omega(x,x^{-1}gx,y)},\\
	\gamma_g(x,y)&= \frac{\omega(x,y,g)\omega(g,g^{-1}xg,g^{-1}yg)}{\omega(x,g,g^{-1}yg)}
\end{align*}
for any $x,y,g \in G$.\end{definition}
We may use $\DG$ to define abelian cocycles, which has been studied deeply in \cite{Abcocycle}.
\begin{definition}
	A $3$-cocycle $\omega$ on an abelian group $G$ is called abelian if $\DG$ is a commutative algebra.
\end{definition}
Using formula \eqref{2.1}, there's a nice description when the $3$-cocycle $\omega_{\underline{a}}$ is abelian:
\begin{lemma}[\cite{QQG}, Proposition 3.14] \label{lem2.5}
	The $3$-cocycle $\omega_{\underline{a}}$ is abelian if and only if 
	$$a_{rst}=0$$
	for all $1\leq r<s<t\leq n$.
\end{lemma}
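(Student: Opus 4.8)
The plan is to work directly with the multiplication of $D^{\omega_{\underline{a}}}(G)$ and reduce commutativity to a symmetry condition on the factor $\theta_g$. Since $G$ is abelian, every conjugation $g^x=x^{-1}gx$ collapses to $g$, so the structure constant $\delta_{g^x,h}$ becomes $\delta_{g,h}$ and the product of two basis elements reads $(e(g)\otimes x)(e(h)\otimes y)=\theta_g(x,y)\,\delta_{g,h}\,e(g)\otimes xy$. Comparing this with the product taken in the opposite order (and using $xy=yx$) shows that $D^{\omega_{\underline{a}}}(G)$ is commutative if and only if $\theta_g(x,y)=\theta_g(y,x)$ for all $g,x,y\in G$. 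Abelianness of $G$ also simplifies $\theta_g$ itself to $\theta_g(x,y)=\omega(g,x,y)\omega(x,y,g)/\omega(x,g,y)$, since all conjugated arguments become ordinary ones.

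Next I would substitute the explicit formula \eqref{2.1} and track the three types of factors separately. Writing $g=\prod g_l^{p_l}$, $x=\prod g_l^{q_l}$, $y=\prod g_l^{r_l}$, a direct bookkeeping shows that both the diagonal factors (those carrying $a_l$) and the pairwise factors (those carrying $a_{st}$) in $\theta_g(x,y)$ collapse, after the cancellations forced by $\omega(x,g,y)$ in the denominator, to expressions that are manifestly symmetric under $x\leftrightarrow y$, i.e. under $q\leftrightarrow r$. Hence these contributions drop out of the ratio $\theta_g(x,y)/\theta_g(y,x)$ entirely. The only surviving part comes from the triple factors carrying $a_{rst}$, and for each ordered triple $\alpha<\beta<\gamma$ its contribution to $\theta_g(x,y)/\theta_g(y,x)$ is $\zeta_{(m_\alpha,m_\beta,m_\gamma)}^{-a_{\alpha\beta\gamma}D_{\alpha\beta\gamma}}$, where $D_{\alpha\beta\gamma}$ is exactly the $3\times 3$ determinant of the matrix whose rows are the $(\alpha,\beta,\gamma)$-components of $g,x,y$. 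Recognizing this antisymmetric combination as a determinant is the computational crux, and it makes the equivalence transparent.

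With this reduction, commutativity is equivalent to $\prod_{\alpha<\beta<\gamma}\zeta_{(m_\alpha,m_\beta,m_\gamma)}^{-a_{\alpha\beta\gamma}D_{\alpha\beta\gamma}(g,x,y)}=1$ for all $g,x,y\in G$. The direction ``all $a_{rst}=0\Rightarrow$ abelian'' is then immediate. For the converse I would argue contrapositively: if some $a_{\alpha_0\beta_0\gamma_0}\neq 0$, choose $g=g_{\gamma_0}$, $x=g_{\beta_0}$, $y=g_{\alpha_0}$. Then $g,x,y$ are supported only on the positions $\{\alpha_0,\beta_0,\gamma_0\}$, so every determinant $D_{\alpha\beta\gamma}$ vanishes except the one for the triple $(\alpha_0,\beta_0,\gamma_0)$, whose matrix is the anti-diagonal permutation matrix with determinant $-1$. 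The product therefore reduces to the single factor $\zeta_{(m_{\alpha_0},m_{\beta_0},m_{\gamma_0})}^{a_{\alpha_0\beta_0\gamma_0}}$, which is a nontrivial root of unity because $0<a_{\alpha_0\beta_0\gamma_0}<(m_{\alpha_0},m_{\beta_0},m_{\gamma_0})$. This contradicts commutativity, completing the proof.

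The main obstacle I anticipate is the careful index bookkeeping in the middle step: one must evaluate $\omega$ on the three permuted triples $(g,x,y)$, $(x,y,g)$, $(x,g,y)$ and verify that the diagonal and pairwise factors really do cancel down to symmetric expressions. This is routine but error-prone, and the cleanest way to organize it is to treat the three product types in \eqref{2.1} independently, since each depends on the disjoint data $a_l$, $a_{st}$, $a_{rst}$. Once the determinant identity is in hand, the isolation argument for the converse is straightforward.
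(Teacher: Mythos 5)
Your proposal is correct, and it is worth noting at the outset that the paper itself gives no proof of this lemma: it is quoted directly from \cite{QQG}, Proposition 3.14. Your argument is a legitimate, self-contained verification that matches the paper's Definition 2.4 of abelianness (commutativity of $D^{\omega}(G)$). Each step checks out: for abelian $G$ the product rule collapses to $(e(g)\otimes x)(e(h)\otimes y)=\theta_g(x,y)\delta_{g,h}\,e(g)\otimes xy$, so commutativity is exactly symmetry of $\theta_g(x,y)=\omega(g,x,y)\omega(x,y,g)/\omega(x,g,y)$ in $(x,y)$ — this is the same quantity the paper later denotes $\omega_g$ in Section 4. I verified the bookkeeping: in $\theta_g(x,y)$ the $a_l$-factors of $\omega(x,y,g)$ and $\omega(x,g,y)$ cancel outright, leaving the symmetric $\prod_l\zeta_{m_l}^{a_l p_l[\frac{q_l+r_l}{m_l}]}$; likewise the $a_{st}$-factors reduce to $\prod_{s<t}\zeta_{m_s}^{a_{st}p_s[\frac{q_t+r_t}{m_t}]}$, also symmetric in $q\leftrightarrow r$; and the exponent of the triple factor in $\theta_g(x,y)/\theta_g(y,x)$ is indeed $-\det$ of the matrix with rows the $(\alpha,\beta,\gamma)$-components of $g,x,y$. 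The isolation step in the converse is also sound: with $g=g_{\gamma_0}$, $x=g_{\beta_0}$, $y=g_{\alpha_0}$, only the triple $(\alpha_0,\beta_0,\gamma_0)$ survives, its determinant is $-1$, and the constraint $0<a_{\alpha_0\beta_0\gamma_0}<(m_{\alpha_0},m_{\beta_0},m_{\gamma_0})$ forces a nontrivial root of unity. Compared with the route in \cite{QQG}, which establishes the criterion through the symmetry properties of the associated quasi-bicharacters $\widetilde{\omega}_g$, your reduction is essentially the same computation, but your observation that the antisymmetric triple contribution is precisely a $3\times 3$ determinant is a clean organizing device that makes both directions transparent and the converse a one-line evaluation.
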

\subsection{Module category and categorical Morita equivalence}
Module category is an important tool in the theory of tensor category. It is parallel to the module theory over a ring. The definition is similar to the definition of a tensor category.See \cite{tensor} Section 7 for explicit definitions.
 The theory of categorical Morita equivalence is a categorical analogue of Morita equivalence in ring theory, which plays an important role in the theory of module category.
 %Before we give the definition of categorical Morita equivalence, we introduce some basic definitions at first.
\begin{definition}
	Let $\MC$ be a tensor category with enough projective objects. A module category $\MM$ over $\MC$ is called exact if for any projective
	object $P \in \MC$ and any object $M \in \MM$ the object $P \otimes M$ is projective in $\MM$.
\end{definition}
For an exact indecomposable right module category, one can form the dual category $\mathscr{C}^*_{\mathscr{M}} := \operatorname{Fun}_{\MC}(\MM , \MM)$, that is, the category of module functors from $\MM$ to itself. It is known that
$\mathscr{C}^*_{\mathscr{M}}$ is also a tensor category.
 %by composition of functors, namely for $(\gamma^1,F_1)$, $(\gamma^2,F_2) \in \operatorname{Obj}(\mathscr{C}^*_{\mathscr{M}})$. Where $\gamma^1$ and $\gamma^2$ represent the module structures on the functors $F_1$and $F_2$ respectively, we define the tensor structure by 
%$$(\gamma^1,F_1) \otimes (\gamma^2,F_2):= (\gamma, F_1\circ F_2)$$
%where the module structure $\gamma$ is defined by $\gamma_{M,X}:= \gamma^1_{F_2(M),X} \circ F_1(\gamma^2_{M,X})$ for $M \in \MM$ and $X \in \MC$. For two morphisms $\eta: (\gamma^1,F_1) \rightarrow (\gamma^2, F_2)$ and $\eta': (\gamma'^{1},F_1') \rightarrow (\gamma'^{2}, F_2')$, their tensor product is $(\eta \otimes \eta')(M):=\eta_{F_2'(M)}\circ F_1(\eta'_M)$.

\begin{definition}
	Let $\MC$, $\mathscr{D}$ be tensor categories. We will say that $\MC$ and  $\mathscr{D}$ are categorical Morita equivalent if there is an exact indecomposable $\MC$-module category $\MM$ and a tensor equivalence $\mathscr{D}^{op}\cong \mathscr{C}^*_{\mathscr{M}}$.
\end{definition}
Here is a basic example of categorical Morita equivalence.
\begin{exm}\rm
	Let $G$ be a ﬁnite group and let $\MC= \operatorname{Vec}_G$. The category $\operatorname{Vec}$ is an exact $\operatorname{Vec}_G$-module category via the forgetful tensor functor $\operatorname{Vec}_G \rightarrow\operatorname{Vec}$. Consider the dual category 
	$(\operatorname{Vec}_G)^*_{\operatorname{Vec}}$. By deﬁnition, a $ \operatorname{Vec}_G$-module endofunctor $F$ of \ $\operatorname{Vec}$ consists of a vector space $V:= F(\vmathbb{k})$ and a collection of isomorphisms
	$$\gamma_g \in \operatorname{Hom}(F(\delta_g\otimes \vmathbb{k}),\delta_g \otimes F(\vmathbb{k})))=\operatorname{End}_{\vmathbb{k}}(V).$$
	By axiom of module functor, the map $g \mapsto \gamma_g: G \longrightarrow \operatorname{GL}(V)$ is a representation of $G$ on $V$. Conversely, any such representation determines a $\operatorname{Vec}_G$-module endofunctor of  $\operatorname{Vec}$. The homomorphisms of representations are precisely morphisms between the corresponding module functors. Thus, $(\operatorname{Vec}_G)^*_{\operatorname{Vec}}\cong  \operatorname{Rep}(G)^{\operatorname{op}}$, i.e., the categories $\operatorname{Vec}_G$  and $\operatorname{Rep}(G)$ are categorical Morita equivalent.
\end{exm}

\section{On  gauge equivalence between $D^{\omega}(G)$ and $D(G')$}
Throughout this section, let $G=Z_{m_1}\times Z_{m_2}\times \cdots \times Z_{m_n}$ with $m_i \mid m_{i+1}$ for $1\leq i \leq n-1$ and  $\omega$ be a normalized $3$-cocycle with the following form:
\begin{align}\label{3.05}
	& \omega\left( g_1^{i_1} \cdots g_n^{i_n}, g_1^{j_1} \cdots g_n^{j_n}, g_1^{k_1} \cdots g_n^{k_n}\right)  = \prod_{l=1}^n \zeta_{m_l}^{a_l i_l\left[\frac{j_l+k_l}{m_l}\right]} \prod_{1 \leq s<t \leq n} \zeta_{m_s}^{a_{s t} k_s\left[\frac{i_t+j_t}{m_t}\right]} \prod_{1 \leq r<s<t \leq n} \zeta_{\left(m_r, m_s, m_t\right)}^{a_{r s t} k_r j_s i_t}.
\end{align}\par
\subsection{Categorical Morita equivalence of pointed fusion categories}
We first recall the result of categorical Morita equivalence in \cite{Morita}:
\begin{lemma}[\cite{Morita} Theorem 3.9] \label{Lem3.1}
	Let $G$ and $\hat{G}$ be ﬁnite groups, $\eta \in  Z^3(G, \CC)$ and $\hat{\eta} \in  Z^3(\hat{G}, \CC)$ be normalized $3$-cocycles. Then the tensor categories $\text{Vec}_G^{\eta}$ and $\text{Vec}_{\hat{G}}^{\hat{\eta}}$ are categorical Morita equivalent if and only if the following conditions are satisfied:\par
$\operatorname{(1)}$ There exist isomorphism of groups:
	\begin{equation}\label{3.1}
			\phi: H \underset{F}{\rtimes}K \stackrel{\cong}{\longrightarrow}G, \quad \hat{\phi}: =\widehat{H} \underset{\hat{F}}{\rtimes} K \stackrel{\cong}{\longrightarrow} \hat{G}
	\end{equation}
	for some ﬁnite group $K$ acting on the abelian normal group $H$, with $F \in Z^2 (K, H)$ and $\hat{F}\in Z^2 (K, \widehat{H})$ where $\widehat{H}:= \operatorname{Hom}(H, \CC)$.\par
	$\operatorname{(2)}$  There exists $\varepsilon: K^3 \longrightarrow \CC$ such that 
	\begin{equation}\label{3.2}
		\hat{F} \wedge F= \delta_K \varepsilon.	\end{equation}
	Here $\hat{F} \wedge F(k_1,k_2,k_3,k_4):=\hat{F}(k_1,k_2)\left( F(k_3,k_4)\right). $\par 
$\operatorname{(3)}$ The cohomology classes satisfy the equations $[ \phi^* \eta ] = [ \omega ]$ and $[ \hat{\phi}^*\hat{\eta} ] = [ \hat{\omega} ]$ with

	\begin{equation}\label{3.3}
		\begin{aligned}
			& \omega\left(\left(h_1, k_1\right),\left(h_2, k_2\right),\left(h_3, k_3\right)\right):=\hat{F}\left(k_1, k_2\right)\left(h_3\right) \varepsilon\left(k_1, k_2, k_3\right), \\
		& \hat{\omega}\left(\left(\rho_1, k_1\right),\left(\rho_2, k_2\right),\left(\rho_3, k_3\right)\right):=\varepsilon\left(k_1, k_2, k_3\right) \rho_1\left(F\left(k_2, k_3\right)\right).
		\end{aligned}
	\end{equation}

\end{lemma}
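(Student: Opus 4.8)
The plan is to prove the criterion through the dual-category description of categorical Morita equivalence, combined with the classification of module categories over a pointed fusion category. By definition, $\text{Vec}_G^{\eta}$ and $\text{Vec}_{\hat G}^{\hat\eta}$ are categorical Morita equivalent precisely when there is an indecomposable exact module category $\MM$ over $\text{Vec}_G^{\eta}$ together with a tensor equivalence $(\text{Vec}_{\hat G}^{\hat\eta})^{op}\cong(\text{Vec}_G^{\eta})^*_{\MM}$. Since $(\text{Vec}_{\hat G}^{\hat\eta})^{op}$ is again pointed, the whole statement reduces to two questions: for which $\MM$ is the dual $(\text{Vec}_G^{\eta})^*_{\MM}$ pointed, and when it is, what are its grading group and associator? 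I would therefore take as the two technical engines (i) the classification of indecomposable exact module categories over $\text{Vec}_G^{\eta}$ by pairs $(L,\psi)$, where $L\le G$ has $[\eta|_{L}]=1$ and $\psi\in C^2(L,\CC)$ satisfies $\delta_2\psi=\eta|_{L\times L\times L}$ (\cite{tensor}, Example 9.7.2), and (ii) the group-theoretical description of the dual $(\text{Vec}_G^{\eta})^*_{\MM(L,\psi)}$ as the category of $A(L,\psi)$-bimodules in $\text{Vec}_G^{\eta}$.

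For the necessity direction I would start from such an $\MM(L,\psi)$ and analyze when its bimodule category is pointed. The simple objects of $(\text{Vec}_G^{\eta})^*_{\MM(L,\psi)}$ are indexed by pairs consisting of an $(L,L)$-double coset in $G$ and an irreducible projective representation of the associated stabilizer, and the Frobenius--Perron dimension of such a simple is the product of the relative size of the double coset with the dimension of the representation. Pointedness therefore forces each double coset to reduce to a single coset and each projective representation to be one-dimensional; the standard consequence is that $L$ is abelian and normal in $G$. Setting $H:=L$ and $K:=G/H$, a set-theoretic section of $1\to H\to G\to K\to 1$ together with the conjugation action of $K$ on $H$ produces a cocycle $F\in Z^2(K,H)$ realizing $G\cong H\underset{F}{\rtimes}K$, the first isomorphism in (\ref{3.1}). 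Because $H$ is normal the coset components are indexed by $K$, and within each component the invertible bimodules form a torsor over $\widehat H=\operatorname{Hom}(H,\CC)$; hence the grading group of the dual is an extension $1\to\widehat H\to\hat G\to K\to 1$ whose class $\hat F\in Z^2(K,\widehat H)$ I would read off from $\psi$ through the Pontryagin pairing, giving $\hat G\cong\widehat H\underset{\hat F}{\rtimes}K$.

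It remains to compute the associator of the dual pointed category and to extract the compatibility conditions. Transporting the module-associativity constraint of $\MM(L,\psi)$ through the bimodule structure and feeding in $\eta$ produces an explicit $3$-cochain on $\hat G$, which I would massage into the normal form $\hat\omega$ of (\ref{3.3}); its dependence on $\varepsilon\in C^3(K,\CC)$ and on $F$ records exactly the interaction of the extension data of $G$ with the module cochain $\psi$. Requiring $\hat\omega$ to be an honest $3$-cocycle on $\hat G$ is the pentagon identity, and unwinding it isolates the relation $\hat F\wedge F=\delta_K\varepsilon$, i.e., equation (\ref{3.2}); symmetrically, the associator $\eta$ pulled back along $\phi$ must agree up to coboundary with the companion form $\omega$, giving $[\phi^*\eta]=[\omega]$, and the dual computation gives $[\hat\phi^*\hat\eta]=[\hat\omega]$, which together form condition (3).

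For the sufficiency direction I would run this computation in reverse: given the decompositions (\ref{3.1}), a class $\varepsilon$ satisfying (\ref{3.2}), and the normalizations in (3), I would reconstruct $\MM(H,\psi)$ by defining $\psi$ from $\hat F$ through the pairing, verify $\delta_2\psi=\eta|_{H\times H\times H}$ using $[\phi^*\eta]=[\omega]$, and check that the resulting dual is tensor equivalent to $(\text{Vec}_{\hat G}^{\hat\eta})^{op}$ by matching grading group, fusion, and associator against the normal form just derived. The main obstacle, where essentially all the work lies, is the explicit and bookkeeping-intensive computation of the dual associator and the verification that it equals $\hat\omega$ up to coboundary: one must simultaneously track three cohomological layers, $F\in Z^2(K,H)$, $\hat F\in Z^2(K,\widehat H)$, and $\varepsilon\in C^3(K,\CC)$, and show that the pentagon axiom for the dual category forces precisely the wedge relation (\ref{3.2}). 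As an independent consistency check one can invoke the equivalence of categorical Morita equivalence with braided equivalence of Drinfeld centers, $Z(\text{Vec}_G^{\eta})\cong Z(\text{Vec}_{\hat G}^{\hat\eta})$, i.e. $\operatorname{Rep}(D^{\eta}(G))\cong\operatorname{Rep}(D^{\hat\eta}(\hat G))$, which is the form in which the criterion is applied later in the paper.
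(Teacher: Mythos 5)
The first thing to note is that the paper does not prove this statement at all: Lemma \ref{Lem3.1} is quoted verbatim from [\cite{Morita}, Theorem 3.9] and used as a black box, so there is no internal proof to compare your attempt against. What you have written is, in outline, a faithful reconstruction of the strategy of the cited literature: classify indecomposable exact module categories over $\operatorname{Vec}_G^{\eta}$ by pairs $(L,\psi)$ with $[\eta|_L]$ trivial and $\delta_2\psi=\eta|_{L\times L\times L}$, identify the dual $(\operatorname{Vec}_G^{\eta})^*_{\mathscr{M}(L,\psi)}$ with a category of $A(L,\psi)$-bimodules (a group-theoretical category), determine when that dual is pointed, and then match grading group and associator to extract conditions (1)--(3). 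That is the right route, and your closing consistency check through Drinfeld centers is precisely the form in which the lemma is actually used later (proof of Theorem \ref{thm1.1}).

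As a proof, however, your text is a plan rather than an argument, and the two places where all the content lives are exactly the places you defer. First, in the pointedness analysis, ``$L$ abelian and normal'' is necessary but not the whole story: one-dimensionality of all the projective representations of the stabilizers requires each twisted group algebra of $L$ with the induced $2$-cocycle to be commutative, i.e.\ the associated alternating bicharacters must vanish; this extra constraint is what makes the invertible bimodules in each $K$-component a genuine $\widehat{H}$-torsor and what produces an honest class $\hat F\in Z^2(K,\widehat{H})$, and your phrase ``read off from $\psi$ through the Pontryagin pairing'' glosses over it. Second, the derivation of the normal forms (\ref{3.3}) and of the wedge relation (\ref{3.2}) from the pentagon axiom is announced, not performed; you correctly identify that $\delta(\hat\omega)=1$ forces $\hat F\wedge F=\delta_K\varepsilon$, but verifying this, and showing that the dual associator really lands in the stated normal form up to coboundary, is the bulk of the work in the source. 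So: correct skeleton, essentially the one in \cite{Morita}, with the decisive cocycle bookkeeping left open.
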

For simplicity, we will regard $\omega$ (resp. $\omega'$) as a normalized $3$-cocycle on $G$ and $H\underset{\hat{F}}{\rtimes} K$ (resp. $G'$ and $\widehat{H} \underset{\hat{F}}{\rtimes} K$ ) simultaneously in the following context.
A simple but useful application of this lemma is given as follows:
\begin{cor} \label{cor3.2}
Let $G$ be a finite abelian group. If $\operatorname{Vec}^{\omega}_G$ is categorical Morita equivalent to $\operatorname{Vec}_{G'}$ for a finite group $G'$, then
\begin{itemize}
    \item[\emph{(i)}]  The choice of $\varepsilon$ in Lemma \ref{Lem3.1} must be $\varepsilon(k_1,k_2,k_3) =1$ for all $k_1$, $k_2$, $k_3 \in K$.
 \item[\emph{(ii)}] The crossed product $G= H \underset{F}{\rtimes} K$ in Lemma $\ref{Lem3.1}$ is actually a direct product. That is, the decomposition of $G$ must be of the form $G= H\times K$ for an abelian normal subgroup $H$.
\end{itemize}
	\end{cor}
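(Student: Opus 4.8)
The plan is to feed the structural data of Lemma \ref{Lem3.1} into the single hypothesis that the target category $\operatorname{Vec}_{G'}$ is \emph{untwisted}. Applying Lemma \ref{Lem3.1} to $\operatorname{Vec}_G^{\omega}$ and $\operatorname{Vec}_{G'}$ yields finite groups $H,K$, cocycles $F\in Z^2(K,H)$ and $\hat{F}\in Z^2(K,\widehat{H})$, and a cochain $\varepsilon$ with $\hat{F}\wedge F=\delta_K\varepsilon$, for which (under the standing identification) $\omega$ and $\hat{\omega}$ are the cocycles of \eqref{3.3}. Since the $G'$-side cocycle is trivial, condition $\operatorname{(3)}$ forces $[\hat{\omega}]=1$, i.e.\ $\hat{\omega}$ is a $3$-coboundary on $G'=\widehat{H}\underset{\hat{F}}{\rtimes}K$. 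Because $G$ is abelian, $K\cong G/H$ is abelian and the conjugation action of $K$ on $H$, hence the dual action on $\widehat{H}$, is trivial; thus both $1\to H\to G\to K\to 1$ and $1\to\widehat{H}\to G'\to K\to 1$ are \emph{central} extensions, with classes $[F]\in H^2(K,H)$ and $[\hat{F}]\in H^2(K,\widehat{H})$. Everything will be extracted from $[\hat{\omega}]=1$.

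For part (ii) I would first isolate $F$ inside $\hat{\omega}$. Evaluating \eqref{3.3} with the first argument in the central subgroup $\widehat{H}$ and using that $\varepsilon$ and $F$ are normalized gives $\hat{\omega}\big((\rho,1),(\rho_2,k_2),(\rho_3,k_3)\big)=\rho\big(F(k_2,k_3)\big)$. Writing $\hat{\omega}=\delta\mu$ for a normalized $2$-cochain $\mu$ and substituting a central element in the first slot, the cross terms telescope, and one reads off that for every character $\rho\in\widehat{H}$ the pushforward $\rho\circ F$ is a $2$-coboundary on $K$. As $H$ is finite and $\CC$ is divisible, a class in $H^2(K,H)$ vanishes as soon as all of its character pushforwards vanish, so $[F]=0$. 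Since the action is trivial, a central extension with vanishing class splits, whence $G\cong H\times K$, which is (ii).

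For part (i) I would use (ii) to re-choose the set-section of $G\to K$ so that $F=0$ identically. Then condition $\operatorname{(2)}$ collapses to $\delta_K\varepsilon=\hat{F}\wedge F=1$, so $\varepsilon\in Z^3(K,\CC)$ is a genuine $3$-cocycle, and the factor $\rho_1\big(F(k_2,k_3)\big)$ in $\hat{\omega}$ becomes trivial, leaving $\hat{\omega}=\pi^*\varepsilon$, the inflation of $\varepsilon$ along $\pi\colon G'\twoheadrightarrow K$. From $[\pi^*\varepsilon]=[\hat{\omega}]=1$ I would conclude that $\varepsilon$ is cohomologically trivial and then re-normalize it to the constant function $1$; modifying $\varepsilon$ by a coboundary alters $\omega$ and $\hat{\omega}$ only within their cohomology classes, so conditions $\operatorname{(2)}$ and $\operatorname{(3)}$ are preserved. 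This yields $\varepsilon\equiv 1$.

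The main obstacle is the cohomological bookkeeping hidden in these two extraction steps. In the Lyndon--Hochschild--Serre spectral sequence of $1\to\widehat{H}\to G'\to K\to 1$, the class of $F$ lives in $E_2^{2,1}=H^2\!\big(K,H^1(\widehat{H},\CC)\big)\cong H^2(K,H)$ and the class of $\varepsilon$ in $E_2^{3,0}=H^3(K,\CC)$, and the vanishing of $[\hat{\omega}]$ a priori controls these only \emph{modulo} the transgression $d_2$ emanating from $H^2(\widehat{H},\CC)$ and the kernel of inflation $\pi^*$, which is genuinely nonzero precisely when $\hat{F}$ is a nontrivial class (so $G'$ can be nonabelian). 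Ruling out this interference, so that $[\hat{\omega}]=1$ truly forces $[F]=0$ and $[\varepsilon]=0$ rather than mere triviality up to these differentials, is where the real work lies; I expect to settle it either by a direct computation with the trivializing cochain $\mu$ restricted along a section $K\to G'$, or by playing the two conditions $[\hat{\omega}]=1$ and $[\phi^*\eta]=[\omega]$ off against each other, the latter living on the \emph{split} extension $G\cong H\times K$ where inflation from $K$ is injective.
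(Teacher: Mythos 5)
Your proposal does not prove the statement: the decisive steps for part (ii) are either false or left open. First, the reduction you rely on --- that a class in $H^2(K,H)$ vanishes as soon as all of its character pushforwards vanish --- is simply not true, and divisibility of $\mathbb{C}^*$ works against you rather than for you: for $K=H=Z_2$ the extension $Z_4$ represents a nontrivial class in $H^2(Z_2,Z_2)$, yet every pushforward lands in $H^2(Z_2,\mathbb{C}^{*})=1$ (the Schur multiplier of a cyclic group vanishes precisely because $\mathbb{C}^{*}$ is divisible). So even if you knew that $\rho\circ F$ is a $2$-coboundary on $K$ for every $\rho\in\widehat{H}$, you could not conclude $[F]=0$. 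Second, the telescoping computation that was supposed to produce this input does not go through as written: expanding $\hat\omega=\delta\mu$ at $\bigl((\rho,1),(\rho_2,k_2),(\rho_3,k_3)\bigr)$ produces the term $\mu\bigl((\rho,1),(\rho_2\rho_3\hat{F}(k_2,k_3),\,k_2k_3)\bigr)$, whose $\widehat{H}$-coordinate is twisted by $\hat{F}(k_2,k_3)$, so the cross terms do not cancel; controlling that twist is exactly the spectral-sequence interference you defer. Third, you yourself flag that interference as ``where the real work lies,'' i.e.\ the heart of the argument is not carried out. Since your part (i) is deduced from part (ii), neither part is established; note also that your argument for (i) would only show $\varepsilon$ \emph{can be re-chosen} to be $1$ after modifying by a coboundary, not that the given $\varepsilon$ \emph{must} equal $1$.

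The paper's own proof is far more elementary because it reads the hypothesis pointwise rather than cohomologically: triviality of the associator of $\operatorname{Vec}_{G'}$ is taken to mean that the cocycle $\omega'$ of \eqref{3.3} is identically $1$, i.e.\ $\varepsilon(k_1,k_2,k_3)\,\rho_1\bigl(F(k_2,k_3)\bigr)=1$ for all arguments. Then (i) follows by putting $\rho_1=1_{\widehat{H}}$, which kills the second factor and leaves $\varepsilon(k_1,k_2,k_3)=1$; and (ii) follows by comparing $\omega'\bigl((\rho,k_1),(1,k_2),(1,k_3)\bigr)$ with $\omega'\bigl((1,k_1),(1,k_2),(1,k_3)\bigr)$: their ratio is $\rho\bigl(F(k_2,k_3)\bigr)$, so this equals $1$ for every character $\rho$ and all $k_2,k_3$, and since characters of the finite abelian group $H$ separate points, $F\equiv 1_H$ and the crossed product is a direct product. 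Pointwise separation of points by characters is the correct substitute for your false $H^2$-detection lemma. To be fair, your cohomological reading of condition (3) in Lemma \ref{Lem3.1} is the literal one, and it exposes a subtlety that the paper's proof glosses over; but under that reading the problem becomes genuinely harder, and your proposal does not resolve it.
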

\begin{proof}
	 Suppose $\operatorname{Vec}^{\omega}_G$ is categorical Morita equivalence to $\operatorname{Vec}_{G'}$. By Lemma \ref{Lem3.1}, There exists isomorphism of groups:
	 $H \underset{F}{\rtimes}K \stackrel{\cong}{\longrightarrow}G, \ \widehat{H} \underset{\hat{F}}{\rtimes} K \stackrel{\cong}{\longrightarrow} \hat{G}
	 $ for abelian normal subgroup $H$ of $G$. By assumption, the normalized $3$-cocycle $\omega'$ of $G'$ is trivial. That is $$ {\omega'}\left(\left( \rho_1,k_1\right),\left(\rho_2,k_2\right),\left( \rho_3,k_3\right)\right)=\varepsilon\left(k_1, k_2, k_3\right) \rho_1\left(F\left(k_2, k_3\right)\right)\equiv 1$$for all $k_1, k_2, k_3 \in K$ and $\rho_1, \rho_2, \rho_3 \in \widehat{H}$.\par 
	We first assume $\varepsilon$ is nontrivial, then there will exist $k', k'', k''' \in K$ such that $\varepsilon(k',k'',k''') \neq 1$, then
	$${\omega'\left( \left(  1_{\widehat{H}},k'\right) ,\left( \rho_2,k''\right) ,\left( \rho_3,k''',\right) \right) }=\varepsilon(k',k'',k''')1_{\widehat{H}}\left( F(k_1,k_2)\right)=\varepsilon(k',k'',k''')\neq 1.$$
	This implies $\omega'$ will never be identically equal to $1$, which is a contradiction.
	\par Suppose the crossed product is not a direct product. Then $F \in Z^2(K,H)$ is nontrivial, and there will exist $k'$, $k'' \in K$ such that $F(k',k'') \neq 1_H$. So we can choose a character $\rho \in \hat{H}$ such that $\rho\left( F(k',k'')\right)  \neq 1$ and  consider the ratio of
$$\frac{\omega'\left( \left( \rho,k_1\right) ,\left( 1_{\hat{H}},k_2\right) ,\left( 1_{\hat{H}},k_3\right) \right) }{\omega'\left( \left(  1_{\hat{H}},k_1\right) ,\left( 1_{\hat{H}},k_2\right),\left( 1_{\hat{H}},k_3\right)\right) }=\rho\left( F(k_1,k_2)\right) \neq 1.$$
Thus one of the values of $\omega'$ can't be one. This  leads to a contradiction as well.
\end{proof}
\subsection{The first main result}
Keep the notation above, we will give a sufficient  condition of categorical Morita equivalence between $\operatorname{Vec}^{\omega}_G$ and $\operatorname{Vec}_{G'}$ in this subsection. Now let $G=Z_{m_1}\times Z_{m_2}\times \cdots \times Z_{m_n}$ with $m_i \mid m_{i+1}$ for $1\leq i \leq n-1$. The $3$-cocycle $\omega $ as in (\ref{3.05}). $$\underline{a}=(a_1,a_2,...,a_l,...,a_n,a_{12},a_{13},...,a_{st},...,a_{n-1,n},a_{123},...,a_{rst},...a_{n-2,n-1,n}) \in \mathscr{A}$$ where $0 \leq a_l <m_l$, $0\leq a_{st} <(m_s,m_t)$, $0\leq a_{rst} <(m_r,m_s,m_t)$. For a fixed $\underline{a} \in \mathscr{A}$,
	define the following sets:
	\begin{align*}
		&A_1:=\left\lbrace i | a_{ij}\neq 0, 1\leq i <j \leq n \right\rbrace, \qquad A_2:=\left\lbrace i | a_{ijk}\neq 0, 1\leq i <j<k \leq n \right\rbrace ,\\
		&B_1:= \left\lbrace j | a_{ij}\neq 0, 1\leq i <j \leq n \right\rbrace, \qquad
		B_2:=\left\lbrace j,k | a_{ijk}\neq 0, 1\leq i <j<k \leq n \right\rbrace.
	\end{align*}
	Let $A=A_1 \cup A_2$, $B=B_1 \cup B_2$.
\begin{thm}\label{thm1.2}
	Let $G$ be a finite abelian group and $\omega$ is a normalized $3$-cocycle on $G$ as in (\ref{(1.3)}). If 
 \begin{itemize}
     \item[\emph{(i)}] $a_i=0$ for all $ 1\leq i \leq n$,
     \item[\emph{(ii)}] $A \cap B =\emptyset$.
 \end{itemize}
Then $\operatorname{Vec}^{\omega}_G$ is categorical Morita equivalent to $\operatorname{Vec}_{G'}$ for a finite group $G'.$ 
\end{thm}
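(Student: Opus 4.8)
The plan is to apply Lemma \ref{Lem3.1} with the target category $\operatorname{Vec}_{G'}$ carrying the trivial $3$-cocycle, using Corollary \ref{cor3.2} as a guide: I will exhibit $G$ as a genuine direct product $G=H\times K$, take $F\equiv 1$ and $\varepsilon\equiv 1$, and read off a suitable $\hat F\in Z^2(K,\widehat H)$ so that the model $3$-cocycle $\omega$ of \eqref{3.3} coincides with the given cocycle on the nose. The point is that once $\omega$ is shown to have the right ``shape'', every hypothesis of Lemma \ref{Lem3.1} becomes either automatic or a trivial check.

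First I would split the index set. By hypothesis (i) the cocycle has no ``diagonal'' factors $\zeta_{m_l}^{a_l i_l[\cdots]}$, so only the mixed factors $\zeta_{m_s}^{a_{st}k_s[\frac{i_t+j_t}{m_t}]}$ and $\zeta^{a_{rst}k_rj_si_t}$ survive. In each of these the \emph{source} index ($s$ in $a_{st}$, $r$ in $a_{rst}$) enters only through the exponents $k_s,k_r$ of the third argument, while the \emph{target} indices enter only through the first two arguments (via $i_t,j_t$ and $j_s,i_t$). Hypothesis (ii), $A\cap B=\emptyset$, guarantees that no index is simultaneously a source and a target; writing $C=\{1,\dots,n\}\setminus(A\cup B)$ for the indices occurring in neither, I can therefore set $H:=\prod_{l\in A\cup C}Z_{m_l}$ and $K:=\prod_{l\in B}Z_{m_l}$. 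Since $A,B,C$ partition $\{1,\dots,n\}$, this realizes $G=H\times K$ as an honest internal direct product, with the (automatically abelian, normal) factor $H$ containing every source and $K$ containing every target.

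Next I would read off $\hat F$. Writing each element $g=(h,k)$ according to this splitting, the previous paragraph shows that $\omega$ depends only on the $K$-part of the first two arguments and the $H$-part of the third, so $\omega$ literally takes the form $\omega((h_1,k_1),(h_2,k_2),(h_3,k_3))=\hat F(k_1,k_2)(h_3)$, where for fixed $k_1,k_2$ the assignment $h_3\mapsto\omega(\cdots)$ is a product of roots of unity linear in the components of $h_3$. One checks it is well defined modulo the relevant orders (using $\zeta_{m_s}^{m_s}=1$ and the divisibility $(m_r,m_s,m_t)\mid m_r$), hence a genuine character of $H$, giving $\hat F:K\times K\to\widehat H$. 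The key simplification is that the $2$-cocycle identity for $\hat F$ need not be verified by hand: substituting the shape $\hat F(k_1,k_2)(h_3)$ into $\delta\omega=1$ and cancelling the common factor collapses the identity exactly to $\hat F(k_2,k_3)\,\hat F(k_1,k_2k_3)=\hat F(k_1k_2,k_3)\,\hat F(k_1,k_2)$, i.e. $\hat F\in Z^2(K,\widehat H)$.

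Finally I would assemble the data of Lemma \ref{Lem3.1}: take $\phi$ to be the regrouping isomorphism $H\times K\xrightarrow{\cong}G$ and $\hat\phi=\mathrm{id}$ onto $G':=\widehat H\rtimes_{\hat F}K$, with $F\equiv 1$ and $\varepsilon\equiv 1$. Condition (1) holds since $H\rtimes_F K=H\times K$; condition (2) is immediate because $\hat F\wedge F=1=\delta_K\varepsilon$ once $F\equiv 1$ and $\varepsilon\equiv 1$; and condition (3) holds because the first model cocycle equals $\omega$ exactly, so $[\phi^*\omega]=[\omega]$, while the second model cocycle $\hat\omega((\rho_1,k_1),(\rho_2,k_2),(\rho_3,k_3))=\varepsilon(k_1,k_2,k_3)\rho_1(F(k_2,k_3))$ reduces to $1$, matching the trivial cocycle on $G'$. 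Lemma \ref{Lem3.1} then yields $\operatorname{Vec}_G^{\omega}\simeq\operatorname{Vec}_{G'}$. I expect the only genuine effort to lie in the bookkeeping of the two middle paragraphs—pinning down exactly which variables each factor of $\omega$ depends on—and this is precisely where the hypotheses are indispensable: (i) removes the diagonal terms that would force one index into both $H$ and $K$, and (ii) prevents any index from being both a source and a target, which is what makes the splitting $G=H\times K$ compatible with the required shape $\hat F(k_1,k_2)(h_3)$.
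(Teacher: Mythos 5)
Your proposal is correct and is essentially the paper's own proof: the paper likewise applies Lemma \ref{Lem3.1} with $F\equiv 1$ and $\varepsilon\equiv 1$, splits $G=H\times K$ with the source indices $A$ in $H$ and the target indices $B$ in $K$ (the paper places the neutral indices $C$ in $K$ rather than in $H$, but either choice works since $\omega$ does not involve them), and defines $\hat F\in Z^2(K,\widehat H)$ by exactly the character-valued formula you read off from $\omega$, so that the first model cocycle of \eqref{3.3} equals $\omega$ on the nose and the second is identically $1$. Your derivation of the $2$-cocycle identity for $\hat F$ from the $3$-cocycle identity of $\omega$ together with multiplicativity in $h_3$ is a slightly cleaner justification than the paper's unelaborated ``direct computation,'' but it is the same construction and the same key lemma.
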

\begin{proof}
Let $a_i=0$ for all $ 1\leq i \leq n$ and $A \cap B =\emptyset$. Denote $I=\left\lbrace 1,2,\cdots,n\right\rbrace $. Clearly $A,B \subset I$. Now take $H=\underset{i\in A}{\prod} Z_{m_i}=\underset{i\in A}{\prod} \langle g_i \rangle$, and  $K=\underset{j \in I \setminus A}{\prod}Z_{m_j}=\underset{j \in I \setminus A}{\prod}\langle g_j \rangle$, then $G \cong H \times K$. Define 
$$\hat{F}\left( \underset{m \in I \setminus A }{\prod}g_m^{i_m},\underset{m \in I \setminus A }{\prod}g_m^{j_m}\right) = \underset{p\in A_1, q \in B_1}{\prod_{p<q
}}(\chi_p)^{a_{pq}[\frac{i_q+j_q}{m_q}]}\underset{r \in A_2, s,t \in B_2}{\prod_{r<s<t}}(\chi_r)^{\frac{a_{rst}m_r}{(m_r,m_s,m_t)}j_si_t}$$
where $\chi_p \in \widehat{Z_{m_p}}$ is primitive. $\hat{F}$  lies in $Z^2(K,\hat{H})$ by direct computation.
We are now going to show  $\operatorname{Vec}^{\omega}_G$ is categorical Morita equivalent to $\operatorname{Vec}_{H\underset{\hat{F}}{\rtimes}K}$ by Lemma \ref{Lem3.1}: \par
Equation (\ref{3.1}) has been done. If we set $\varepsilon: K^3\rightarrow k^{\times}$ being identical to 1, then (\ref{3.2}) is satisfied since $\hat{F} \wedge F(k_1,k_2,k_3,k_4)=\hat{F}(k_1,k_2)\left( F(k_3,k_4)\right) =1=\delta_K\varepsilon$. Note
\begin{align*}
	\omega\left( g_1^{i_1} \cdots g_n^{i_n}, g_1^{j_1} \cdots g_n^{j_n}, g_1^{k_1} \cdots g_n^{k_n}\right)  &= \prod_{1 \leq p<q \leq n} \zeta_{m_p}^{a_{p q} k_p\left[\frac{i_q+j_q}{m_q}\right]} \prod_{1 \leq r<s<t \leq n} \zeta_{\left(m_r, m_s, m_t\right)}^{a_{r s t} k_r j_s i_t}\\
	&=\underset{p\in A_1, q \in B_1}{\prod_{p<q
	}}\zeta_{m_p}^{a_{pq}k_p[\frac{i_q+j_q}{m_q}]}\underset{r \in A_2, s,t \in B_2}{\prod_{r<s<t}}\zeta_{(m_r,m_s,m_t)}^{a_{rst}k_rj_si_t}
\end{align*}
since $a_{pq}=0$ if $p \notin A_1$ or $q \notin B_1$  and $a_{rst}=0$ if $r \notin A_2$ or $s \notin B_2$ or $t \notin B_2$.
On the other hand 
\begin{align*}
	\hat{F}\left( \underset{n \in I \setminus A }{\prod}g_n^{i_n},\underset{n \in I \setminus A }{\prod}g_n^{j_n}\right) (\prod_{m \in A}g_m^{k_m})&=\underset{p\in A_1, q \in B_1}{\prod_{p<q
	}}\zeta_{m_p}^{a_{pq}k_p[\frac{i_q+j_q}{m_q}]}\underset{r \in A_2, s,t \in B_2}{\prod_{r<s<t}}\zeta_{m_r}^{a_{rst}\frac{m_rk_rj_si_t}{(m_r,m_s,m_t)}}\\
&=\underset{p\in A_1, q \in B_1}{\prod_{p<q
}}\zeta_{m_p}^{a_{pq}k_p[\frac{i_q+j_q}{m_q}]}\underset{r \in A_2, s,t \in B_2}{\prod_{r<s<t}}\zeta_{(m_r,m_s,m_t)}^{a_{rst}k_rj_si_t}.
\end{align*}
Hence \begin{align*}
	\omega\left( g_1^{i_1} \cdots g_n^{i_n}, g_1^{j_1} \cdots g_n^{j_n}, g_1^{k_1} \cdots g_n^{k_n}\right)&=\omega( (\prod_{m\in A}g_m^{i_m},\prod_{n\in I \setminus A} g_n^{i_n}),(\prod_{m\in A}g_m^{j_m},\prod_{n\in I \setminus A} g_n^{j_n}),(\prod_{m\in A}g_m^{k_m},\prod_{n\in I \setminus A} g_n^{k_n})) \\
	&=\underset{p\in A_1, q \in B_1}{\prod_{p<q
	}}\zeta_{m_p}^{a_{pq}k_p[\frac{i_q+j_q}{m_q}]}\underset{r \in A_2, s,t \in B_2}{\prod_{r<s<t}}\zeta_{(m_r,m_s,m_t)}^{a_{rst}k_rj_si_t}\\
&=\hat{F}(\prod_{n\in I \setminus A}g_n^{i_n},\prod_{n\in I \setminus A} g_n^{j_n})(\prod_{m\in A}g_m^{k_m}).
\end{align*}
Thus the first equation of (\ref{3.3}) has been verified.\par 
Since $G \cong H \times K = H \underset{F}{\rtimes} K$ where $F(k_1,k_2)=1_H$ for all $k_1,k_2 \in  K$. Then $\rho(F(k_1,k_2))=1$ for all $\rho \in \hat{H}$ and $k_1, k_2 \in K$. Thus $$
{\omega'}\left(\left( \rho_1,k_1\right),\left(\rho_2,k_2\right),\left( \rho_3,k_3\right)\right)=1=\rho_1(F(k_2,k_3)).$$
We have verified all conditions in Lemma \ref{Lem3.1}. Hence $\operatorname{Vec}^{\omega}_G$ is categorical Morita equivalent to $\operatorname{Vec}_{\hat{H}\underset{\hat{F}}{\rtimes}K}$ if $a_i=0$ for all $ 1\leq i \leq n$ and $A \cap B =\emptyset$.
\end{proof}
%\begin{lemma}[\cite{center}]\label{lem3.7}
	%Let $G$ be a finite group and $\omega$ is a $3$-cocycle on $G$, then $\mathcal{Z}(\operatorname{Vec}_G^{\omega})$ is braided fusion equivalent to $\operatorname{Rep}(D^{\omega}(G))$ 
%\end{lemma}
%\begin{lemma}\label{lem3.8}
	%Two fusion categories are categorical Morita equivalent if and only if their centers are braided fusion equivalent.
%\end{lemma}
%\begin{lemma}[\cite{gauge} Theorem2.2]\label{lem3.9}
	%Let $H$ and $H'$ be two finite-dimensional quasi-Hopf algebras.Then $\operatorname{Rep}(H)$ is tensor equivalent to $\operatorname{Rep}(H')$ if and only if $H$ is gauge equivalent to $H'$.
%\end{lemma}
This theorem implies Theorem \ref{thm1.1} directly.
\begin{proof}[Proof of Theorem \ref{thm1.1}]
According to Theorem \ref{thm1.2}, if $a_i = 0$ for all $1\leq i \leq n$ and $A \cap B = \emptyset$, then $\operatorname{Vec}_G^{\omega}$ is categorical Morita equivalent to $\operatorname{Vec}_{G'}$ for some finite group $G'$. By Theorem 3.1 in \cite{tensor}, the centers of these two fusion categories are braided equivalent. It is known that the center is equivalent to the representation category of the corresponding Drinfeld double (see for example \cite{center}). That is, $\operatorname{Rep}(D^{\omega}(G))$ is braided tensor equivalent to $\operatorname{Rep}(D(G'))$. Hence 
 $D^{\omega}(G)$ will be  gauge equivalent to $D(G')$ by Theorem 2.2 in \cite{gauge}.
\end{proof}
A natural question is when $G'$ can be a finite abelian group in the theorem above. Here is the answer.
\begin{cor}
	\label{cor3.6}
		Let $G$ be a finite abelian group and $\omega$  a normalized $3$-cocycle on $G$ as above.  Then $\operatorname{Vec}^{\omega}_G$ is categorical Morita equivalent to $\operatorname{Vec}_{G'}$ for a finite abelian group $G'$ if  \par 
$\operatorname{(1)}$ $a_i=0$ for all $ 1\leq i \leq n$ and  $a_{rst}=0$ for all $1 \leq r < s < t\leq n$,\par 
$\operatorname{(2)}$  $A_1 \cap B_1 =\emptyset$.
\end{cor}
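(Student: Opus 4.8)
The plan is to bootstrap off of Theorem \ref{thm1.2} and then analyze when the group $G'$ produced there can be chosen abelian. First I would observe that conditions (1) and (2) imply the hypotheses of Theorem \ref{thm1.2}: the assumption $a_{rst}=0$ for all $r<s<t$ forces $A_2=B_2=\emptyset$, so that $A=A_1$ and $B=B_1$; combined with $a_i=0$ and $A_1\cap B_1=\emptyset$ this is exactly hypotheses (i) and (ii) of Theorem \ref{thm1.2}. Hence $\operatorname{Vec}^{\omega}_G$ is categorical Morita equivalent to $\operatorname{Vec}_{G'}$, where, following the construction in the proof of that theorem, $G'=\hat{H}\underset{\hat{F}}{\rtimes}K$ with $H=\prod_{i\in A}Z_{m_i}$, $K=\prod_{j\in I\setminus A}Z_{m_j}$, and $\hat{F}\in Z^2(K,\hat{H})$. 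It remains only to show that this particular $G'$ is abelian.

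Next I would pin down the group law on $G'$. Since $G\cong H\times K$ is a \emph{direct} product, the action of $K$ on $H$ is trivial, and therefore its contragredient action on $\hat{H}$ is trivial as well. Consequently $G'=\hat{H}\underset{\hat{F}}{\rtimes}K$ is a central extension of $K$ by $\hat{H}$, with multiplication
\begin{align*}
(\rho_1,k_1)(\rho_2,k_2)=(\rho_1\rho_2\,\hat{F}(k_1,k_2),\,k_1k_2).
\end{align*}
As $\hat{H}$ and $K$ are both abelian, comparing $(\rho_1,k_1)(\rho_2,k_2)$ with $(\rho_2,k_2)(\rho_1,k_1)$ shows that $G'$ is abelian if and only if $\hat{F}$ is a symmetric cocycle, i.e. $\hat{F}(k_1,k_2)=\hat{F}(k_2,k_1)$ for all $k_1,k_2\in K$.

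The final step is the explicit computation of $\hat{F}$ under hypothesis (1). In the defining formula for $\hat{F}$ from the proof of Theorem \ref{thm1.2}, the condition $a_{rst}=0$ annihilates the triple product $\prod_{r<s<t}(\chi_r)^{\frac{a_{rst}m_r}{(m_r,m_s,m_t)}j_si_t}$, which is exactly the antisymmetric (bilinear) part, leaving
\begin{align*}
\hat{F}\Big(\prod_{m}g_m^{i_m},\prod_{m}g_m^{j_m}\Big)=\prod_{\substack{p<q\\ p\in A_1,\,q\in B_1}}(\chi_p)^{a_{pq}\left[\frac{i_q+j_q}{m_q}\right]}.
\end{align*}
Because the carry term $\left[\frac{i_q+j_q}{m_q}\right]$ is symmetric under $i_q\leftrightarrow j_q$, swapping the two arguments leaves $\hat{F}$ unchanged. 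Thus $\hat{F}$ is symmetric, and by the criterion above $G'$ is abelian, as desired.

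I expect the main difficulty to be bookkeeping rather than conceptual. One must confirm that the contragredient $K$-action on $\hat{H}$ really is trivial, so that the ``abelian $\Leftrightarrow$ symmetric cocycle'' criterion applies, and one must correctly isolate the triple-term as the sole source of antisymmetry in $\hat{F}$. It is precisely this term, which carries the nonabelian information, that hypothesis (1) is designed to kill, whereas hypothesis (2) is what guarantees $B_1\subseteq I\setminus A$ and so makes the Theorem \ref{thm1.2} construction available in the first place.
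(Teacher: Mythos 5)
Your proposal is correct and follows essentially the same route as the paper: invoke Theorem \ref{thm1.2} to obtain $G'=\hat{H}\underset{\hat{F}}{\rtimes}K$, reduce abelianness of this central extension to symmetry of the $2$-cocycle $\hat{F}$, and observe that hypothesis (1) kills the triple-index term, leaving only the manifestly symmetric carry terms $\left[\frac{i_q+j_q}{m_q}\right]$. The only difference is cosmetic: you spell out explicitly that (1) and (2) imply the hypotheses of Theorem \ref{thm1.2} (via $A_2=B_2=\emptyset$), a step the paper leaves implicit, and you prove only the sufficiency the corollary asserts, while the paper's computation actually records the full equivalence with $a_{rst}=0$.
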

\begin{proof}
	We first assume the condition (i) and (ii) in Theorem \ref{thm1.2} hold. Thus there's a finite group $G'$ such that $\operatorname{Vec}^{\omega}_G$ is categorical Morita equivalence to $\operatorname{Vec}_{G'}$. By construction, $ G' \cong (\underset{i\in A}{\prod} Z_{m_i} ) \underset{\hat{F}}{\rtimes}(\underset{j \in I \setminus A}{\prod}Z_{m_j})$ where $\hat{F}$ is defined to be
	$$\hat{F}\left( \underset{m \in I \setminus A }{\prod}g_m^{i_m},\underset{m \in I \setminus A }{\prod}g_m^{j_m}\right) = \underset{p\in A_1, q \in B_1}{\prod_{p<q
	}}(\chi_p)^{a_{pq}[\frac{i_q+j_q}{m_q}]}\underset{r \in A_2, s,t \in B_2}{\prod_{r<s<t}}(\chi_r)^{\frac{a_{rst}m_r}{(m_r,m_s,m_t)}j_si_t}.$$

\begin{align*}
\text{Then} \ G' \ &\text{is abelian} 	 \Leftrightarrow  (\underset{m \in A}{\prod}g_m^{i_m},\underset{n \in I \setminus A}{\prod}g_n^{i_n})\cdot  (\underset{m \in A}{\prod}g_m^{j_m},\underset{n \in I \setminus A}{\prod}g_n^{j_n})=  (\underset{m \in A}{\prod}g_m^{j_m},\underset{n \in I \setminus A}{\prod}g_n^{j_n}) \cdot (\underset{m \in A}{\prod}g_m^{i_m},\underset{n \in I \setminus A}{\prod}g_n^{i_n})\\
&\Leftrightarrow  \hat{F}\left( \underset{n \in I \setminus A }{\prod}g_n^{i_n},\underset{n \in I \setminus A }{\prod}g_n^{j_n}\right)=\hat{F}\left( \underset{n \in I \setminus A }{\prod}g_n^{j_n},\underset{n \in I \setminus A }{\prod}g_n^{i_n}\right) \\
&\Leftrightarrow \underset{p\in A_1, q \in B_1}{\prod_{p<q
}}(\chi_p)^{a_{pq}[\frac{i_q+j_q}{m_q}]}\underset{r \in A_2, s,t \in B_2}{\prod_{r<s<t}}(\chi_r)^{\frac{a_{rst}m_rj_si_t}{(m_r,m_s,m_t)}}=\underset{p\in A_1, q \in B_1}{\prod_{p<q
}}(\chi_p)^{a_{pq}[\frac{j_q+i_q}{m_q}]}\underset{r \in A_2, s,t \in B_2}{\prod_{r<s<t}}(\chi_r)^{\frac{a_{rst}m_ri_sj_t}{(m_r,m_s,m_t)}}\\
&\Leftrightarrow A_2, B_2 =\emptyset. 
\end{align*} 
This is equivalent to $a_{rst}=0$ for all $1 \leq r < s< t\leq n$. Thus if (1) and (2) hold, then $G'$ is abelian.
\end{proof}
If $G$ is a cyclic group, then conditions (i),(ii) in Theorem \ref{thm1.2} are also necessary. In fact, for any cyclic group $G \cong Z_m=<g|g^m=1>$ with a normalized $3$-cocycle $\omega_a$ given by $\omega_a(g^i,g^j,g^k)=\zeta_m^{ai[\frac{j+k}{m}]}$, where $0 \leq a,i,j,k<m$, we have (noting that the condition (ii) is always satisfied now)
\begin{prop} \label{Prop3.5}
The fusion category $\operatorname{Vec}_{G}^{\omega_a}$  is categorical Morita equivalent to $\operatorname{Vec}_{G'}$ for a finite group $G'$ if and only if $a=0$.
\end{prop}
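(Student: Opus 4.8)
The plan is to prove both implications, with the reverse one essentially immediate and the forward one resting on a short cohomological computation. For the ``if'' direction, when $a=0$ the cocycle $\omega_0$ is trivial, so $\operatorname{Vec}_G^{\omega_0}=\operatorname{Vec}_G$ is trivially categorical Morita equivalent to $\operatorname{Vec}_G$ itself; equivalently this is the content of Theorem \ref{thm1.2}, noting that for a cyclic group there are no indices $s<t$ or $r<s<t$, so $A=B=\emptyset$, condition (ii) holds vacuously, and condition (i) reads exactly $a=0$.

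For the ``only if'' direction, I would assume $\operatorname{Vec}_G^{\omega_a}$ is categorical Morita equivalent to $\operatorname{Vec}_{G'}$ and feed this into Corollary \ref{cor3.2}. This forces $\varepsilon\equiv 1$ and a genuine \emph{direct} product decomposition $G=H\times K$ with $H$ an abelian subgroup, so that by Lemma \ref{Lem3.1} the cocycle $\omega_a$ pulls back, via the isomorphism $\phi$, to a cocycle cohomologous to
$$\omega\big((h_1,k_1),(h_2,k_2),(h_3,k_3)\big)=\hat{F}(k_1,k_2)(h_3),\qquad \hat{F}\in Z^2(K,\widehat{H}),$$
where, since $G$ is abelian, the action of $K$ on $H$ (and dually on $\widehat{H}$) is trivial. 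Because $G\cong Z_m$ is cyclic, any such decomposition $G=H\times K$ forces $H\cong Z_d$ and $K\cong Z_e$ with $de=m$ and $\gcd(d,e)=1$, since $Z_d\times Z_e$ is cyclic precisely when the orders are coprime.

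The heart of the matter is then to show $[\omega]=0$. Coprimality gives $H^2(K,\widehat{H})\cong H^2(Z_e,Z_d)\cong \widehat{H}/e\widehat{H}\cong Z_d/eZ_d=0$, since multiplication by $e$ is invertible on $Z_d$. Hence $\hat{F}=\delta_K\mu$ for some normalized $1$-cochain $\mu\colon K\to\widehat{H}$. I would then write down the explicit $2$-cochain $\nu$ on $G$ given by $\nu\big((h_1,k_1),(h_2,k_2)\big):=\mu(k_1)(h_2)$ and verify by direct substitution that $\delta_G\nu=\omega$: the two terms containing $h_2$ cancel and one is left with precisely $\delta_K\mu(k_1,k_2)(h_3)=\hat{F}(k_1,k_2)(h_3)$. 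Thus $\omega$ is a coboundary, so $[\omega]=0$, and since $\phi^*$ is an isomorphism on cohomology we obtain $[\omega_a]=0$. As the $\omega_a$, $0\le a<m$, form a complete set of representatives of the normalized $3$-cocycles on $Z_m$ up to cohomology (the $n=1$ case of \eqref{2.1}), $[\omega_a]=0$ forces $a=0$.

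The only genuinely nontrivial step I expect is the vanishing $[\omega]=0$, i.e. the two ingredients $H^2(Z_e,Z_d)=0$ and the explicit coboundary $\nu$; everything else is a direct application of Corollary \ref{cor3.2} and Lemma \ref{Lem3.1}. Conceptually this reflects the Künneth principle that, for a direct product of groups of coprime order, the ``mixed'' part of $H^3$ vanishes, so no cohomologically nontrivial $3$-cocycle on a cyclic group can ever arise from the construction of Lemma \ref{Lem3.1}.
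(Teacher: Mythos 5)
Your proof is correct and takes essentially the same route as the paper's: sufficiency from Theorem \ref{thm1.2} (condition (ii) being vacuous for cyclic $G$), and necessity via Corollary \ref{cor3.2}, the forced coprime direct-product decomposition $G\cong H\times K$, the vanishing of $H^2(K,\widehat{H})$, and formula (\ref{3.3}). The only difference is that you spell out what the paper leaves implicit in the phrase ``thus $\omega_a$ should be $1$'' — namely the explicit $2$-cochain $\nu\bigl((h_1,k_1),(h_2,k_2)\bigr)=\mu(k_1)(h_2)$ with $\delta_G\nu=\omega$, and the appeal to completeness of the representatives $\omega_a$ to conclude $a=0$ — which is added rigor rather than a new idea.
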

\begin{proof}
	The sufficiency follows from Theorem \ref{thm1.2}. Now 
	suppose that $\operatorname{Vec}_{G}^{\omega_{a}}$  is categorical Morita equivalent to $\operatorname{Vec}_{G'}$ for a finite group $G'$. By Corollary \ref{cor3.2}, $G$ must be direct product of two subgroups, like $G\cong H\times K$ and the function $\varepsilon$ should be $1$. Since $G$ is cyclic, then $H$ and  $K$ must be  cyclic subgroups. Moreover, $|H|$ should be prime to $|K|$, hence $H^2(K,\hat{H})=\left\lbrace 1\right\rbrace$. Thus $\omega_a$ should be $1$ by formula (\ref{3.3}). That is, $a=0.$
\end{proof}
But in general the conditions (i) and (ii) in Theorem \ref{thm1.2} both are not necessary as the following example shows.
\begin{exm}\rm
	Let $G \cong Z_2 \times Z_2\times Z_2= \langle g_1 \rangle \times \langle g_2 \rangle \times \left\langle g_3\right\rangle $ and 
$$\omega\left( g_1^{i_1}  g_2^{i_2}g_3^{i_3}, g_1^{j_1}  g_2^{j_2}g_3^{j_3}, g_1^{k_1}  g_2^{k_2}g_3^{k_3}\right)  =(-1)^{i_2[\frac{j_2+k_2}{2}]}(-1)^{k_1[\frac{i_2+j_2}{2}]}(-1)^{k_1[\frac{i_3+j_3}{2}]}(-1)^{k_2[\frac{i_3+j_3}{2}]}.$$
In this case, $\underline{a}=(0,1,0,1,1,1,0)\in \mathscr{A}$, $a_2 \neq 0$ and  $A_1 \cap B_1 \neq 0$.\par 
Take $H \cong Z_2=\langle g_1 \rangle$ and $K \cong Z_2\times Z_2=\langle g_1g_2\rangle \times \left\langle g_3\right\rangle $. Obviously, $G \cong H \times K$. Define 
$$\hat{F}: K \times K \rightarrow \hat{H}, \  \ \ \hat{F}((g_1g_2)^{i_2},g_3^{i_3},(g_1g_2)^{j_2},g_3^{j_3})= \chi_1^{[\frac{i_2+j_2}{2}]}\chi_1^{[\frac{i_3+j_3}{2}]},$$
where $\chi_1$ generates $\hat{H}$. Let $G'=\hat{H} \underset{\hat{F}}{\rtimes} K$, we are going to show $\VG$ is categorical Morita equivalent to $\operatorname{Vec}_{G'}$.\par 
Define $\varepsilon: K^3 \rightarrow \mathbb{C}^*$ as $\varepsilon \equiv 1$, then equation (\ref{3.2}) holds. 
Note that 
\begin{align*}
	\omega(g_1^{(i_1-i_2)'}&((g_1g_2)^{i_2},g_3^{i_3}),g_1^{(j_1-j_2)'}((g_1g_2)^{j_2}g_3^{j_3}),g_1^{(k_1-k_2)'}((g_1g_2)^{k_2}g_3^{k_3}))\\&=\omega\left( g_1^{i_1}  g_2^{i_2}g_3^{i_3}, g_1^{j_1}  g_2^{j_2}g_3^{j_3}, g_1^{k_1}  g_2^{k_2}g_3^{k_3}\right)	=(-1)^{i_2[\frac{j_2+k_2}{2}]}(-1)^{k_1[\frac{i_2+j_2}{2}]}(-1)^{k_1[\frac{i_3+j_3}{2}]}(-1)^{k_2[\frac{i_3+j_3}{2}]},
\end{align*}
 and 
$$\widehat{F}\left( ((g_1g_2)^{i_2},g_3^{i_3}),((g_1g_2)^{j_2},g_3^{j_3})\right) (g_1^{(k_1-k_2)'}) =(-1)^{(k_1-k_2)'[\frac{i_2+j_2}{2}]}(-1)^{(k_1-k_2)'[\frac{i_3+j_3}{2}]}$$
for $0 \leq i_1,i_2,j_1,j_2,k_1,k_2 \leq 1$. Actually, 
\begin{align*}
	&\frac{(-1)^{i_2[\frac{j_2+k_2}{2}]}(-1)^{k_1[\frac{i_2+j_2}{2}]}(-1)^{k_1[\frac{i_3+j_3}{2}]}(-1)^{k_2[\frac{i_3+j_3}{2}]}}{(-1)^{(k_1-k_2)'[\frac{i_2+j_2}{2}]}(-1)^{(k_1-k_2)'[\frac{i_3+j_3}{2}]}} \\
	&=\frac{(-1)^{i_2[\frac{j_2+k_2}{2}]}(-1)^{k_1[\frac{i_2+j_2}{2}]}(-1)^{k_1[\frac{i_3+j_3}{2}]}(-1)^{k_2[\frac{i_3+j_3}{2}]}}{(-1)^{k_1[\frac{i_2+j_2}{2}]}{(-1)^{-k_2[\frac{i_2+j_2}{2}]}(-1)^{k_1[\frac{i_3+j_3}{2}]}(-1)^{-k_2[\frac{i_3+j_3}{2}]}} }\\
	&= (-1)^{k_2[\frac{i_2+j_2}{2}]}\cdot(-1)^{i_2[\frac{j_2+k_2}{2}]} =1.
\end{align*}
Thus $\omega\left( g_1^{i_1}  g_2^{i_2}g_3^{i_3}, g_1^{j_1}  g_2^{j_2}g_3^{j_3}, g_1^{k_1}  g_2^{k_2}g_3^{k_3}\right)=\widehat{F}\left( ((g_1g_2)^{i_2},g_3^{i_3}),((g_1g_2)^{j_2},g_3^{j_3})\right) (g_1^{(k_1-k_2)'})$ and the first equation in (\ref{3.3}) holds. Obviously, if we define 
the $3$-cocycle $\omega'$ on $G'$ as 
$$\omega'\equiv1.$$
Then the second equation in (\ref{3.3}) holds. Hence we have proved $\VG$ is categorical Morita equivalent to $\operatorname{Vec}_{G'}$.
\end{exm}
%\subsection{ Twisted quantum double versus ordinary quantum double }
%In this section, we will give a sufficient  condition  when $D^{\omega}(G)$  will never be gauge equivalent to $D(G')$ for a finite group $G'$.
%\begin{definition}
	%Let $G$ be a finite abelian group and $\omega$ is a $3$-cocycle on %$G$. We say $D^{\omega}(G)$ is genuine if it will never be gauge %equivalent to $D(G')$ for a finite group $G'$.
%\end{definition}
%Recall that the tensor category $\operatorname{Rep}(D^{\omega}(G))$ has a deep relation between the center of $\operatorname{Vec}_G^{\omega}$:

%Now we state the proof of Theorem \ref{thm1.1}
%\begin{thm} \label{thm3.10}
%	Let $G=Z_{m_1}\times Z_{m_2}\times \cdots Z_{m_n}$ and $\omega$ is a $3$-cocycle on $G$ as in (\ref{2.1}), then $D^{\omega}(G)$ is genuine if and only if one of the following condition holds:  \par 
%		(i) There exist some $i$ such that $a_i \neq 0$.\par 
%	(ii) $A \cap B \neq \emptyset$. 
%\end{thm}

%The next Corollary follows directly from Proposition \ref{Prop3.5} 
%\begin{cor}\label{Cor3.10}
   % Let $G$ be a finite cyclic group with a nontrivial $3$-cocycle $\omega$, then $D^{\omega}(G)$  will never be gauge equivalent to $D(G')$ for a finite group $G'$.
%\end{cor}
\section{On genuineness of twisted quantum double }
In the article \cite{DPR}, the authors asked whether $\DG$ can be obtained by twisting a Hopf algebra or not. 
In \cite{Abcocycle} Example 9.5, the authors have shown that $D^{\omega}(Z_2)$ is genuine for $\omega$  being the normalized $3$-cocycle on $G$ whose cohomology class is nontrivial. This gives a negative answer to the above question. In this section, we will investigate when a twisted quantum double of a cyclic group to be genuine, that is, it can't be  obtained by twisting a Hopf algebra.
\subsection{The structure of $\DG$}
Let $G$ be an abelian group and $\omega$  an abelian $3$-cocycle on $G$. Let $\Gamma^{\omega}$ be the group of all group-like elements in $\DG$, and denote $ \omega_g(x,y)=\frac{\omega(g,x,y)\omega(x,y,g)}{\omega(x,g,y)}$ for $g,x,y \in G$. 
\begin{lemma}[\cite{Abcocycle} Corollary 3.6]
	With the notation above, $\DG$ is spanned by the set of group-like elements $\Gamma^{\omega}$ and it is a commutative algebra. In particular, $\omega_g$ is a $2$-coboundary for any $g \in G$.
\end{lemma}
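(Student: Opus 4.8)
The plan is to exploit the fact that, for abelian $G$, the twisted quantum double $\DG$ decomposes as a direct sum of twisted group algebras of $G$, and that all the structural $2$-cochains appearing in the definition collapse onto the single cochain $\omega_g$. First I would record that when $G$ is abelian every conjugation is trivial, so that the formulas for $\theta_g$ and $\gamma_g$ simplify to
\[
\theta_g(x,y)=\frac{\omega(g,x,y)\omega(x,y,g)}{\omega(x,g,y)}=\omega_g(x,y),\qquad
\gamma_g(x,y)=\frac{\omega(x,y,g)\omega(g,x,y)}{\omega(x,g,y)}=\omega_g(x,y).
\]
Using $g^x=g$, the multiplication rule $(e(g)\otimes x)(e(h)\otimes y)=\theta_g(x,y)\delta_{g,h}\,e(g)\otimes xy$ shows that $A_g:=\operatorname{span}\{e(g)\otimes x\mid x\in G\}$ is a subalgebra with $A_gA_h=0$ for $g\neq h$, so that $\DG=\bigoplus_{g\in G}A_g$ with each $A_g$ isomorphic to the twisted group algebra $k_{\omega_g}[G]$. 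In particular, associativity of $\DG$ forces each $\omega_g$ to be a normalized $2$-cocycle on $G$ (this can also be checked directly from the $3$-cocycle identity on $\omega$).

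Next I would deduce the ``in particular'' clause. Since $\omega$ is an abelian cocycle, $\DG$ is commutative by definition, hence each direct summand $A_g\cong k_{\omega_g}[G]$ is commutative. In $k_{\omega_g}[G]$ the basis elements satisfy $\overline{x}\,\overline{y}=\omega_g(x,y)\overline{xy}$ and $\overline{y}\,\overline{x}=\omega_g(y,x)\overline{xy}$, so commutativity is equivalent to $\omega_g$ being symmetric, i.e. to the alternating bicharacter $\omega_g(x,y)\omega_g(y,x)^{-1}$ being trivial. Because $k$ is algebraically closed of characteristic zero and $G$ is finite abelian, the map $[\sigma]\mapsto \sigma(x,y)\sigma(y,x)^{-1}$ is an isomorphism $H^2(G,k^\times)\xrightarrow{\sim}\operatorname{Alt}(G,k^\times)$; a symmetric cocycle therefore has trivial class, which shows that $\omega_g$ is a $2$-coboundary for every $g$.

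Finally, to show that $\DG$ is spanned by group-like elements, I would fix for each $g$ a normalized trivialization $\mu_g\colon G\to k^\times$ with $\omega_g(x,y)=\mu_g(x)\mu_g(y)\mu_g(xy)^{-1}$, and for $\chi\in\widehat G$, $g\in G$ set
\[
u_{\chi,g}:=\sum_{a\in G}\chi(a)\mu_g(a)\,\bigl(e(a)\otimes g\bigr).
\]
Using $\gamma_g=\omega_g=\delta\mu_g$ in the comultiplication formula, a direct check gives $\Delta(u_{\chi,g})=u_{\chi,g}\otimes u_{\chi,g}$ and $\varepsilon(u_{\chi,g})=1$, so each $u_{\chi,g}\in\GO$. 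For fixed $g$, the coefficient matrix $(\chi(a)\mu_g(a))_{\chi,a}$ is the character table of $G$ rescaled by the invertible diagonal matrix $\operatorname{diag}(\mu_g(a))$, hence invertible; thus the $|G|$ elements $\{u_{\chi,g}\}_{\chi}$ form a basis of $\operatorname{span}\{e(a)\otimes g\mid a\in G\}$. Letting $g$ vary produces $|G|^2=\dim\DG$ linearly independent group-like elements, so $\GO$ spans $\DG$.

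The routine computations (the cocycle and coboundary identities and the $\Delta$-check) are mechanical; the real content is the passage from commutativity to the coboundary conclusion via the description of $H^2(G,k^\times)$ for finite abelian $G$, and I expect this to be the main step, since once a trivialization $\mu_g$ is available both the group-like elements and the spanning statement are essentially forced.
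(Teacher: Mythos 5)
Your proof is correct. One thing to be aware of: the paper itself gives no proof of this lemma --- it is quoted directly from \cite{Abcocycle} (Corollary 3.6) --- so there is no in-paper argument to compare against; the comparison is with the cited source and with how the paper later uses the statement. Your reconstruction is essentially the standard route and fits the paper's framework exactly: for abelian $G$ conjugation is trivial, so $\theta_g=\gamma_g=\omega_g$ and $\DG$ splits into orthogonal blocks isomorphic to the twisted group algebras with cocycles $\omega_g$; commutativity (which in this paper is the \emph{definition} of $\omega$ being abelian, Definition 2.4, so that clause of the lemma is definitional, just as you use it) forces each $\omega_g$ to be symmetric; symmetric $2$-cocycles on a finite abelian group with values in $\mathbb{C}^{\times}$ are coboundaries --- for this step you only need injectivity of $[\sigma]\mapsto\sigma(x,y)\sigma(y,x)^{-1}$, equivalently $\operatorname{Ext}^1_{\mathbb{Z}}(G,\mathbb{C}^{\times})=0$ by divisibility of $\mathbb{C}^{\times}$, both standard; and your group-likes $u_{\chi,g}=\sum_{a}\chi(a)\mu_g(a)\,e(a)\otimes g$ are precisely the elements $\sigma_{\tau}(\chi,g)$ that the paper records in Subsection 4.1 as the complete list of group-like elements of $\DG$, so the count of $|G|^2$ independent group-likes closes the spanning claim. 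The computations you deferred (the $2$-cocycle identity for $\omega_g$ from the $3$-cocycle identity, and $\Delta(u_{\chi,g})=u_{\chi,g}\otimes u_{\chi,g}$ using $\gamma_g=\delta\mu_g$ together with normalization $\mu_g(1)=1$) are indeed routine and go through as stated.
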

Moreover, $\Gamma^{\omega}$ can be seen as an abelian extension, which may help us to figure out the explicit structure of  $\DG$.
\begin{lemma}[\cite{Abcocycle} Proposition 3.8]
	Let $\hat{G}$ be the character group of $G$, then	$\Gamma^{\omega}$  is an extension 
	\begin{equation}\label{(3.4)}
		1 \longrightarrow \hat{G} \longrightarrow \Gamma^{\omega} \longrightarrow G \longrightarrow 1.
	\end{equation}
	For each $g \in G$, let $\omega_g= \delta \tau_g$ for a $1$-cochain $\tau_g: G \rightarrow \mathbb{C}^{\times}$. The $2$-cocycle $\beta$ associated to this central extension is given by 
	\begin{equation}\label{(3.5)}
		\beta(x,y)(g)= \frac{\tau_x(g)\tau_y(g)}{\tau_{xy}(g)}\omega_g(x,y).
	\end{equation}
\end{lemma}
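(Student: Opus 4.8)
The plan is to classify all group-like elements of $\DG$ explicitly and then read off both the extension and the $2$-cocycle $\beta$ directly from their multiplication. The computation is made tractable by a preliminary simplification: when $G$ is abelian one has $g^x=g$ and $g^{-1}xg=x$, so both cochains occurring in the (co)multiplication collapse, $\theta_g(x,y)=\gamma_g(x,y)=\omega_g(x,y)$. By the preceding corollary each $\omega_g$ is a $2$-coboundary, so I fix normalized $1$-cochains $\tau_g$ with $\omega_g=\delta\tau_g$ and $\tau_1=1$.

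First I would show every group-like is homogeneous for the $kG$-grading given by the second tensor factor. Since $\Delta(e(g)\otimes x)=\sum_{hk=g}\gamma_x(h,k)\,e(h)\otimes x\otimes e(k)\otimes x$ sends degree $x$ into degree $(x,x)$, decomposing $a=\sum_x a_x$ and matching homogeneous components of $\Delta(a)=a\otimes a$ forces all but one $a_x$ to vanish. Writing the surviving component as $a=\sum_h f(h)\,e(h)\otimes x$, the group-like condition becomes $f(h)f(k)=f(hk)\gamma_x(h,k)=f(hk)\,\delta\tau_x(h,k)$, i.e. $\delta(f/\tau_x)=1$, while $\varepsilon(a)=f(1)=1$ fixes the normalization. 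Hence $f/\tau_x=\chi$ is a character, and the group-likes are exactly
\[
u_{\chi,x}:=\sum_{h\in G}\chi(h)\tau_x(h)\,e(h)\otimes x,\qquad \chi\in\hat G,\ x\in G.
\]

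The extension and the cocycle now drop out of a single product computation. Using $\theta_g=\omega_g$,
\[
u_{\chi,x}\,u_{\psi,y}=\sum_{h\in G}\chi(h)\psi(h)\tau_x(h)\tau_y(h)\,\omega_h(x,y)\,e(h)\otimes xy,
\]
and rewriting each coefficient as $\phi(h)\tau_{xy}(h)$ shows this equals $u_{\phi,xy}$ with $\phi=\chi\psi\,\beta(x,y)$, where $\beta(x,y)(h)=\frac{\tau_x(h)\tau_y(h)}{\tau_{xy}(h)}\omega_h(x,y)$. Because $\Delta$ is an algebra map a product of group-likes is group-like, and every group-like of degree $xy$ has the form $u_{\phi,xy}$ with $\phi\in\hat G$; this forces $\beta(x,y)\in\hat G$ for free, avoiding any direct check that $h\mapsto\beta(x,y)(h)$ is multiplicative. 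The assignment $u_{\chi,x}\mapsto x$ is then a surjective homomorphism $\GO\to G$ with kernel $\{u_{\chi,1}\}\cong\hat G$, and commutativity of $\DG$ supplies inverses, so $\GO$ is a group fitting into $1\to\hat G\to\GO\to G\to 1$. The normalization $\tau_1=1$ yields $\beta(1,y)=\beta(y,1)=1$, whence each $u_{\chi,1}$ is central; taking $x\mapsto u_{1,x}$ as a set-theoretic section, the identity $u_{1,x}u_{1,y}=u_{\beta(x,y),1}\,u_{1,xy}$ exhibits $\beta$ as precisely the $2$-cocycle of this central extension.

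The step demanding the most care is the classification of group-likes: one must use the correct quasi-Hopf meaning of group-like ($\Delta(a)=a\otimes a$ and $\varepsilon(a)=1$, the associator $\phi$ playing no role here) and ensure the grading argument genuinely forces homogeneity before solving the coboundary equation. Everything afterward is formal — in particular the potentially delicate point that $\beta(x,y)$ is an honest character is dispatched abstractly by closure of group-likes under multiplication, rather than by manipulating the cocycle identity for $\omega$.
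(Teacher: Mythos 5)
Your proof is mathematically sound, but note first that the paper itself contains no proof of this statement: it is quoted verbatim as Proposition 3.8 of \cite{Abcocycle} (just as the classification of group-likes, equation (\ref{(3.55)}), and Corollary 3.6 are quoted), so there is no internal argument to compare against. Your proposal is in effect a self-contained reconstruction of the cited result, and it holds together: the collapse $\theta_g=\gamma_g=\omega_g$ for abelian $G$ is correct; the homogeneity argument for group-likes is the standard one and is valid because the comultiplication preserves the $kG$-degree; the coboundary equation $f(h)f(k)=f(hk)\,\delta\tau_x(h,k)$ does characterize $f=\chi\tau_x$ with $\chi\in\hat G$, recovering (\ref{(3.55)}); and the single product computation $u_{\chi,x}u_{\psi,y}=u_{\chi\psi\beta(x,y),\,xy}$ with $\beta(x,y)(h)=\frac{\tau_x(h)\tau_y(h)}{\tau_{xy}(h)}\omega_h(x,y)$ simultaneously yields the extension and formula (\ref{(3.5)}). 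Your device for showing $\beta(x,y)\in\hat G$ --- closure of group-likes under multiplication (legitimate, since $\Delta$ is an algebra map even in a quasi-bialgebra) plus the completed classification in degree $xy$ --- is clean and avoids the cocycle manipulation that a direct check would require.

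One imprecision should be repaired: the assertion that ``commutativity of $\DG$ supplies inverses'' is not right as stated, since a commutative finite monoid need not be a group. But inverses are immediate from your own product formula: given $u_{\chi,x}$, the element $u_{\psi,x^{-1}}$ with $\psi=\bigl(\chi\,\beta(x,x^{-1})\bigr)^{-1}$ satisfies $u_{\chi,x}u_{\psi,x^{-1}}=u_{1,1}=1$, using $\beta(1,y)=\beta(y,1)=1$ from the normalization $\tau_1\equiv 1$. (What commutativity of $\DG$ does give you, and what you should cite it for, is centrality of the kernel $\hat G$ in $\Gamma^{\omega}$, making the extension central.) With that substitution the argument is complete; associativity of $\beta$-twisted multiplication, hence the $2$-cocycle identity for $\beta$, comes for free from associativity of the algebra $\DG$, which is genuine since only the coalgebra structure of a quasi-Hopf algebra is deformed.
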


From  now on, let $G=Z_m=\langle g \rangle$ be a finite cyclic group and $\omega(g^i,g^j,g^k)= \zeta_m^{ai[\frac{j+k}{m}]}$ be  a nontrivial normalized $3$-cocycle. In this case, $\widehat{G}=\widehat{Z_m}=\langle \chi \rangle$, where $\chi(g)=\zeta_m$. We will determine when $\DG$ is genuine. 
The first  task is to figure out the group structure on $\GO$. Since $\GO$ is totally determined by $\DG$,   it is independent of  the choice of $\tau_x$ for each $ x\in G$. 
\begin{lemma}
	Let 
	$\tau_{g^i}(g^j)=\zeta_{m^2}^{aij}$ for all $0\leq i,j \leq m$. then $\delta \tau_{g^i}=\omega_{g^i}$. Further, $\beta(g^i, g^j)=\chi^{2a[\frac{i+j}{m}]}$ in this case.
\end{lemma}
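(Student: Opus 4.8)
The plan is to verify both assertions by direct computation, treating them separately. Both reduce to careful bookkeeping of exponents, and the one genuinely substantive point is that reducing a power of $g$ modulo $m$ inside the argument (or index) of $\tau$ is exactly what manufactures the floor function appearing in $\omega$, via the identity $\zeta_{m^2}^{m}=\zeta_m$.

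For the first claim I would compute $\delta\tau_{g^i}$ and $\omega_{g^i}$ independently and match them. With the multiplicative, trivial-action convention $\delta\tau(x,y)=\tau(x)\tau(y)\tau(xy)^{-1}$, I have
\[
\delta\tau_{g^i}(g^j,g^k)=\tau_{g^i}(g^j)\,\tau_{g^i}(g^k)\,\tau_{g^i}(g^{j+k})^{-1}.
\]
The crucial subtlety is that in $Z_m$ the element $g^{j+k}$ must be read as $g^{(j+k)\bmod m}$: for $0\le j,k<m$ we have $[\tfrac{j+k}{m}]\in\{0,1\}$, and when $j+k\ge m$ the reduction replaces $g^{j+k}$ by $g^{j+k-m}$, so that $\tau_{g^i}(g^{j+k})=\zeta_{m^2}^{ai(j+k-m)}$. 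The resulting extra factor is $\zeta_{m^2}^{aim}=\zeta_m^{ai}$, whence $\delta\tau_{g^i}(g^j,g^k)=\zeta_m^{ai[\frac{j+k}{m}]}$ uniformly. On the other side, expanding $\omega_{g^i}(g^j,g^k)=\frac{\omega(g^i,g^j,g^k)\,\omega(g^j,g^k,g^i)}{\omega(g^j,g^i,g^k)}$ with the given $\omega$, the two terms carrying $[\tfrac{i+k}{m}]$ cancel and only $\zeta_m^{ai[\frac{j+k}{m}]}$ survives. The two expressions agree, giving $\delta\tau_{g^i}=\omega_{g^i}$.

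For the second claim I would substitute into formula \eqref{(3.5)}, namely $\beta(g^i,g^j)(g^l)=\frac{\tau_{g^i}(g^l)\tau_{g^j}(g^l)}{\tau_{g^{i+j}}(g^l)}\,\omega_{g^l}(g^i,g^j)$, and evaluate at an arbitrary $g^l$. The $\tau$-ratio is handled exactly as above: reducing the index $g^{i+j}$ modulo $m$ produces $\zeta_m^{al[\frac{i+j}{m}]}$. By the first claim, $\omega_{g^l}(g^i,g^j)=\zeta_m^{al[\frac{i+j}{m}]}$ as well, so the product is $\zeta_m^{2al[\frac{i+j}{m}]}$. Since $\chi(g^l)=\zeta_m^{l}$, this equals $\chi^{2a[\frac{i+j}{m}]}(g^l)$; as $l$ is arbitrary this yields the claimed equality of characters $\beta(g^i,g^j)=\chi^{2a[\frac{i+j}{m}]}$.

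There is no serious obstacle beyond correct bookkeeping; the only place demanding care is the mod-$m$ reduction inside $\tau$, which is precisely what produces the floor function (and hence a potentially nontrivial extension class), together with tracking the passage from $\zeta_{m^2}$ to $\zeta_m$. I would also note that the choice $\tau_{g^i}(g^j)=\zeta_{m^2}^{aij}$ is not canonical—any other trivialization of $\omega_{g^i}$ differs by a character—but since $\GO$ depends only on $\DG$, the cohomology class of the extension $\beta$ is independent of this choice, so the stated representative suffices.
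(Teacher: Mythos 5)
Your proposal is correct and follows essentially the same route as the paper's own proof: a direct computation of $\delta\tau_{g^i}$ in which reducing $g^{j+k}$ modulo $m$ produces the factor $\zeta_{m^2}^{aim[\frac{j+k}{m}]}=\zeta_m^{ai[\frac{j+k}{m}]}$, followed by substitution into formula \eqref{(3.5)} to get $\beta(g^i,g^j)(g^l)=\zeta_m^{2al[\frac{i+j}{m}]}=\chi^{2a[\frac{i+j}{m}]}(g^l)$. The only difference is cosmetic: you spell out the cancellation of the $[\frac{i+k}{m}]$ terms in expanding $\omega_{g^i}$, which the paper asserts without comment, and you add a (correct) remark on independence of the choice of trivialization $\tau$.
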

\begin{proof}
	Direct computation shows that
	$$ \delta \tau_{g^i}(g^j,g^k)=\frac{\tau_{g^i}(g^j)\tau_{g^i}(g^k)}{\tau_{g^i}(g^{(j+k)'})}=\frac{\zeta_{m^2}^{aij}\zeta_{m^2}^{aik}}{\zeta_{m^2}^{ai(j+k)'}}=\zeta_m^{ai[\frac{j+k}{m}]}=\omega_{g^i}(g^j,g^k).
	$$
	Here $(j+k)'$ denotes the reminder of $j+k$ modulo $m$.
	Now 
	$$\beta(g^j,g^k)(g^i)=\frac{\tau_{g^j}(g^i)\tau_{g^k}(g^i)}{\tau_{g^{(j+k)}(g^i)}}\omega_{g^i}(g^j,g^k)=\zeta_m^{2ai[\frac{j+k}{m}]}.$$
	Hence $\beta(g^i, g^j)=\chi^{2a[\frac{i+j}{m}]}.$
\end{proof}
Note that $\Gamma^{\omega}$ consists of all group-like elements, hence it is benefit to write down explicit formulas of all group-like elements. By \cite{Abcocycle}, a nonzero element $u$ in $\DG$ is a group-like element if and only if \begin{equation} \label{(3.55)}
	u= \sigma_{\tau}(\alpha,x)= \sum_{g \in G}\alpha(g)\tau_x(g)e(g) \otimes x.
\end{equation}
for  $\alpha \in \widehat{G}$ and $x\in G$. Here we have assumed $G$ is a cyclic group, we can simplify the expression of $\sigma_{\tau}(\alpha, x)$. 
\begin{lemma}\begin{itemize}
    \item[\emph{(i)}] We have $\sigma_{\tau}(\chi^j, 1)=\chi^j \otimes 1$ and $\sigma_{\tau}(\chi^j,g)=\sum_{i=0}^{m-1}\zeta_{m^2}^{ai}\zeta_m^{ij} e(g^i)\otimes g$, where $ 0 \leq j \leq m-1$.
     \item[\emph{(ii)}] Let $s=\sigma_{\tau}(\chi, 1), t=\sigma_{\tau}(1,g)$, then $\Gamma^{\omega}$ has the following presentation: \begin{equation}\label{(3.65)}		\left\langle s, t| t^{\frac{m^2}{(m,2a)}}=s^m=1, \ s^{2a}=t^{m},\ st=ts \right\rangle.
	\end{equation}
\end{itemize}

\end{lemma}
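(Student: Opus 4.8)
The plan is to dispatch part (i) by a direct substitution and to derive part (ii) from the central extension \eqref{(3.4)} together with the $2$-cocycle computed in the previous lemma. For part (i), I would unwind the defining formula \eqref{(3.55)}: write $\sigma_\tau(\chi^j,x)=\sum_{i=0}^{m-1}\chi^j(g^i)\,\tau_x(g^i)\,e(g^i)\otimes x$ and substitute $\chi^j(g^i)=\zeta_m^{ij}$ and $\tau_{g^i}(g^l)=\zeta_{m^2}^{ail}$. For $x=1$ one has $\tau_1(g^i)=1$, so $\sigma_\tau(\chi^j,1)=\sum_i\zeta_m^{ij}e(g^i)\otimes 1=\chi^j\otimes 1$ after identifying the character $\chi^j$ with $\sum_i\zeta_m^{ij}e(g^i)\in(kG)^*$; for $x=g$ one has $\tau_g(g^i)=\zeta_{m^2}^{ai}$, which yields $\sigma_\tau(\chi^j,g)=\sum_i\zeta_{m^2}^{ai}\zeta_m^{ij}e(g^i)\otimes g$. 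This is nothing but the substitution.

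For part (ii), I would first read off $s=\sigma_\tau(\chi,1)=\chi\otimes1$ and $t=\sigma_\tau(1,g)$ from part (i). Since $G$ is cyclic there are no indices $r<s<t$, so $\omega$ is abelian by Lemma \ref{lem2.5}; hence $\DG$ is commutative and its group $\GO$ of group-likes is abelian, giving $st=ts$. Next I would record the multiplication of group-likes: because $G$ is abelian one has $g^x=g$ and $\theta_g(x,y)=\omega_g(x,y)$, so the product rule of $\DG$ collapses to $\sigma_\tau(\alpha,x)\,\sigma_\tau(\gamma,y)=\sigma_\tau\big(\alpha\gamma\,\beta(x,y),\,xy\big)$, where $\beta$ is precisely the extension $2$-cocycle of \eqref{(3.5)} (this uses $\beta(x,y)(g)=\tfrac{\tau_x(g)\tau_y(g)}{\tau_{xy}(g)}\omega_g(x,y)$). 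In particular $s^k=\sigma_\tau(\chi^k,1)$, so $s^m=1$ and $s$ has order exactly $m$.

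The crux is the computation of $t^m$. Telescoping the product rule gives $t^k=\sigma_\tau\big(\prod_{j=1}^{k-1}\beta(g^j,g),\,g^k\big)$, whence $t^m=\sigma_\tau\big(\prod_{j=1}^{m-1}\beta(g^j,g),\,1\big)$. By the previous lemma $\beta(g^j,g)=\chi^{2a[\frac{j+1}{m}]}$, and $[\frac{j+1}{m}]=0$ for $1\le j\le m-2$ while $[\frac{m}{m}]=1$ at the single wraparound index $j=m-1$; hence $\prod_{j=1}^{m-1}\beta(g^j,g)=\chi^{2a}$ and $t^m=\sigma_\tau(\chi^{2a},1)=s^{2a}$. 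Together with $s^m=1$ this forces $t^{m^2/(m,2a)}=(s^{2a})^{m/(m,2a)}=(s^m)^{2a/(m,2a)}=1$, so all relations of \eqref{(3.65)} hold.

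Finally, to see the presentation is exact, I would note that $|\GO|=|\widehat{G}|\,|G|=m^2$ by \eqref{(3.4)}, whereas the abelian group abstractly presented by \eqref{(3.65)} has order at most $m^2$: every element reduces to $s^it^j$ with $0\le i,j<m$ using $s^m=1$ and $t^m=s^{2a}$. Since the relations hold in $\GO$ and $s,t$ generate it, the resulting surjection from the abstract group onto $\GO$ must be an isomorphism. I expect the main obstacle to be the bookkeeping in the telescoping product for $t^m$ — one must verify that $\beta(g^j,g)$ contributes only at the index where $g^{m-1}$ wraps to $g^m=1$, since this is exactly what produces the exponent $2a$ in the relation $s^{2a}=t^m$ and thereby the order $m^2/(m,2a)$ of $t$.
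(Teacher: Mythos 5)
Your proof is correct and follows essentially the same route as the paper: part (i) by direct substitution into the defining formula, and part (ii) by establishing $s^m=1$, $st=ts$, and $t^m=s^{2a}$ (via the single wraparound at the index $m-1$), then deducing the presentation from the fact that $s$ and $t$ generate $\Gamma^{\omega}$. The only cosmetic differences are that you route the computation of $t^m$ through the extension cocycle $\beta(g^i,g^j)=\chi^{2a[\frac{i+j}{m}]}$ of the preceding lemma rather than multiplying out the coefficients $\theta_{g^i}(g^{m-1},g)$ directly inside $D^{\omega}(G)$ as the paper does, and that you justify exactness of the presentation by the counting argument $|\Gamma^{\omega}|=m^2$, a point the paper leaves implicit.
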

\begin{proof}
	First, by direct computation \begin{align}\label{(3.66)}
	e(g^i)=1_i&=\frac{1}{m}\sum_{l=0}^{m-1}\zeta_m^{-li}\chi^l, \\
		\chi^j&=\sum_{i=0}^{m-1}\zeta_m^{ij}e(g^i).
	\end{align} 
Then  $$\sigma_{\tau}(\chi^j,1)=\sum_{i=0}^{m-1}\chi^j(g^i)\tau_1(g^i)e(g^i)\otimes 1= \sum_{i=0}^{m-1}\zeta_m^{ij}e(g^i)\otimes 1=\chi^j \otimes 1,$$
and 
$$\sigma_{\tau}(\chi^j,g)=\sum_{i=0}^{m-1}\chi^j(g^i)\tau_{g}(g^i)e(g^i)\otimes g=\sum_{i=0}^{m-1}\zeta_{m^2}^{ai}\zeta_m^{ij}e(g^i) \otimes g.$$
By multiplication rule of twisted quantum double, 
$$\stg \cdot \stx=\sigma_{\tau}(\chi,g)=\stx \cdot \stg.$$
Suppose $0 \leq l \leq m-1$, we have
$$\stg^l=\sum_{i=0}^{m-1}\zeta_{m^2}^{ail}e(g^i)\otimes g^l=\sigma_{\tau}(1,g^l).$$ Moreover, 
 $$\stg^m=\sum_{i=0}^{m-1}\zeta_{m^2}^{mai}e(g^i)\theta_{g^i}(g^{m-1},g)\otimes 1=\sum_{i=0}^{m-1}\zeta_m^{2ai}e(g^i)\otimes 1=\chi^{2a}\otimes 1=\stx^{2a}.$$
It is easy to verify that $\stx^{m}=1$ and thus $\stx^{2a}$ has order $\frac{m}{(m,2a)}$. This implies that  $\stg$ has order $\frac{m^2}{(m,2a)}$. Obviously, each $\sigma_{\tau}(\chi^j,g^k)$ can be expressed as a production of some powers of $\stg$ and $\stx$. Thus we get the desired presentation of $\GO$.
\end{proof}

$\GO$ is actually a metacyclic group, for details, see \cite{metacyclic}. In general, it is not easy to determine the group structure of $\GO$ while in our case $\GO$ can be gotten not so hard.
\begin{prop}
	We have $\GO\cong Z_{(2a,m)} \times Z_{\frac{m^2}{(2a,m)}}$. 
\end{prop}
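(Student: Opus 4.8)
The plan is to read everything off the presentation (\ref{(3.65)}). The relation $st=ts$ says $\GO$ is abelian, and it is generated by the two elements $s,t$, so I would realize it as a quotient $\mathbb{Z}^2/\Lambda$ in which the free generators of $\mathbb{Z}^2$ map to $s$ and $t$ and $\Lambda$ is the subgroup of relations. Reading (\ref{(3.65)}) off line by line, $\Lambda$ is generated by the vectors $(m,0)$, $\bigl(0,\tfrac{m^2}{(m,2a)}\bigr)$ and $(2a,-m)$, coming from $s^m=1$, $t^{m^2/(m,2a)}=1$ and $s^{2a}t^{-m}=1$ respectively. Thus $\GO$ is the cokernel of the homomorphism $\mathbb{Z}^3\to\mathbb{Z}^2$ whose matrix has these three vectors as its columns, i.e. of
\[
R^{\mathsf T},\qquad R=\begin{pmatrix} m & 0 \\ 0 & \tfrac{m^2}{(m,2a)} \\ 2a & -m \end{pmatrix},
\]
and the entire statement reduces to computing the Smith normal form of $R$.

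Next I would extract the two invariant factors $d_1\mid d_2$ of $R$. The first is the gcd of all entries, $d_1=\gcd\bigl(m,\,2a,\,\tfrac{m^2}{(m,2a)}\bigr)=(m,2a)$, using that $(m,2a)\mid m$ and $m\mid \tfrac{m^2}{(m,2a)}$. The product $d_1d_2$ is the gcd of the three $2\times2$ minors of $R$, namely $\tfrac{m^3}{(m,2a)}$, $-m^2$ and $-\tfrac{2am^2}{(m,2a)}$; factoring out $\tfrac{m^2}{(m,2a)}$ leaves $d_1 d_2=\tfrac{m^2}{(m,2a)}\cdot\gcd\bigl(m,(m,2a),2a\bigr)=\tfrac{m^2}{(m,2a)}\cdot(m,2a)=m^2$. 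Hence $d_2=\tfrac{m^2}{(m,2a)}$, and since $(m,2a)\mid m\mid \tfrac{m^2}{(m,2a)}$ the required divisibility $d_1\mid d_2$ holds. Therefore $\GO\cong Z_{(m,2a)}\times Z_{m^2/(m,2a)}$, which is exactly the claim.

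As a sanity check I would recompute the order independently from the central extension (\ref{(3.4)}) $1\to\widehat{G}\to\GO\to G\to1$ with $|\widehat{G}|=|G|=m$, giving $|\GO|=m^2=d_1d_2$; this also shows that the relation $t^{m^2/(m,2a)}=1$ is redundant, being already forced by $s^m=1$ and $s^{2a}=t^m$ (the vector $\bigl(0,\tfrac{m^2}{(m,2a)}\bigr)$ lies in the span of $(m,0)$ and $(2a,-m)$), which is the one point that might otherwise look suspicious. I do not expect a genuine conceptual obstacle here: the only care needed is the gcd bookkeeping, in particular the identity $\gcd\bigl(\tfrac{m}{(m,2a)},\tfrac{2a}{(m,2a)}\bigr)=1$ that makes both the minor computation and the redundancy argument go through. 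Once the Smith normal form is in hand, the isomorphism is immediate.
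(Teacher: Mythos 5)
Your proof is correct, but it takes a different route from the paper. You compute the Smith normal form of the relation matrix coming from the presentation (\ref{(3.65)}), whereas the paper argues with element orders: it notes $\GO$ is abelian of order $m^2$ with at most two generators, computes that $\stg$ has order $\frac{m^2}{(2a,m)}$, shows by a direct estimate that every group-like $\sigma_{\tau}(\chi^i,g^j)$ has order at most $\frac{m^2}{(2a,m)}$, and then invokes the structure theorem (in $Z_{m_1}\times Z_{m_2}$ with $m_1\mid m_2$ the maximal element order is $m_2$), splitting into the cases $(2a,m)=1$ and $(2a,m)\neq 1$. Your approach is more mechanical and avoids both the case split and the element-order estimates; it also exposes the redundancy of the relation $t^{m^2/(m,2a)}=1$, which the paper never remarks on. The one thing your argument leans on more heavily is that (\ref{(3.65)}) is a \emph{complete} presentation (extra hidden relations would shrink the cokernel), while the paper's order-theoretic argument only needs that $s,t$ generate and that $|\GO|=m^2$. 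Your sanity check actually repairs this: since the presented group has order $d_1d_2=m^2$ and $\GO$ is a quotient of it of order $m^2$ by the extension (\ref{(3.4)}), the two coincide, so your proof is rigorous independently of how much of (\ref{(3.65)}) one trusts.
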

\begin{proof}
	 It is obvious that $\GO$ is an abelian group and has order $m^2$. By the presentation of $\GO$, the number of generators of $\GO$, must be equal or less than $2$. Thus we may write $\GO \cong Z_{m_1} \times Z_{m_2}$, where $m_1 \mid m_2$.
		 Consider the element $\stg$ and we know that its order is $\frac{m^2}{(2a,m)}$. Hence $\GO$ has a cyclic subgroup $\langle \stg \rangle$ of order $\frac{m^2}{(2a,m)}$.
 If $(2a,m)=1$, then $\stg$ has order $m^2$. So $\GO \cong Z_{m^2}=\langle \stg \rangle$.  Actually, we may regard it as $Z_1 \times Z_{m^2}$ for consistency. 
\par  If $(2a,m)\neq 1$, 
	then $\frac{m^2}{(2a,m)}$ is strict less than $m^2$. 
	We claim that for  arbitrary element $h=\sigma_{\tau}(\chi^i,g^j)$, $ 0\leq i,j <m$, the order  of $h$ will be less than or equal to  $\frac{m^2}{(2a,m)}$. The case $i=j=0$ is trivial and for the case $i\neq 0$ but $j=0$, $ \operatorname{ord}(h)=\frac{m}{(m,i)} \leq m \leq \frac{m^2}{(m,2a)}$. The remaining case is that $j\neq 0$, by direct computation. 
 \begin{align*}
     h^{\frac{m^2}{(m,2aj)}}&=(\stx^{im}\cdot \stg^{jm})^{\frac{m}{(m,2aj)}} \\
     &=(\stx^{2aj})^{\frac{m}{(m,2aj)}}\\
     &=1.
 \end{align*}
 So $\operatorname{ord}(h) \leq\frac{m^2}{(m,2aj)}$. Note that $(m,2aj)\geq (m,2a)$, hence  $\frac{m^2}{(m,2aj)} \leq \frac{m^2}{(m,2a)}$.
 So $\left\langle  \stg \right\rangle $ is a maximal subgroup of $G$. 
	%So $\GO$ has no element of order more than $\frac{m^2}{(2a,m)}$ and there is no real cyclic subgroup of $\GO$ properly including $\langle \stg \rangle$.  
	%We may assume  $\GO \cong Z_{m_1} \times Z_{m_2}$ with $m_1 |m_2$ in this case then $\langle \stg \rangle$ must be isomorphic to  $Z_{m_2}$ since $H$ is a maximal cyclic subgroup of $\GO$ . 
	 %Note that $\GO/\langle \stg \rangle= \overline{\left\langle \stx\right\rangle }$ is a cyclic quotient group  order $(2a,m)$ by the presentation of $\GO$. 
	 Since $\GO \cong Z_{m_1}\times Z_{m_2}$ with $m_1 \mid m_2$, $\left\langle \stg \right\rangle $ must be isomorphic to  $Z_{m_2}$. Hence $Z_{m_1}$ has order $(2a,m)$. So $\GO\cong  Z_{(2a,m)} \times Z_{\frac{m^2}{(2a,m)}}$.
	%	Hence $x, y $ may be written as $x=(1,g)^{a_1}(\chi,1)^{a_2}$ and $y=(1,g)^{b_1}(\chi,1)^{b_2}$ for $0\leq a_i, b_i <m$.  Since $(1,g), x, y$ are generators of $\GO$, then $(1,g)^{k_1}x^{k_2}y^{k_3}=(1,1)$ should imply $k_1\equiv  k_2 \equiv k_3 \equiv 0\  \text{mod} \ m$. 
	%	But we could choose  such a pair$(k_1,k_2,k_3)$ which are not congruent to $0$ for all $k_i$. First, there exists  such that 
\end{proof}
\subsection{A criterion of $3$-coboudaries on abelian groups}
  Let us recall an approach to determine whether a $3$-cohomology on a finite abelian group is nontrivial or not. \par 
Let $H\cong Z_{m_1}\times Z_{m_2} \times \cdots \times Z_{m_n}$ be a finite abelian group and  $(B_{\bullet},\partial_{\bullet})$ be the bar resolution of $H$. By applying $\operatorname{Hom}_{\mathbb{Z}H}(-, \vmathbb{k}^{\times})$ we get a complex $(B^{*}_{\bullet},\partial^{*}_{\bullet})$, where $\vmathbb{k}^{\times}$ is a trivial $H$-module. In \cite{QQG} Section 3, the authors defined another free resolution $(K_{\bullet},d_{\bullet})$ for arbitrary abelian groups and constructed a chain map $F_{\bullet}$ from $(K_{\bullet},d_{\bullet})$ to $(B_{\bullet},\partial_{\bullet})$. For our purpose, we only need the morphism $F_3$, see [\cite{QQG} Lemma 3.9] :
$$\begin{aligned}
	&	F_3: K_3  \rightarrow B_3, \\
	&	\Psi_{r, s, t}  \mapsto\left[g_r, g_s, g_t\right]-\left[g_s, g_r, g_t\right]-\left[g_r, g_t, g_s\right]+\left[g_t, g_r, g_s\right]+\left[g_s, g_t, g_r\right]-\left[g_t, g_s, g_r\right], \\
	&	\Psi_{r, r, s}  \mapsto \sum_{l=0}^{m_r-1}\left(\left[g_r^l, g_r, g_s\right]-\left[g_r^l, g_s, g_r\right]+\left[g_s, g_r^l, g_r\right]\right), \\
	&	\Psi_{r, s, s}  \mapsto \sum_{l=0}^{m_s-1}\left(\left[g_r, g_s^l, g_s\right]-\left[g_s^l, g_r, g_s\right]+\left[g_s^l, g_s, g_r\right]\right), \\
	&	\Psi_{r, r, r}  \mapsto \sum_{l=0}^{m_r-1}\left[g_r, g_r^l, g_r\right],
\end{aligned}
$$
for $1 \leq r<s<t \leq n$, where the symbols like $\Psi_{r, r, r}$ are terms in the resolutions $(K_{\bullet},d_{\bullet})$. 
Moreover, we have the following observation since $F_3^*$ induces an isomorphism between $3$-cohomology groups.
\begin{lemma} \label{lem3.16}
	Let $\phi $ in $(B^{*}_{\bullet},\partial^{*}_{\bullet})$ be a $3$-cocycle. Then $\phi$ is a $3$-coboundary if and only if $F_3^{*}(\phi)$ is a $3$-coboundary.
\end{lemma}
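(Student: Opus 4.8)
The proof rests on a single homological fact that the surrounding text has already flagged: the induced map on third cohomology is an isomorphism. I would first make this precise. Both $(B_\bullet,\partial_\bullet)$ and $(K_\bullet,d_\bullet)$ are free (hence projective) resolutions of the trivial module $\mathbb{Z}$ over $\mathbb{Z}H$, and $F_\bullet$ is a chain map lifting $\operatorname{id}_{\mathbb{Z}}$ on the augmentations. By the comparison theorem (the fundamental lemma of homological algebra), any two such resolutions are chain homotopy equivalent and $F_\bullet$ realizes one such equivalence; therefore $F_\bullet$ induces an isomorphism on $\operatorname{Ext}^{*}_{\mathbb{Z}H}(\mathbb{Z},\vmathbb{k}^{\times})=H^{*}(H,\vmathbb{k}^{\times})$. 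Applying $\operatorname{Hom}_{\mathbb{Z}H}(-,\vmathbb{k}^{\times})$ turns $F_\bullet$ into a cochain map $F^{*}_\bullet\colon (B^{*}_\bullet,\partial^{*}_\bullet)\to (K^{*}_\bullet,d^{*}_\bullet)$ (note the reversal of direction forced by contravariance of $\operatorname{Hom}$), and in degree $3$ the induced map $\bar F\colon H^{3}(B^{*}_\bullet)\to H^{3}(K^{*}_\bullet)$ is an isomorphism.

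With this in hand the lemma is immediate, since being a $3$-coboundary is exactly the condition of having trivial cohomology class among $3$-cocycles. For the forward direction, if $\phi=\partial^{*}\psi$ for some $\psi\in B^{*}_2$, then because $F^{*}_\bullet$ commutes with the differentials we get $F_3^{*}(\phi)=F_3^{*}\partial^{*}\psi=d^{*}F_2^{*}(\psi)$, so $F_3^{*}(\phi)$ is a coboundary; this direction uses only that $F^{*}_\bullet$ is a cochain map. For the converse, observe first that $F_3^{*}(\phi)$ is automatically a $3$-cocycle (a cochain map sends cocycles to cocycles), so it makes sense to speak of its class $[F_3^{*}(\phi)]=\bar F([\phi])$. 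If $F_3^{*}(\phi)$ is a coboundary then $[F_3^{*}(\phi)]=0$, and since $\bar F$ is injective we conclude $[\phi]=0$, that is, $\phi$ is a coboundary.

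The only genuine content, and thus the step I would be most careful about, is the verification that $F_\bullet$ really lifts the identity augmentation so that the comparison theorem applies; once the isomorphism $\bar F$ is granted (as the text indicates, relying on the explicit formula for $F_3$ recorded above), everything else is the formal observation that an isomorphism of cohomology groups both preserves and reflects the zero class. I would therefore spend essentially all the writing on pinning down the compatibility of $F_\bullet$ with the augmentations of the two resolutions and on the direction conventions; no nontrivial computation is needed beyond the displayed description of $F_3$.
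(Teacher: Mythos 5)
Your proposal is correct and follows essentially the same route as the paper, which justifies the lemma precisely by the observation that $F_3^{*}$ induces an isomorphism between the $3$-cohomology groups of the two complexes (a consequence of the comparison theorem for the free resolutions $(K_{\bullet},d_{\bullet})$ and $(B_{\bullet},\partial_{\bullet})$ constructed in \cite{QQG}). Your write-up simply makes explicit the two formal steps the paper leaves implicit: that a cochain map sends coboundaries to coboundaries, and that injectivity of the induced map on $H^{3}$ reflects the trivial class back.
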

The following lemma provides a criterion for whether a $3$-cochain $f \in \operatorname{Hom}_{\mathbb{Z}H}(K_3,\vmathbb{k}^{\times})$ is $3$-coboundary.
\begin{lemma}[\cite{QQG} Lemma 3.3] \label{lem 3.17}
	The $3$-cochain $f \in \operatorname{Hom}_{\mathbb{Z}H}(K_3, \vmathbb{k}^{\times})$ is $3$-coboundary if and only if for all $1 \leq i <j \leq n$, there are $g_{i,j} \in \vmathbb{k}^{\times}$ such that 
	\begin{equation}\label{3.7}
		f(\Psi_{i,,i,j})=g_{i,j}^{m_i}, \  \ \ f(\Psi_{i,j,j})=g_{i,j}^{-m_j}, \ \ \  \text{and} \ \ f(\Psi_{l,l,l})=1, \ \ f(\Psi_{r,s,t})=1.
	\end{equation}
	for $1 \leq l \leq n $ and $ 1\leq r < s< t \leq n$.
\end{lemma}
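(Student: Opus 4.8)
The plan is to read the criterion straight off the explicit free resolution $(K_\bullet,d_\bullet)$ of $\mathbb{Z}$ over $\mathbb{Z}H$ used in \cite{QQG}, which is the total complex of the tensor product of the standard $2$-periodic resolutions of the cyclic factors $Z_{m_i}=\langle g_i\rangle$. First I would record the generators in the relevant degrees. The module $K_3$ is free over $\mathbb{Z}H$ on the symbols $\Psi_{r,s,t}$ ($r<s<t$), $\Psi_{r,r,s}$, $\Psi_{r,s,s}$ ($r<s$) and $\Psi_{r,r,r}$, while $K_2$ is free on symbols I denote $\Phi_{r,s}$ ($r<s$) and $\Phi_{r,r}$ (the ``$(1,1)$'' and ``$(2)$'' pieces of the tensor product). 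Writing $N_i:=1+g_i+\cdots+g_i^{m_i-1}$, the Koszul/Leibniz rule for the tensor product gives $d_3$ explicitly:
\begin{align*}
d_3\Psi_{r,r,r} &= (g_r-1)\Phi_{r,r}, & d_3\Psi_{r,r,s} &= N_r\Phi_{r,s}+(g_s-1)\Phi_{r,r},\\
d_3\Psi_{r,s,s} &= (g_r-1)\Phi_{s,s}-N_s\Phi_{r,s}, & d_3\Psi_{r,s,t} &= (g_r-1)\Phi_{s,t}-(g_s-1)\Phi_{r,t}+(g_t-1)\Phi_{r,s}.
\end{align*}

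Next I would use that $f$ is a $3$-coboundary precisely when it lies in the image of $\delta_2=\operatorname{Hom}_{\mathbb{Z}H}(d_3,\vmathbb{k}^{\times})$, i.e.\ when $f=h\circ d_3$ for some $h\in\operatorname{Hom}_{\mathbb{Z}H}(K_2,\vmathbb{k}^{\times})$ (note the statement does not even presuppose $f$ to be a cocycle, so it really is just the description of $\operatorname{im}\delta_2$). Since $\vmathbb{k}^{\times}$ is a \emph{trivial} $H$-module, any $\mathbb{Z}H$-module homomorphism kills the augmentation ideal: $N_i$ acts as the $m_i$-th power map and $g_i-1$ acts trivially. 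Evaluating $h\circ d_3$ on the four families of generators via the formulas above then yields, in multiplicative notation,
\begin{align*}
(h\circ d_3)(\Psi_{r,r,r}) &= 1, & (h\circ d_3)(\Psi_{r,s,t}) &= 1,\\
(h\circ d_3)(\Psi_{r,r,s}) &= h(\Phi_{r,s})^{m_r}, & (h\circ d_3)(\Psi_{r,s,s}) &= h(\Phi_{r,s})^{-m_s},
\end{align*}
which is exactly the asserted shape of a coboundary once we set $g_{i,j}:=h(\Phi_{i,j})$.

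From here both implications are immediate. For the forward direction I take a witnessing $h$ with $f=h\circ d_3$ and put $g_{i,j}:=h(\Phi_{i,j})$; the displayed identities are precisely the four required conditions. Conversely, given scalars $g_{i,j}$ satisfying \eqref{3.7}, I define $h$ on generators by $h(\Phi_{i,j}):=g_{i,j}$ for $i<j$ and $h(\Phi_{i,i}):=1$, extend $\mathbb{Z}H$-linearly, and verify $h\circ d_3=f$ by the same evaluation; crucially the values of $h$ on the diagonal generators $\Phi_{i,i}$ are irrelevant, because every boundary term feeding into them carries a factor $g_i-1$ that dies on the trivial module.

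The only place that needs genuine care is the bookkeeping of $d_3$ and the signs inherited from the tensor-product resolution (in particular the $-N_s$ in $d_3\Psi_{r,s,s}$, which is what produces the exponent $-m_j$ rather than $+m_j$). Once $d_3$ is pinned down correctly, the lemma reduces to a single evaluation, so I expect no real obstacle beyond that initial accounting. Since the statement is quoted as \cite{QQG}~Lemma~3.3, one could alternatively just cite it, but the argument above is self-contained given the resolution already recalled in the paper.
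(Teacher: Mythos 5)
Your proposal is correct. The paper itself offers no proof of this statement—it is quoted verbatim as Lemma 3.3 of \cite{QQG}—and your argument is precisely the one underlying that cited lemma: identify $K_\bullet$ as the tensor product of the $2$-periodic resolutions of the cyclic factors, write out $d_3$ via the Koszul rule, and observe that on the trivial module $\vmathbb{k}^{\times}$ every $(g_i-1)$ term dies while $N_i$ acts as the $m_i$-th power map, so that $\operatorname{im}\delta_2$ is exactly the set of cochains of the displayed shape with $g_{i,j}=h(\Phi_{i,j})$. Your sign bookkeeping (in particular the $-N_s$ in $d_3\Psi_{r,s,s}$, which produces the exponent $-m_j$) and the remark that the values $h(\Phi_{i,i})$ are irrelevant in the converse direction are both right, so the proof is complete as written.
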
 
\subsection{The second main result}
In \cite{indicator} , the authors gave a criterion when a twisted quantum double with an abelian cocycle to be genuine.
\begin{lemma}[\cite{indicator} Theorem 4.1, Lemma 4.5]
	Let $G$ be a finite abelian group, and $\omega$ a normalized  abelian $3$-cocycle of $G$. Then $\DG$ is a genuine quasi-Hopf algebra if, and only if $\omega' \in Z^3(\Gamma^{\omega}, \mathbb{C}^{\times})$ is a nontrivial $3$-cocycle of $\Gamma^{\omega}$, where $\omega' \in  Z^3(\Gamma^{\omega},\mathbb{C}^{\times})$ is the inflation of $\omega^{-1}$ along the above map $\Gamma^{\omega} \longrightarrow G$.
\end{lemma}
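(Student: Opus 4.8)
The plan is to show that, because $\omega$ is abelian, $\DG$ is nothing but the group algebra of $\GO$ equipped with an associator coming from $\omega$, and then to read genuineness off the cohomology class of that associator. First I would record, using the two structural results above (\cite{Abcocycle} Corollary 3.6 and Proposition 3.8), that $\DG$ is commutative and has the group of group-likes $\GO$ as a basis, with group-like comultiplication $\Delta(u)=u\otimes u$ for every $u\in\GO$. Thus, as a quasi-Hopf algebra, $\DG$ is the group algebra $\vmathbb{k}[\GO]$ with the group-like comultiplication and the associator $\phi=\sum_{g,h,k\in G}\omega(g,h,k)^{-1}\,e(g)\otimes 1\otimes e(h)\otimes 1\otimes e(k)\otimes 1$ inherited from the definition of the twisted double. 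Re-expressing each $e(g)\otimes 1$ in the group-like basis $\{\sigma_{\tau}(\alpha,1):\alpha\in\widehat{G}\}$ exhibits $\phi$ as an invertible element of $\vmathbb{k}[\GO]^{\otimes 3}$ whose pentagon identity becomes the $3$-cocycle condition for a $\CC$-valued $3$-cochain on $\GO$.

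Second, I would prove the general principle that such a quasi-Hopf algebra is genuine precisely when its associator is cohomologically nontrivial. The crucial point is the rigidity of the gauge freedom: any twist $F$ is, by definition, an invertible counit-normalized element of $\DG\otimes\DG=\vmathbb{k}[\GO]\otimes\vmathbb{k}[\GO]$, so $F=\sum_{a,b\in\GO}F(a,b)\,a\otimes b$ for a normalized $\CC$-valued $2$-cochain $F$ on $\GO$. Because $\vmathbb{k}[\GO]$ is commutative and the comultiplication is group-like (hence cocommutative), twisting by $F$ leaves the comultiplication unchanged and multiplies the associator by the $3$-coboundary $\delta F$. Consequently every quasi-Hopf algebra gauge equivalent to $\DG$ again has underlying algebra $\vmathbb{k}[\GO]$ with group-like comultiplication, and $\DG$ can be untwisted to a genuine Hopf algebra (trivial associator) if and only if its associator is a coboundary; that is, $\DG$ is genuine if and only if the class of its associator in $H^3(\GO,\CC)$ is nontrivial.

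Third, I would identify that class with $[\omega']$, the inflation of $\omega^{-1}$ along the surjection $\pi\colon\GO\to G$ of the central extension $\eqref{(3.4)}$. Using the trivializing $1$-cochains $\tau_g$ with $\delta\tau_g=\omega_g$ (which exist because each $\omega_g$ is a coboundary), I would apply the canonical gauge transformation that brings the Fourier-type expression of $\phi$ into the pulled-back diagonal form $u_1\otimes u_2\otimes u_3\mapsto \omega^{-1}(\pi u_1,\pi u_2,\pi u_3)$. Here the self-duality of the finite abelian group $\GO$, together with the extension $1\to\widehat{G}\to\GO\to G\to 1$ and its $2$-cocycle $\beta$, converts the associator into the inflation along the quotient map to $G$. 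Combined with the second step this gives exactly the asserted criterion: $\DG$ is genuine if and only if $\omega'=\pi^{*}(\omega^{-1})$ is a nontrivial $3$-cocycle on $\GO$.

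The hard part will be this last identification of the cohomology class. The associator of the double presents itself as (a Fourier transform of) $\omega^{-1}$ concentrated on the subgroup $\widehat{G}=\ker\pi$, whereas the inflation $\omega'$ is constant on $\widehat{G}$-cosets and lives in the $G$-direction; reconciling the two requires the careful use of the cochains $\tau_g$, the explicit $2$-cocycle $\beta$ of the extension, and the self-duality of $\GO$, and this cohomological bookkeeping is where essentially all of the work resides. The reduction carried out in the first two steps, by contrast, is formal once commutativity and the group-like basis are in hand.
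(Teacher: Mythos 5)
First, note that the paper does not prove this lemma at all: it is imported verbatim from \cite{indicator} (Theorem 4.1, which detects genuineness via total Frobenius--Schur indicators, combined with Lemma 4.5), so your direct twisting argument is not being measured against an in-paper proof but against correctness on its own terms. And there it has a genuine flaw: your steps 1 and 2 confuse the group-like basis of $\vmathbb{k}[\GO]$ with the idempotent basis. Multiplication in the group algebra $\vmathbb{k}[\GO]$ is \emph{convolution} of coefficient functions, so for $\phi=\sum_{a,b,c\in\GO}\hat{\phi}(a,b,c)\,a\otimes b\otimes c$ the pentagon identity is a convolution identity, not the pointwise $3$-cocycle condition for $\hat{\phi}$; likewise an invertible $F\in\vmathbb{k}[\GO]^{\otimes 2}$ need not have nowhere-vanishing coefficients, and twisting does not multiply $\hat{\phi}$ pointwise by $\delta F$. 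Indeed your own last-paragraph observation kills the setup: since $e(g)\otimes 1\in\operatorname{span}(\widehat{G}\otimes 1)$, the coefficient function of $\phi$ in the group-like basis is supported on $\widehat{G}^{3}\subsetneq\GO^{3}$, so it is not even a $\mathbb{C}^{\times}$-valued cochain on $\GO$. The pointwise dictionary you want holds only in the dual picture: $\DG\cong\vmathbb{k}[\GO]$ is the group algebra of a finite abelian group, its simple modules are the characters $\widehat{\GO}$, hence $\operatorname{Rep}(\DG)\cong\operatorname{Vec}_{\widehat{\GO}}^{\eta}$, and your (correct) principle ``genuine $\Leftrightarrow$ associator class nontrivial, since commutativity fixes $\Delta$ and twists change the class by exactly coboundaries'' must be stated in $H^{3}(\widehat{\GO},\mathbb{C}^{\times})$, not $H^{3}(\GO,\mathbb{C}^{\times})$.

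Once set up correctly, no ``canonical gauge transformation'' is needed to diagonalize $\phi$: because $G$ is abelian, the $e(g)\otimes 1$ are central orthogonal idempotents, each character $\lambda\in\widehat{\GO}$ restricts on $\operatorname{span}\{e(g)\otimes 1\}$ to evaluation at a unique $g_{\lambda}\in G$, this assignment is the surjection $p:\widehat{\GO}\twoheadrightarrow G$ dual to $\widehat{G}\hookrightarrow\GO$, and $\phi$ acts on $V_{\lambda_{1}}\otimes V_{\lambda_{2}}\otimes V_{\lambda_{3}}$ by $\omega^{-1}(g_{\lambda_{1}},g_{\lambda_{2}},g_{\lambda_{3}})$, i.e.\ $\eta=p^{*}(\omega^{-1})$ on the nose. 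The decisive step your sketch leaves unproven is the bridge from $p^{*}(\omega^{-1})$ on $\widehat{\GO}$ to the asserted $\pi^{*}(\omega^{-1})$ on $\GO$: one needs an isomorphism $\GO\cong\widehat{\GO}$ \emph{over} $G$, i.e.\ a self-duality of the central extension $1\to\widehat{G}\to\GO\to G\to 1$ intertwining $\pi$ with $p$, which has to be extracted from the explicit $2$-cocycle $\beta(x,y)(g)=\frac{\tau_x(g)\tau_y(g)}{\tau_{xy}(g)}\omega_g(x,y)$ (this is where the abelianness of $\omega$ enters essentially). Merely invoking ``self-duality of the finite abelian group $\GO$'' does not suffice, since an abstract isomorphism $\GO\cong\widehat{\GO}$ need not respect the projections to $G$, and inflation classes are sensitive to exactly that. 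So: your overall strategy is a legitimate alternative to the indicator-based proof in \cite{indicator}, but as written step 1--2 would fail on the convolution/pointwise issue, and the class identification in step 3 --- which you correctly flag as where all the work resides --- is missing rather than carried out.
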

Now it suffices to  determine whether $\omega'$ is nontrivial on $\GO \cong Z_{(2a,m)} \times Z_{\frac{m^2}{(2a,m)}} $ or not.
Obviously, if $\omega'$ is nontrivial on $Z_{\frac{m^2}{(2a,m)}} $, then $\omega'$ will be nontrivial on $\GO$. Hence we may consider this condition at first. 
\begin{prop}\label{prop 4.8}
	Let $G \cong Z_m$ be a finite cyclic group and $\omega(g^i,g^j,g^k)= \zeta_m^{ai[\frac{j+k}{m}]}$  for $ 1\leq a < m$. If 
	$(m,2a) \nmid (m,a)$, then $\omega'$ is nontrivial on $\GO$.
\end{prop}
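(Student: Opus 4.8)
The plan is to reduce the statement to the cyclic direct factor of $\GO$ and then apply the coboundary criterion of Lemma \ref{lem 3.17}. By the discussion immediately preceding the proposition it suffices to show that $\omega'$ restricts to a nontrivial class on the cyclic subgroup $\langle t\rangle=\langle\stg\rangle$, where $t=\stg$ has order $\frac{m^2}{(m,2a)}$; this is exactly the factor $Z_{\frac{m^2}{(2a,m)}}$ appearing in the structure of $\GO$. The reduction is justified by functoriality of restriction in group cohomology: if $\omega'$ were a $3$-coboundary on $\GO$, then so would be its restriction to $\langle t\rangle$. Set $N:=\frac{m^2}{(2a,m)}$, so that $\langle t\rangle\cong Z_N$ is generated by $g_1:=t$.

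Since $\omega'$ is the inflation of $\omega^{-1}$ along $\pi\colon\GO\to G$ and $\pi(\stg)=g$, the restriction is computed directly as
$$\omega'(t^{i},t^{j},t^{k})=\omega^{-1}(g^{i},g^{j},g^{k})=\zeta_m^{-a\,\bar i\,[\frac{\bar j+\bar k}{m}]},$$
where $\bar i$ denotes the residue of $i$ modulo $m$ in $\{0,\dots,m-1\}$. Next I would invoke Lemmas \ref{lem3.16} and \ref{lem 3.17} for the cyclic group $Z_N$ with its single generator $g_1=t$. In the single-generator case there are no indices $i<j$ and no triples $r<s<t$, so the criterion collapses to the single condition $F_3^{*}(\omega'|_{\langle t\rangle})(\Psi_{1,1,1})=1$. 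Using the formula for $F_3$, namely $F_3(\Psi_{1,1,1})=\sum_{l=0}^{N-1}[g_1,g_1^{l},g_1]$, and passing from sums to products under $\operatorname{Hom}(-,\vmathbb{k}^{\times})$, this invariant equals
$$F_3^{*}(\omega'|_{\langle t\rangle})(\Psi_{1,1,1})=\prod_{l=0}^{N-1}\omega'(t,t^{l},t)=\prod_{l=0}^{N-1}\omega^{-1}(g,g^{l},g).$$

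The heart of the argument is then the evaluation of this product. Each factor $\omega^{-1}(g,g^{l},g)=\zeta_m^{-a[\frac{\bar l+1}{m}]}$ equals $\zeta_m^{-a}$ precisely when $\bar l=m-1$ and equals $1$ otherwise; as $l$ runs over $\{0,\dots,N-1\}$ with $N=m\cdot\frac{m}{(2a,m)}$, there are exactly $\frac{m}{(2a,m)}$ such values of $l$. Hence the invariant is $\zeta_m^{-a\,m/(2a,m)}$, which is trivial if and only if $m\mid a\cdot\frac{m}{(2a,m)}$, i.e. if and only if $(m,2a)\mid a$. Since $(m,2a)\mid m$ always holds, $(m,2a)\mid a$ is equivalent to $(m,2a)\mid(m,a)$. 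Therefore the hypothesis $(m,2a)\nmid(m,a)$ forces the invariant to be a nontrivial root of unity, so $\omega'|_{\langle t\rangle}$ is a nontrivial class and $\omega'$ is nontrivial on $\GO$. I expect the only delicate point to be the bookkeeping in this product — correctly counting the residues $l\equiv m-1\pmod m$ in the range $0\le l<N$ and tracking the interplay between $\zeta_m$ and the order $N$ of $t$ — while the cohomological inputs (functoriality of restriction and the single-generator specialization of Lemmas \ref{lem3.16}–\ref{lem 3.17}) are then routine.
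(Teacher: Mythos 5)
Your proof is correct and follows essentially the same route as the paper: restrict $\omega'$ to the cyclic subgroup $\langle\stg\rangle\cong Z_{m^2/(2a,m)}$, identify this restriction with the pullback of $\omega^{-1}$ along $\stg\mapsto g$, and use Lemmas \ref{lem3.16} and \ref{lem 3.17} to reduce nontriviality to the single invariant $F_3^*(\Psi_{1,1,1})=(\zeta_m^{-a})^{m/(2a,m)}$, concluding by gcd arithmetic. Your more explicit counting of the residues $l\equiv m-1\pmod m$ and the equivalent reformulation $(m,2a)\mid a$ instead of the paper's $\frac{m}{(m,a)}\mid\frac{m}{(m,2a)}$ are only cosmetic differences.
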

\begin{proof}
	Since $\GO$ is the extension of $G$ by $\widehat{G}$. there is a obvious group surjection 
	\begin{align*}
		\pi:  \GO  \longrightarrow Z_{m}: \ &\stx^j \mapsto 1, \\
		&\stg^i \mapsto g^i. 
	\end{align*}
	Hence $\pi^*(\omega^{-1})$ will actually be the restriction of $\omega'$ to $Z_{\frac{m^2}{(2a,m)}}$.\par 
	To show $\pi^*(\omega^{-1})$ is nontrivial, it suffices to show $F_3^*(\pi^*(\omega^{-1}))(\Psi_{1,1,1})$ not equals to $1$ by Lemmas \ref{lem3.16} and \ref{lem 3.17}. By definition of $F_3$, 
	\begin{align*}
		F_3^*(\pi^*(\omega^{-1}))(\Psi_{1,1,1})&=\pi^*(\omega^{-1})(\sum_{l=0}^{\frac{m^2}{(2a,m)}-1}[\stg,\stg^l,\stg])\\
		&=\omega^{-1}(\sum_{l=0}^{\frac{m^2}{(2a,m)}-1}[g,g^l,g])\\
		&=(\zeta_m^{-a})^{\frac{m}{(2a,m)}}.
	\end{align*}
	Note that $(\zeta_m^{-a})^{\frac{m}{(2a,m)}}=1$ if and only if $ \frac{m}{(m,a)}\mid \frac{m}{(m,2a)}$, that is, $(m,2a)$ should divide $(m,a)$. Hence if 
	$(m,2a) \nmid (m,a)$, then $\omega'$ is nontrivial on $Z_{\frac{m^2}{(2a,m)}} $, hence on $\GO$.
\end{proof}
The necessity of Theorem \ref{thm 3.19} is obvious by Proposition \ref{prop 4.8}, since $\omega'$ is a $3$-coboundary on $\GO$ will imply $(m,2a) \mid (m,a)$.\par 
Now we need to deal with the case $(m,2a)\mid (m,a)$. Unfortunately, it is difficult to write down the explicit generator of $Z_{(2a,m)}$. We avoid this difficulty via the following result. By \cite{FQQR2} Lemma 2.16, it's harmless to assume $Z_{(2a,m)}=\langle \stx\cdot \stg^b\rangle=\langle \sigma_{\tau}(\chi,g^b)\rangle$ for $ 0\leq b \leq (2a,m)$. Note that this assumption requires 
$$ m \mid b(2a,m), \ \ \ \text{and} \ \ \ m\mid (2a,m)+2a[\frac{b(2a,m)}{m}].$$ since $\stxg^{(2a,m)}=1$.
All preparations have been done and we are going to  prove Theorem \ref{thm 3.19}.
\begin{proof}[Proof of Theorem \ref{thm 3.19}]
	We only need to show $\omega'$ is a $3$-coboundary on $\GO$ if $(m,2a) \mid (m,a)$.  For consistency, we regard $\stg$ as the first generator and $\stxg$  the second generator. We have already shown $F_3^*(\pi^*(\omega^{-1}))(\Psi_{1,1,1})=1$. The remaining is to verify the condition in Lemma \ref{lem 3.17}. By direct computations, we have 
	\begin{align*}
		F_3^*(\pi^*(\omega^{-1}))(\Psi_{2,2,2})&=\pi^*(\omega^{-1})(\sum_{l=0}^{(2a,m)-1}[\stxg,\stxg^l,\stxg]\\
		&=\omega^{-1}(\sum_{l=0}^{(2a,m)-1}[g^b,(g^b)^l,g^b])\\
		&=\prod_{l=0}^{(2a,m)-1}(\zeta_m^{-a})^{b[\frac{(bl)'+b}{m}]}.
	\end{align*}
	 We have  $m\mid ab $ since $(2a,m)\mid (m,a)$,$ (m,a) \mid a$ and $m \mid (2a,m)b$ by assumption, thus  $F_3^*(\pi^*(\omega^{-1}))(\Psi_{2,2,2})=1$.\par 
	Next we are going to compute 	$F_3^*(\pi^*(\omega^{-1}))(\Psi_{1,2,2})$ and $F_3^*(\pi^*(\omega^{-1}))(\Psi_{1,1,2})$.
	We have
	\begin{align*}
		F_3^*(\pi^*(\omega^{-1}))&(\Psi_{1,1,2})\\
		&=\pi^*(\omega^{-1})(\sum_{l=0}^{\frac{m^2}{(2a,m)}-1}[\stg^l,\stg,\stxg]-[\stg^l,\stxg,\stg]\\&+[\stxg,\stg^l,\stg])\\
		&=\prod_{l=0}^{\frac{m^2}{(2a,m)}-1}\frac{\omega^{-1}(g^{l'},g,g^b)\omega^{-1}(g^b,g^{l'},g)}{\omega^{-1}(g^{l'},g^b,g)}\\
		&=\prod_{l=0}^{\frac{m^2}{(2a,m)}-1}(\zeta_m^{-a})^{b[\frac{l'+1}{m}]}=1. 
	\end{align*}
	since $m\mid ab$ by the analysis above. On the other hand, 
	\begin{align*}
		F_3^*(\pi^*(\omega^{-1}))&(\Psi_{1,2,2})\\ &=\pi^*(\omega^{-1})(\sum_{l=0}^{(2a,m)-1}[\stg,\stxg^l,\stxg]-[\stxg^l,\stg,\stxg]\\&+[(\stxg^l,\stxg,\stg])\\
		&=\prod_{l=0}^{(2a,m)-1}\frac{\omega^{-1}(g,g^{(bl)'},g^b)\omega^{-1}(g^{(bl)'},g^b,g)}{\omega^{-1}(g^{(bl)'},g,g^b)}\\
		&=\prod_{l=0}^{(2a,m)-1}(\zeta_m^{-a})^{[\frac{(bl)'+b}{m}]}.
	\end{align*}
	If $b=0$, then $F_3^*(\pi^*(\omega^{-1}))(\Psi_{1,2,2})=1$. If we take $g_{1,2}=1$, then equation (\ref{3.7}) holds, thus $\omega'$ is a $3$-coboundary. If $b \neq 0$,  then  $(b((2a,m)-1))'+b$ equals $m$ since $m\mid b(2a,m)$ by assumption. Thus $$F_3^*(\pi^*(\omega^{-1}))(\Psi_{1,2,2})=(\zeta_m^{-a})^{\frac{b(2a,m)}{m}}.$$ In this case, take $g_{1,2}=\zeta_m^{\frac{ab}{m}}$. Since $(m,2a) \mid m$ and  $m \mid ab$, we have $$g_{1,2}^{-(2a,m)}=(\zeta_m^{-a})^{\frac{b(2a,m)}{m}}=F_3^*(\pi^*(\omega^{-1}))(\Psi_{1,2,2}),$$
	and 
	$$g_{1,2}^{\frac{m^2}{(2a,m)}}= \zeta_m^{\frac{ab}{m}\frac{m}{(2a,m)}m}=1=F_3^*(\pi^*(\omega^{-1}))(\Psi_{1,1,2}).
	$$ As a result, if $(m,2a) \mid (m,a)$, then $\omega'$ is a $3$-coboundary on $\GO$. Hence $\DG$ is genuine if and only if $(m,2a) \nmid (m,a)$.
\end{proof}
Next we will investigate when  $(m,2a) \nmid (m,a)$ holds. This can provide a more intuitive discrimination.
\begin{thm}
	Let $G \cong Z_m$ be a finite cyclic group and $\omega(g^i,g^j,g^k)= \zeta_m^{ai[\frac{j+k}{m}]}$  for $ 1\leq a < m$. Let $m=2^n \prod_{i}p_i^{a_i}$ and $a=2^{n'} \prod_{j}p_j^{b_j}$ be their prime decomposition, where $n,n' \geq 0$.
	Then $\DG$ is genuine if and only if 
$n' <n$. 
\end{thm}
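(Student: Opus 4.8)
The plan is to deduce this theorem directly from Theorem \ref{thm 3.19}, which already reduces genuineness to the purely arithmetic condition $(m,2a) \nmid (m,a)$. So the entire task becomes translating this gcd-divisibility condition into a statement about the $2$-adic valuations $n = v_2(m)$ and $n' = v_2(a)$, where $v_p$ denotes the $p$-adic valuation. By Theorem \ref{thm 3.19} it suffices to show that $(m,2a)\nmid(m,a)$ holds if and only if $n'<n$.

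First I would recall the standard facts that for positive integers $d$ and $e$ one has $d \mid e$ if and only if $v_p(d) \le v_p(e)$ for every prime $p$, and that $v_p(\gcd(x,y)) = \min\{v_p(x), v_p(y)\}$. Applying these, the divisibility $(m,2a) \mid (m,a)$ holds if and only if $\min\{v_p(m), v_p(2a)\} \le \min\{v_p(m), v_p(a)\}$ for every prime $p$.

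The next step is to dispose of the odd primes. For any odd prime $p$, multiplication by $2$ does not change the $p$-adic valuation, so $v_p(2a) = v_p(a)$ and hence $\min\{v_p(m), v_p(2a)\} = \min\{v_p(m), v_p(a)\}$. Thus the comparison is automatically an equality at every odd prime, and the condition $(m,2a)\mid(m,a)$ is governed entirely by the prime $2$. With $n = v_2(m)$, $n' = v_2(a)$, and $v_2(2a) = n'+1$, it remains only to compare $\min\{n, n'+1\}$ with $\min\{n, n'\}$.

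Finally I would carry out the elementary case analysis. If $n' \ge n$, then both minima equal $n$, so the divisibility holds and $\DG$ is not genuine. If $n' < n$, then $n'+1 \le n$, whence $\min\{n,n'+1\} = n'+1 > n' = \min\{n,n'\}$; the divisibility fails at $p=2$, so $(m,2a) \nmid (m,a)$ and $\DG$ is genuine by Theorem \ref{thm 3.19}. This establishes the equivalence. There is no serious obstacle here: the only point requiring care is that the odd primes appearing in the factorizations of $m$ and $a$ need not coincide, but this is harmless precisely because, as shown above, the whole comparison localizes at the single prime $2$.
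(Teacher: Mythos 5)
Your proposal is correct and takes essentially the same route as the paper: both reduce the statement to Theorem \ref{thm 3.19} and then verify that $(m,2a)\nmid (m,a)$ is equivalent to $n'<n$ by localizing the gcd comparison at the prime $2$ — the paper does this by factoring each gcd into its $2$-part times the common odd part, which is precisely your $p$-adic valuation argument in different notation. No gap; nothing further is needed.
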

\begin{proof}
	Suppose $m=2^n \prod_{i}p_i^{a_i}$ and $a=2^{n'} \prod_{j}p_j^{b_j}$ be their prime decomposition. Then $$(m,2a)=(2^n \prod_{i}p_i^{a_i},2^{n'+1} \prod_{j}p_j^{b_j})=(2^n,2^{n'+1})\cdot(\prod_{i}p_i^{a_i},\prod_{j}p_j^{b_j}).$$ and $$(m,a)=(2^n \prod_{i}p_i^{a_i},2^{n'} \prod_{j}p_j^{b_j})=(2^n,2^{n'})\cdot (\prod_{i}p_i^{a_i},\prod_{j}p_j^{b_j}).$$
	Thus $(m,2a) \nmid (m,a)$ if and only if $(2^n,2^{n'+1}) \nmid (2^n, 2^{n'})$. This is equivalent to  $n' <n$.
\end{proof}
\begin{rmk}
\rm	(i) Note that if $m$ is odd, then $\DG$ will never be genuine for arbitrary $0
\leq a<m$. This conclusion is consistent with the \cite{Abcocycle} Theorem 9.4.\par 
	(ii) According to Proposition \ref{Prop3.5}, if $G$ is cyclic, $\DG$ will never be gauge equivalent to $D(G')$ for arbitrary finite group $G'$ by the theory of categorical Morita equivalence, but $\DG$ may be gauge equivalent to a Hopf algebra by Theorem \ref{thm 3.19}. 
\end{rmk}

\section{ Application to the classification of finite-dimensional coquasi-Hopf algebras}
	The purpose of this section is to give a new proof of Proposition 4.1 in \cite{huang2024classification} through applying our previous observations. It should be emphasized that Proposition 4.1  plays the key role in that paper and the original proof relays on heavy computations. To do that, we firstly prove that all Nichols algebra generated by three pairwise non-isomorphic Yetter-Drinfeld modules over $D_8$ are infinite-dimensional which seems has its independent interest. 
\subsection{Classification of finite-dimensional Nichols algebras generated by irreducible Yetter-Drinfeld modules over $D_8$}
We first recall the basic notation of irreducible Yetter-Drinfeld modules over groups. Let $G$ be a finite group, $\mathcal{O}$ a conjugacy class of $G$ , $s \in \mathcal{O}$ fixed, $(\rho,V)$ an irreducible representation of $G^s$, where $G^s$ is the centralizer of $s$ in $G$. Let $t_1=s,...,t_M$ be a numeration of $\mathcal{O}$ and let $g_i \in G$ such that $g_isg_i^{-1}=t_i$ for all $1 \leq i \leq M$.  Then the corresponding irreducible Yetter-Drinfeld module $M(\mathcal{O},\rho)$ is defined as follows: As a space, it just $\bigoplus_{1\leq i \leq M}g_i \otimes V$. Let $g_iv:=g_i \otimes v \in M(\mathcal{O},\rho)$, $1\leq i \leq M$, $v \in V$. If $v \in V$ and $1\leq i \leq M$, then the coaction and the action of $g\in G$ are given by
$$ \delta(g_iv)=t_i\otimes g_iv, \ \ \ \ \ g\rhd(g_iv)=g_j(\gamma\circ v),$$
where $gg_i=g_j\gamma$ and $\gamma \circ v=\rho(\gamma)(v)$ for some $1\leq j \leq M$,$\gamma \in G^s$. The Yetter-Drinfeld module $M(\mathcal{O},\rho)$ is a braided vector space with braiding given by
$$c(g_iv \otimes g_jw)=t_i\rhd(g_jw)\otimes g_iv=g_h(\gamma\circ v)\otimes g_iv$$
for any $1\leq i,j \leq M$, $v,w \in V$, where $t_ig_j=g_h\gamma$ for unique $h$, $1\leq h\leq M$ and $\gamma \in G^s$.

 Next, we describe the well-known classification result of finite-dimensional Nichols algebras generated by irreducible Yetter-Drinfeld modules over $D_8$. Recall that the dihedral group  $D_8$ is generated by $x$ and $y$ with the following presentation 
$$\left\langle x,y\mid y^2=1=x^4, yxy=x^{-1}\right\rangle $$
 and let $\chi$ be a character of $\left\langle x\right\rangle$ such that $\chi(x)=\omega$ is a primitive $4$-th root of unity.
\begin{lemma}[\cite{D4} Theorem 3.1]
	Let $M(\mathcal{O},\rho)$ be the irreducible Yetter-Drinfeld module over $D_8$ corresponding to a pair $(\mathcal{O}, \rho)$. Assume that its Nichols algebra is finite-dimensional, then $(\mathcal{O}, \rho)$ is one of the following:
 \begin{itemize}
     \item[\emph{(i)}] $(\mathcal{O}_{x^2},\rho)$, where $\rho \in  \widehat{D_8}$ satisfies $\rho(x^2)=1$. 
\item[\emph{(ii)}] $(\mathcal{O}_{x^{h}},\chi^j)$, where $h=1 \ or \ 3$, and $\omega^{hj}=-1$.
\item[\emph{(iii)}] $(\mathcal{O}_{y},\operatorname{sgn} \otimes \operatorname{sgn})$ or $(\mathcal{O}_{y},\operatorname{sgn} \otimes \varepsilon)$, where $\operatorname{sgn} \otimes \operatorname{sgn}$, $ \operatorname{sgn} \otimes \operatorname{sgn} \in \widehat{D_8^y}$, $D_8^y=\left\langle y\right\rangle \oplus \left\langle x^2\right\rangle \cong Z_2 \times Z_2.$
\item[\emph{(iv)}]  $(\mathcal{O}_{xy},\operatorname{sgn} \otimes \operatorname{sgn})$ or $(\mathcal{O}_{xy},\operatorname{sgn} \otimes \varepsilon)$, where $\operatorname{sgn} \otimes \operatorname{sgn}$, $ \operatorname{sgn} \otimes \operatorname{sgn} \in \widehat{D_8^{xy}}$, $D_8^{xy}=\left\langle xy\right\rangle \oplus \left\langle x^2\right\rangle \cong Z_2 \times Z_2.$ 
 \end{itemize}
\end{lemma}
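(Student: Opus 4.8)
The plan is to reduce the entire classification to the known list of finite-dimensional diagonal Nichols algebras of rank at most two, exploiting the special feature that in $D_8$ every conjugacy class is ``commutative''. First I would enumerate the irreducible Yetter-Drinfeld modules $M(\mathcal{O},\rho)$. The conjugacy classes are $\{1\}$, $\mathcal{O}_{x^2}=\{x^2\}$, $\mathcal{O}_x=\{x,x^3\}$, $\mathcal{O}_y=\{y,x^2y\}$ and $\mathcal{O}_{xy}=\{xy,x^3y\}$, with centralizers $D_8$, $D_8$, $\langle x\rangle\cong Z_4$, $\langle y,x^2\rangle\cong Z_2\times Z_2$ and $\langle xy,x^2\rangle\cong Z_2\times Z_2$ respectively. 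Running $\rho$ over the irreducible characters of each centralizer produces the finite list of candidate pairs; note that the $2$-dimensional irreducible representation of $D_8$ occurs only for the two central classes, since the other three centralizers are abelian.

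The structural observation I would establish next is that the elements of each class commute pairwise: $x\cdot x^3=1$, $y\cdot x^2y=x^2=x^2y\cdot y$, and $xy\cdot x^3y=x^2=x^3y\cdot xy$. Hence the support $\mathcal{O}$ of $M(\mathcal{O},\rho)$ generates an abelian subgroup $A=\langle\mathcal{O}\rangle\le D_8$. Because the braiding $c(g_iv\otimes g_jw)=t_i\rhd(g_jw)\otimes g_iv$ invokes only the action of the degrees $t_i\in\mathcal{O}\subseteq A$, the braided vector space underlying $M(\mathcal{O},\rho)$ is that of a Yetter-Drinfeld module over the abelian group $A$; in particular it is of diagonal type. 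Diagonalizing the $A$-action yields an explicit braiding matrix $(q_{ij})$: for one-dimensional $\rho$ the rank is $|\mathcal{O}|\le 2$, while for the $2$-dimensional irreducible representation (supported on $\mathcal{O}_{x^2}$) the braiding is the rank-two scalar braiding $c=\rho(x^2)\,\tau$, where $\tau$ denotes the flip.

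Since the Nichols algebra depends only on the braided vector space, the finite-dimensionality of $B(M(\mathcal{O},\rho))$ is then decided by Heckenberger's classification of finite-dimensional diagonal Nichols algebras in rank one and two. For each candidate I would write down its generalized Dynkin diagram (the diagonal entries $q_{ii}$ together with the edge label $q_{12}q_{21}$) and test it against that list. The class $\{1\}$ gives the ordinary flip, hence the symmetric algebra, and is discarded; for $\mathcal{O}_{x^2}$ finiteness holds exactly when the self-braiding scalar equals $-1$, the exterior-algebra/$A_1\times A_1$ case, singling out the admissible $\rho$ in (i); for $\mathcal{O}_x$ the constraint $\omega^{hj}=-1$ is precisely what forces the diagonal entries $q_{ii}=\omega^{hj}$ to equal $-1$ while making the edge label trivial; and for $\mathcal{O}_y,\mathcal{O}_{xy}$ only the two indicated characters of $Z_2\times Z_2$ survive. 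Reading off these constraints reproduces cases (i)--(iv).

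The main obstacle, and the only part demanding genuine care, is the explicit computation of the braiding matrices: one must track how the $A$-grading interacts with the (generally nontrivial) action of the centralizer when passing to an eigenbasis, computing each $t_i\rhd(g_jw)$ via the factorization $t_ig_j=g_h\gamma$, and then correctly match the resulting rank-two diagram against the finite-type diagrams in Heckenberger's tables. Once the matrices are in hand the verification is purely a matter of comparing a short finite list of root-of-unity data against the known classification.
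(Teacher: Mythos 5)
The paper itself gives no proof of this lemma: it is imported wholesale from \cite{D4}, Theorem 3.1, so there is no internal argument to compare yours against. Judged on its own terms, your strategy is sound and is essentially the standard one for $D_8$: every conjugacy class of $D_8$ consists of pairwise commuting elements, so each $M(\mathcal{O},\rho)$ restricts to a Yetter--Drinfeld module over the abelian subgroup $\langle \mathcal{O}\rangle$ with the \emph{same} underlying braided vector space, which is therefore of diagonal type of rank at most two; finiteness is then decided by the rank $\leq 2$ classification of diagonal braidings. Your predicted outcomes for $\mathcal{O}_{x^h}$, $\mathcal{O}_y$, $\mathcal{O}_{xy}$ are the correct ones (self-braiding $-1$ and trivial edge label, giving the four-dimensional $A_1\times A_1$ Nichols algebras), and the one case needing genuine input from Heckenberger's list --- the odd characters $\chi^{\pm 1}$ on $\mathcal{O}_x$, whose braiding $q_{11}=q_{22}=\pm i$, $q_{12}q_{21}=-1$ is of affine Cartan type and hence infinite --- is exactly the kind of check you describe. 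Note also that the paper itself uses your restriction-to-an-abelian-subgroup trick in the proofs of its Lemmas 5.8 and 5.9, so the method is fully consistent with how these modules are handled there.

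There is, however, one concrete point you failed to notice. Your (correct) criterion for the central class $\mathcal{O}_{x^2}$ is that the self-braiding scalar equal $-1$, i.e.\ $\rho(x^2)=-\operatorname{id}$, which forces $\rho$ to be the unique $2$-dimensional irreducible representation. This does \emph{not} ``single out the admissible $\rho$ in (i)'': item (i) as printed requires $\rho(x^2)=1$, and the representations satisfying that condition are precisely the four $1$-dimensional characters (for which $\rho(x^2)=\rho(x)^2=1$ automatically); those yield the flip braiding, hence an infinite-dimensional symmetric algebra. So a correct execution of your plan actually \emph{contradicts} (i) as stated. The resolution is that (i) contains a typo --- the intended condition is $\rho(x^2)=-1$, as confirmed by the paper's own Remark that $\operatorname{dim}M(\mathcal{O},\rho)=2$ and $\operatorname{dim}\mathcal{B}(\mathcal{O},\rho)=4$ in all listed cases --- and your argument proves the corrected statement. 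A careful write-up must flag this discrepancy rather than assert agreement with the printed text. Beyond that, your proposal defers all braiding-matrix computations; they are routine, but they constitute the actual content of the proof and need to be carried out explicitly, since the entire classification reduces to them.
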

\begin{rmk}
	\rm (i) In all above cases, $
	\operatorname{dim}M(\mathcal{O},\rho)=2$ and $
	\operatorname{dim}{\mathcal{B}}(\mathcal{O},\rho)=4$. \par 
	(ii) It is obviously that $$M(\mathcal{O}_x,\chi) \cong M(\mathcal{O}_{x^3},\chi^3)$$ as irreducible Yetter-Drinfeld modules. Meanwhile, there are isomorphisms of braided vector spaces 
	$$ M(\mathcal{O}_{y},\operatorname{sgn} \otimes \operatorname{sgn}) \cong M(\mathcal{O}_{xy},\operatorname{sgn} \otimes \operatorname{sgn}),$$
	$$M(\mathcal{O}_{y},\operatorname{sgn} \otimes \varepsilon) \cong M(\mathcal{O}_{xy},\operatorname{sgn} \otimes \varepsilon).$$
\end{rmk}
 \subsection{ Nichols algebras  over $D_8$ of rank $3 $  }
In this subsection, we will prove all Nichols algebras generated by three pairwise nonisomorphic Yetter-Drinfeld modules over $D_8$  are infinite-dimensional. Our main ingredient is  generalized Cartan matrix and Heckenberger's classification of finite-dimensional Nichols algebra of rank $\geq 3$. We first recall the definition of the Cartan matrix. We assume $I$ is a finite non-abelian group in this subsection.
\begin{definition}
	Let $\II$ be the Yetter-Drinfeld module category over $I$ and $\theta \in \mathbb{N}$ and $I=\left\lbrace 1,...,\theta\right\rbrace $. For $N=(N_1,N_2,...,N_{\theta})$ where $N_i$ are simple Yetter-Drinfeld module for all $i$, let 
	$$
	a_{i j}^N= \begin{cases}-\infty & \text { if }\left(\operatorname{ad} N_i\right)^m\left(N_j\right) \neq 0 \text { for all } m \geq 0, \\ -\sup \left\{m \in \mathbb{N}_0:\left(\operatorname{ad} N_i\right)^m\left(N_j\right) \neq 0\right\} & \text { otherwise }\end{cases}
	$$
	for all $i \in I$ and $ j\in I \setminus \left\lbrace i\right\rbrace$. Moreover, let $a_{ii}^N=2$ for all $i \in I$. Then $A^N=(a_{ij}^N)_{i,j \in I}$ is called the generalized Cartan matrix of N.
\end{definition}
So far, the classification of finite-dimensional  Nichols algebra in usual Yetter-Drinfeld module category have achieved many progression. In \cite{Hec},
Heckenberger has classified all finite-dimensional Nichols algebra over a non-abelian group of rank $\geq 3$.  Let's give a brief introduction.

\begin{definition} [\cite{Hec} Definition 2.1]
	Let $\theta \in \mathbb{N}$, $M=(M_1,M_2,...,M_{\theta}) \in \II$ with each $M_i$ simple  is called braid-indecomposable if there exists no decomposition $M' \oplus M''$ of $\bigoplus_{i=1}^{\theta} M_i$ with $M', M'' \neq 0$ such that $(\operatorname{id}-\operatorname{c}^2)(M' \otimes M'') = 0$
\end{definition}
\begin{definition} [\cite{Hec} Definition 2.2]\label{def4.13}
	Let $\theta \in \mathbb{N}$, $M=(M_1,M_2,...,M_{\theta}) \in \II$ with each $M_i$ simple. Let $A=(a_{ij})$ be the generalized Cartan matrix of $M$, we say $M$ has a skeleton if:\par 
$\operatorname{(1)}$ for all $1 \leq i \leq \theta$, there exists  $s_i \in \operatorname{supp} M_i$, and $\sigma_i \in \widehat{G^{s_i}}$ such that $M_i \cong M(\mathcal{O}_{s_i}, \sigma_i),$ and \par 
$\operatorname{(2)}$ for all $1\leq i <j\leq \theta$ with $a_{ij}\neq 0$, at least one of $a_{ij}, a_{ji}$ is $-1$.\end{definition}
In this case the skeleton of $M$ is a partially oriented partially labeled loopless graph with $\theta$
vertices with the following properties:\par
$\bullet$ For all $1 \leq i \leq \theta$, the $i$-th vertex is symbolized by $\left|\operatorname{supp} M_i\right|=\operatorname{dim} M_i$ points. If $\operatorname{dim} M_i=1$, then the vertex is labeled by $\sigma_i\left(s_i\right)$. If $\operatorname{dim} M_i=2$ and there is an additional restriction on $p=\sigma_i\left(s_i^{\prime} s_i^{-1}\right)$, where $\operatorname{supp} M_i=\left\{s_i, s_i^{\prime}\right\}$, then the $i$-th vertex is labeled by $(p)$. Otherwise there is no label.
\par $\bullet$ For all $i, j \in\{1, \ldots, \theta\}$ with $i \neq j$ there are $a_{i j} a_{j i}$ edges between the $i$-th and $j$-th vertex. The edge is oriented towards $j$ if and only if $a_{i j}=-1, a_{j i}<-1$.
\par   $\bullet$ Let $1 \leq i<j \leq \theta$ with $a_{i j}<0$. If $\operatorname{supp} M_i$ and $\operatorname{supp} M_j$ commute, then the connection between the $i$-th and $j$-th vertex consists of continuous lines. Otherwise the connection consists of dashed lines. The connection is labeled with $\sigma_i(s_j) \sigma_j(s_i)$ if $\operatorname{dim} M_i=1$ or $\operatorname{dim} M_j=1$, and otherwise it is not labeled.

The next Theorem gives a criterion to determine when $\mathcal{B}(M)\in \II$ is finite-dimensional.
\begin{thm} [\cite{Hec} Theorem 2.5]\label{thm4.3}
	Let $\theta \in \mathbb{N}_{\geq 3}$. Let $I$ be a non-abelian group and $M=(M_1,M_2,...,M_{\theta})$ with each $M_i$ simple and $\operatorname{supp}M$ generates $I$. Assume that $M$ is braid-indecomposable. Then the following are equivalent:\par
$\operatorname{(1)}$  $M$ has a skeleton of finite type.\par
$\operatorname{(2)}$ $\mathcal{\mathcal{B}}(M)$ is finite-dimensional.\par
$\operatorname{(3)}$  $M$ admits all reflections and the Weyl groupoid $\mathcal{W}(M)$ of $M$ is finite.
\end{thm}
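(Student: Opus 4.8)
The plan is to establish the cycle of implications $(1)\Rightarrow(2)\Rightarrow(3)\Rightarrow(1)$, with the reflection machinery for tuples of simple Yetter--Drinfeld modules providing the structural glue and the classification of finite Weyl groupoids providing the decisive combinatorial input. Throughout one works with the braided vector space $\bigoplus_{i} M_i$ and its Nichols algebra $\mathcal{B}(M)$, which, by the root-system theory for semisimple objects of $\II$, carries a PBW-type decomposition indexed by a finite or infinite set of \emph{roots}; the basic principle, which I would invoke repeatedly, is that $\dim\mathcal{B}(M)<\infty$ is equivalent to finiteness of this root set. First I would set up the reflection operators: for each $i$ and each $j\neq i$ the entry $a_{ij}^N$ is governed, by definition, by the least $m$ with $(\operatorname{ad}N_i)^m(N_j)=0$, and when all these are finite one can define a reflected tuple $R_i(M)\in\II$ in the Yetter--Drinfeld category over the \emph{same} group $I$, together with a graded isomorphism relating $\mathcal{B}(M)$ and $\mathcal{B}(R_i(M))$. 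The Weyl groupoid $\mathcal{W}(M)$ is generated by these $R_i$, and $M$ \emph{admits all reflections} when every composite $R_{i_1}\cdots R_{i_k}(M)$ is again defined.

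For $(2)\Rightarrow(3)$ I would argue as follows. If $\dim\mathcal{B}(M)<\infty$, then the root set is finite, which forces every $a_{ij}^N$ to be a finite nonpositive integer; hence each reflection $R_i$ is defined. Using the graded isomorphism above, each $\mathcal{B}(R_i(M))$ is again finite-dimensional, so by induction along words in the $R_i$ the tuple $M$ admits all reflections, and the root system carried by $\mathcal{B}(M)$ is finite, whence $\mathcal{W}(M)$ is finite. This step is comparatively formal once the PBW/root-system framework is granted.

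For $(3)\Rightarrow(1)$ and $(1)\Rightarrow(2)$ the work is classification-theoretic. A finite connected Weyl groupoid of rank $\theta\geq 3$ produces, on each object, a finite Cartan matrix $(a_{ij}^N)$; invoking the Cuntz--Heckenberger classification of finite Weyl groupoids, together with the rigid restriction imposed by the requirement that the braiding come from a \emph{non-abelian} group $I$ with $\operatorname{supp}M$ generating $I$, one pins the datum down to one of an explicit finite list and checks in each case that condition (2) of Definition \ref{def4.13} holds and that the vertex/edge labels are exactly those of a finite-type skeleton, giving $(1)$. Conversely, for $(1)\Rightarrow(2)$ one runs over the finite list of finite-type skeletons of rank $\geq 3$ and exhibits $\mathcal{B}(M)$ explicitly as finite-dimensional, matching each against the known finite-dimensional Nichols algebras of braid-indecomposable non-diagonal type.

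The hard part is precisely this classification input in $(3)\Rightarrow(1)$ and its converse verification in $(1)\Rightarrow(2)$: translating a finite Weyl groupoid into a finite and completely explicit list of admissible skeletons over non-abelian groups, and then confirming finite-dimensionality case by case, is the technical heart of the Heckenberger--Vendramin program and is far from a short computation. By contrast, the reflection and PBW theory underlying $(2)\Rightarrow(3)$ is essentially bookkeeping once the general machinery for semisimple Yetter--Drinfeld modules is in place.
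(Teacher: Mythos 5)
First, a point of comparison: the paper does not prove this statement at all. It is imported verbatim from \cite{Hec} (Theorem 2.5 there) and used as a black box in Section 5, so there is no internal proof to measure your argument against; the only fair comparison is with the literature proof that your outline is implicitly reconstructing.

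Measured against that, your proposal is an accurate map of the architecture but not a proof, and it contains one genuine circularity. The ``basic principle'' you propose to invoke repeatedly --- that $\dim\mathcal{B}(M)<\infty$ is \emph{equivalent} to finiteness of the root set --- is not available as an input for semisimple Yetter--Drinfeld modules over non-abelian groups. The direction ``finite dimension $\Rightarrow$ all $a_{ij}^M$ finite, reflections defined, $\mathcal{W}(M)$ finite'' is indeed essentially formal, by the reflection theory of Andruskiewitsch--Heckenberger--Schneider, and your $(2)\Rightarrow(3)$ is sound on that basis. But the converse direction (finite root data $\Rightarrow$ finite dimension) is, in this generality, exactly the content of $(3)\Rightarrow(2)$, which in the literature is obtained only by running the classification $(3)\Rightarrow(1)\Rightarrow(2)$; taking it as a standing principle assumes the theorem. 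Moreover, the two implications where you correctly locate the difficulty, $(3)\Rightarrow(1)$ and $(1)\Rightarrow(2)$, are in your text reduced to ``invoke the Cuntz--Heckenberger classification of finite Weyl groupoids and check each case'' and ``run over the finite list of finite-type skeletons and exhibit $\mathcal{B}(M)$ as finite-dimensional.'' Those checks --- determining which Cartan data are realizable by braid-indecomposable tuples over non-abelian groups with the prescribed supports and labels, and verifying finite-dimensionality skeleton by skeleton --- \emph{are} the theorem; naming them does not discharge them. So your proposal should be read as a correct survey of the proof strategy of \cite{Hec}, with a genuine gap wherever the decisive classification work is cited rather than carried out, compounded by the circular use of the dimension/root-set equivalence.
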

A complete classification result of skeletons of ﬁnite type with at least three vertices over arbitrary field is  given  simultaneously, see \cite{Hec}.

Let us return to the dihedral group $D_8$ case. There are six nonisomorphic irreducible Yetter-Drinfeld modules over $D_8$. For simplicity, denote
$M_1=M(\mathcal{O}_{x^2},\rho)=\operatorname{span}\left\lbrace 1u_1, 1u_2\right\rbrace $, $M_2=M(\mathcal{O}_{x},\chi)=\operatorname{span}\left\lbrace 1v,yv\right\rbrace $,  
$M_3=M(\mathcal{O}_{y},\operatorname{sgn} \otimes \operatorname{sgn})=\operatorname{span}\left\lbrace 1w_1,xw_1\right\rbrace $,
$M_4=M(\mathcal{O}_{y},\operatorname{sgn} \otimes \varepsilon)=\operatorname{span}\left\lbrace 1w_2,xw_2\right\rbrace $,
$M_5=M(\mathcal{O}_{xy},\operatorname{sgn} \otimes \operatorname{sgn})=\operatorname{span}\left\lbrace 1w_3,xw_3\right\rbrace $, 
$M_6=M(\mathcal{O}_{xy},\operatorname{sgn} \otimes \varepsilon)=\operatorname{span}\left\lbrace 1w_4,xw_4\right\rbrace $.
For simplicity, we denote $S:=\left\lbrace M=(M_i,M_j,M_k) \mid 1 \leq  i < j < k
\leq 6\right\rbrace $.
Now we are going to state the main result of this subsection. Actually, it just results from direct computations.
\begin{thm} \label{thm5.7}
	The Nichols algebra $\mathcal{B}(M)=\mathcal{B}(M_i\oplus M_j \oplus M_k)$ is infinite-dimensional for all $1 \leq  i < j < k
	\leq 6$.
\end{thm}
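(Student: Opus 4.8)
The plan is to run through the $\binom{6}{3}=20$ triples $M=(M_i,M_j,M_k)$ and show $\dim\mathcal{B}(M)=\infty$ in each, organising the work by the subgroup $I_M\le D_8$ generated by $\operatorname{supp}M$. First I would record the supports ($x^2$ for $M_1$; $\{x,x^3\}$ for $M_2$; $\{y,x^2y\}$ for $M_3,M_4$; $\{xy,x^3y\}$ for $M_5,M_6$) and note that, since $y\cdot xy=x^3$, one has $I_M=D_8$ for every triple except $\{M_1,M_3,M_4\}$ and $\{M_1,M_5,M_6\}$, where $I_M\cong Z_2\times Z_2$ is abelian. The automorphism $x\mapsto x,\ y\mapsto xy$ of $D_8$ interchanges the two reflection classes, hence identifies these two triples and preserves Nichols-algebra dimension; together with the isomorphisms recorded in the remark above, this reduces the bookkeeping to a smaller list of genuinely distinct cases.

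The main reduction tool is the subalgebra principle: a sub-Yetter--Drinfeld module $N\subseteq M$ yields a braided Hopf subalgebra $\mathcal{B}(N)\hookrightarrow\mathcal{B}(M)$, so it is enough to find inside each triple a sub-braided-vector-space with infinite-dimensional Nichols algebra. I would therefore first compute, for every pair $(M_a,M_b)$, the two Cartan entries $a_{ab},a_{ba}$ from the braiding $c|_{M_a\otimes M_b}$, using the explicit (co)action formulas recalled at the start of the section; this amounts to analysing the rank-two Nichols algebra $\mathcal{B}(M_a\oplus M_b)$. Every triple containing a pair with $\dim\mathcal{B}(M_a\oplus M_b)=\infty$ is then immediately infinite-dimensional, which removes all triples except those whose three constituent pairs are each finite-dimensional rank-two Nichols algebras over $D_8$.

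For the remaining triples with $I_M=D_8$ I would invoke Heckenberger's Theorem \ref{thm4.3}. The pairwise computation already records the interaction graph, whose edges join $a$ and $b$ exactly when $(\operatorname{id}-c^2)(M_a\otimes M_b)\neq0$, equivalently $a_{ab}\neq0$. A disconnected triple factors as a braided tensor product of smaller Nichols algebras; its interacting sub-pair must then be infinite-dimensional, since otherwise the whole triple would be a tensor product of finite-dimensional factors, so these triples are already covered by the subalgebra principle. The genuinely new cases are therefore braid-indecomposable, and for them I would assemble the generalized Cartan matrix $A^M=(a_{ij})$ and the skeleton of Definition \ref{def4.13} and show it is never of finite type: in each case either some pair with $a_{ij}\neq0$ has neither $a_{ij}$ nor $a_{ji}$ equal to $-1$, so no skeleton exists, or the resulting connected three-vertex labelled diagram does not occur in Heckenberger's finite-type list. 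Either way $\mathcal{B}(M)$ is infinite-dimensional by Theorem \ref{thm4.3}.

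It remains to treat the abelian-support triple $\{M_1,M_3,M_4\}$ (and, by the symmetry above, $\{M_1,M_5,M_6\}$), which lies outside the hypotheses of Theorem \ref{thm4.3}. Restricting the action and coaction to $I_M\cong Z_2\times Z_2$, each $M_\ell$ decomposes as a direct sum of one-dimensional Yetter--Drinfeld modules, so $M$ becomes a braided vector space of diagonal type whose braiding matrix $(q_{ab})$ has all entries in $\{1,-1\}$; I would write out this matrix together with its generalized Dynkin diagram and apply Heckenberger's classification of arithmetic root systems of diagonal type to conclude that the diagram is not of finite type, whence $\dim\mathcal{B}(M)=\infty$. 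The main obstacle throughout is not conceptual but computational: correctly determining each braided adjoint action $(\operatorname{ad}M_i)^m(M_j)$, and hence every entry $a_{ij}$, from the cross-braidings — once the Cartan data are in hand, the conclusion in every case reduces to matching diagrams against the known finite-type classifications.
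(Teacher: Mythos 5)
Your proposal follows essentially the same route as the paper's proof: the paper disposes of the triples $(M_1,M_2,M_k)$ by showing the pair $M_1\oplus M_2$ (support $\langle x\rangle\cong Z_4$, hence diagonal type) already has infinite-dimensional Nichols algebra (Lemma \ref{lem5.8}), treats the two abelian-support triples $(M_1,M_3,M_4)$ and $(M_1,M_5,M_6)$ by restriction to $Z_2\times Z_2$ and the arithmetic-root-system classification (Lemma \ref{lem5.9}), and handles all remaining triples by checking braid-indecomposability pairwise and matching the generalized Cartan matrix and skeleton against Heckenberger's finite-type list via Theorem \ref{thm4.3} (Proposition \ref{prop5.11}), exactly as you outline. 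The one wobble in your write-up, the circular-sounding treatment of disconnected triples, is harmless here because the pairwise computation shows $(\operatorname{id}-c^2)(M_i\otimes M_j)\neq 0$ for every pair, so that case is vacuous.
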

This theorem relies on the following lemmas.  We deal with the cases when $\operatorname{supp}(M)$ is an abelian group at first. In these cases, $M$ can be reduced to a diagonal type  Yetter-Drinfeld module  over $\operatorname{supp}(M)$.
\begin{lemma}\label{lem5.8}
	Let $M=(M_i,M_j,M_k)$, where $1 \leq  i < j < k
	\leq 6$. Suppose $M \in S_1:=\left\lbrace (M_1,M_2,M_k)  \mid  3\leq k \leq 6 \right\rbrace $. Then $\mathcal{B}(M)$ is infinite-dimensional. 
\end{lemma}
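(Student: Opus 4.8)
The plan is to exploit the pair $M_1\oplus M_2$, which is common to every triple in $S_1$. Since $M_1\oplus M_2$ is a Yetter--Drinfeld submodule (indeed a direct summand) of $M=M_1\oplus M_2\oplus M_k$, the Nichols algebra $\mathcal{B}(M_1\oplus M_2)$ is a braided Hopf subalgebra of $\mathcal{B}(M)$. Hence it is enough to show that $\mathcal{B}(M_1\oplus M_2)$ is infinite-dimensional, and this single computation settles all four values $3\le k\le 6$ at once.

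First I would pass to the abelian support. We have $\operatorname{supp}(M_1\oplus M_2)=\{x^2,x,x^3\}\subseteq\langle x\rangle\cong Z_4$, so by the reduction recalled just before the lemma the braided vector space $(M_1\oplus M_2,c)$ is of diagonal type over $Z_4$. Diagonalizing the action of $x$ produces a basis $\{u_+,u_-,1v,yv\}$: the vectors $u_\pm$ span $M_1$ with $\deg u_\pm=x^2$ and $x$-eigenvalues $\omega^{\pm1}$ (because $x^2$ acts by $-1$ on $M_1$, with $\omega$ a primitive fourth root of unity), while $1v,yv$ span $M_2$ with $\deg 1v=x$, $\deg yv=x^3$ and $x$ acting by $-1$ (the value singled out by $\dim\mathcal{B}(M_2)=4$). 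A direct evaluation of the braiding scalars $q_{pq}=\chi_q(\deg p)$ then yields the generalized Dynkin diagram.

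The outcome I expect is that all four diagonal entries equal $-1$, that the two pairs $\{u_+,u_-\}$ and $\{1v,yv\}$ are unlinked, and that the four mixed edges carry the labels $\omega,\omega^3$, so that the diagram is the $4$-cycle $u_+ - 1v - u_- - yv - u_+$. Its generalized Cartan matrix is then the cyclic one, namely the affine Cartan matrix of type $A_3^{(1)}$, which is not of finite type. By Heckenberger's classification of finite-dimensional Nichols algebras of diagonal type --- where finite-dimensionality forces a finite Weyl groupoid, hence a Cartan matrix of finite type at the base point --- the algebra $\mathcal{B}(M_1\oplus M_2)$ is infinite-dimensional, and therefore so is $\mathcal{B}(M)$ for every $k$.

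I expect the main difficulty to be twofold. On the computational side, one must carry out the diagonalization and the braiding-scalar calculation carefully, since the affine diagram surfaces only at full rank: every proper sub-diagram is of finite type (each connected rank-$2$ piece is of type $A_2$ and each rank-$3$ piece of type $A_3$), so the obstruction to finiteness genuinely materializes only at rank $4$ and cannot be detected on two or three generators. On the conceptual side, the delicate point is that an affine (non-finite-type) generalized Cartan matrix already forces infinite dimension even though the braiding is not of Cartan type; this is precisely where Heckenberger's theory, rather than the classical Cartan-type criteria, is indispensable.
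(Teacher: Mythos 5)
Your reduction and computation coincide with the paper's: you pass to $\mathcal{B}(M_1\oplus M_2)$, restrict to the support $\langle x\rangle\cong Z_4$ to get a diagonal braiding, diagonalize the $x$-action on $M_1$, and arrive at exactly the braiding matrix the paper computes (all diagonal entries $-1$, the pairs $\{u_+,u_-\}$ and $\{1v,yv\}$ unlinked, and a $4$-cycle with edge labels alternating $i$ and $-i$). Up to that point the proposal is sound and is the same argument as in the paper.

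The gap is in your final step. You conclude from the generalized Cartan matrix being the affine matrix of type $A_3^{(1)}$ that $\mathcal{B}(M_1\oplus M_2)$ is infinite-dimensional, justifying this by the claim that finite-dimensionality forces a finite Weyl groupoid and hence a Cartan matrix of finite type at the base point. The second implication is false: a finite Weyl groupoid (equivalently, a finite arithmetic root system) does not force the Cartan matrix to be of finite type. The standard counterexample is the family of diagonal braidings of type $D(2,1;\alpha)$, i.e.\ a triangle with all vertices labeled $-1$ and edge labels $q,r,s$ satisfying $qrs=1$: these occur in Heckenberger's classification and have finite-dimensional Nichols algebras, yet their generalized Cartan matrix is the affine matrix of type $A_2^{(1)}$. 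The same phenomenon appears in this very paper: in Proposition \ref{prop5.11} all the triples have Cartan matrix
$\begin{pmatrix} 2 & -1 & -1 \\ -1 & 2 & -1 \\ -1 & -1 & 2 \end{pmatrix}$,
which is affine, and yet the authors must still compare skeletons with Heckenberger's list of finite-type skeletons rather than stop at the Cartan matrix --- precisely because finite-dimensional examples with affine Cartan matrices exist. So "affine Cartan matrix at the base point" is not a valid obstruction. To close the gap you must argue as the paper does: check that the specific labeled generalized Dynkin diagram you obtained (a $4$-cycle with all vertices $-1$ and edges $\pm i$) does not occur in Heckenberger's classification of arithmetic root systems \cite{ARS}; it indeed does not (in the classification the only cycles are the rank-$3$ triangles just mentioned), so the conclusion is correct, but it requires the table lookup, not the Cartan-matrix criterion.
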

\begin{proof}
 Note $\operatorname{supp}(M_1 \oplus M_2)=\left\langle  x\right\rangle \cong Z_4$.  By restriction, $\mathcal{B}(M_1\oplus M_2) \in {^{Z_4}_{Z_4}\mathcal{YD}}$ is of diagonal type. Hence
	we choose a new basis of $M_1$ by setting $t_1=1u_1+i(1u_2)$ and $t_2=1u_1-i(1u_2)$.  Then $M_1 \oplus M_2=\operatorname{span}\left\lbrace t_1,t_2,1v,yv\right\rbrace $.  Direct computation gives the braiding matrix:
\[	\begin{pmatrix}
		-1&-1&1&1 \\
		-1 &-1 &1&1 \\
		-i& i & -1 &-1 \\
		i &-i &-1&-1
	\end{pmatrix}.\]

The corresponding generalized Dynkin diagram is of the form
	\begin{center}
\begin{tikzpicture}
	\draw (0,0) circle (2pt) node[anchor=north]{-1} ;
		\draw (0,2) circle (2pt)node[anchor=south]{-1};
		\draw [black] (4,0) circle (2pt)node[anchor=north]{-1};
		\draw [black] (4,2) circle (2pt) node[anchor=south]{-1};
		\draw[thick, -] (0,0.05).. controls (0,0.1) and (0,1.9) .. node[anchor=south]{}(0,1.95) ;
		\draw (1.5,0.8) circle (0pt) node[anchor=north]{$i$} ;
			\draw (-0.1,1) circle (0pt) node[anchor=east]{$-i$} ;
					\draw (4.1,1) circle (0pt) node[anchor=west]{$-i$} ;
			\draw (3,1.1) circle (0pt) node[anchor=north]{$i$} ;
		\draw[thick, -] (0.03,0.04).. controls (0.03,0.04) and (3.97,1.96) .. node[anchor= east]{}(3.97,1.96) ;
		\draw[thick, -] (0,1.95).. controls (0,1.95) and (4,0.05) .. node[anchor=north]{}(4,0.05) ;
		\draw[thick, -] (4,1.95).. controls (4,1.95) and (4,0.05) .. node[anchor=south]{}(4,0.05) ;
	\end{tikzpicture}
\end{center} 
which does not appear in the classification of  arithmetic root system \cite{ARS}. So $\mathcal{B}(M_1\oplus M_2)$ is infinite-dimensional, hence $\mathcal{B}(M)$ is infinite-dimensional. 

\end{proof}
\begin{lemma}\label{lem5.9}
	If $M\in S_2:=\left\lbrace (M_1,M_3,M_4),\ (M_1,M_5,M_6)\right\rbrace $, then $\mathcal{B}(M)$ is infinite-dimensional.
\end{lemma}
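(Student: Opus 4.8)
The plan is to reuse the strategy of Lemma \ref{lem5.8}: in each of the two triples the three supports already generate an \emph{abelian} subgroup of $D_8$, so after restriction the module becomes of diagonal type, and I can read off a generalized Dynkin diagram and compare it against the list of arithmetic root systems \cite{ARS}.

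First I would record the supports. Since $x^2$ is central, $\operatorname{supp}(M_1)=\{x^2\}$, while $\operatorname{supp}(M_3)=\operatorname{supp}(M_4)=\mathcal{O}_y=\{y,x^2y\}$ and $\operatorname{supp}(M_5)=\operatorname{supp}(M_6)=\mathcal{O}_{xy}=\{xy,x^3y\}$. As $x^2$ is central it commutes with $y$ and with $xy$, so $(M_1,M_3,M_4)$ has support inside $H:=\langle x^2,y\rangle\cong Z_2\times Z_2$ and $(M_1,M_5,M_6)$ has support inside $\langle x^2,xy\rangle\cong Z_2\times Z_2$. Restriction therefore places $M_1\oplus M_3\oplus M_4$ in $\HH$ as a diagonal braided vector space of rank $6$, and since the braiding is unchanged, $\mathcal{B}(M)$ is the same whether computed in $\Df$ or in $\HH$.

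Next I would diagonalize and compute the braiding. On $M_1$ the central element $x^2$ acts by $-1$ while $y$ acts as a reflection, so the $H$-action splits $M_1$ into two eigenlines (diagonalizing $\rho(y)$), just as in Lemma \ref{lem5.8}; the pieces $M_3,M_4$ are already homogeneous for the degrees $y,x^2y$. Evaluating $c(v\otimes w)=(\deg v\rhd w)\otimes v$ on the six resulting basis vectors, I expect every diagonal entry $q_{ii}=-1$ and the nontrivial edges $\widetilde q_{ij}:=q_{ij}q_{ji}=-1$ to organize the six vertices into two disjoint triangles, each consisting of exactly one vector from each of $M_1,M_3,M_4$. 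A useful sanity check along the way is that $M_3\oplus M_4$ by itself yields only two disconnected edges $\circ_{-1}-\circ_{-1}$, a finite-type diagram; this is precisely why the vector from $M_1$ is genuinely needed to close up each triangle.

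With the diagram in hand the conclusion is immediate. A triangle whose three vertices and three edges are all labelled $-1$ carries the affine Cartan matrix $\left(\begin{smallmatrix}2&-1&-1\\-1&2&-1\\-1&-1&2\end{smallmatrix}\right)$ of type $A_2^{(1)}$, which does not occur in the classification of arithmetic root systems \cite{ARS}; hence the rank-$3$ braided subspace spanned by one triangle has infinite-dimensional Nichols algebra, and as it embeds into $\mathcal{B}(M)$, so does the latter. For $(M_1,M_5,M_6)$ I would avoid repeating the computation by invoking the group automorphism $\phi\colon x\mapsto x,\ y\mapsto xy$ of $D_8$: it fixes $x^2$, carries $\mathcal{O}_y$ to $\mathcal{O}_{xy}$ and the pair $(M_3,M_4)$ to $(M_5,M_6)$, so it induces a braided autoequivalence of $\Df$ sending $M_1\oplus M_3\oplus M_4$ to $M_1\oplus M_5\oplus M_6$ and preserving $\dim\mathcal{B}$. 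The only delicate point, and the main obstacle, is the braiding bookkeeping of the third paragraph—keeping the degrees and characters straight so that the edges really assemble into triangles rather than a path or a disconnected graph—after which the affine-$A_2^{(1)}$ obstruction settles both cases.
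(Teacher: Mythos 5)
Your proposal is correct and follows essentially the same route as the paper: the paper likewise restricts $M_1\oplus M_3\oplus M_4$ to the abelian subgroup $\langle x^2,y\rangle\cong Z_2\times Z_2$, diagonalizes $M_1$ via $t_1=1u_1+1u_2$, $t_2=1u_1-1u_2$, and reads off a diagonal braiding whose generalized Dynkin diagram is exactly your two disjoint triangles with all vertex and edge labels $-1$, then excludes it by the classification of arithmetic root systems \cite{ARS}. Your automorphism trick ($x\mapsto x$, $y\mapsto xy$) for transporting the result to $(M_1,M_5,M_6)$ is a clean justification of the step the paper dismisses with ``another case is similar.''
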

\begin{proof}
	We will prove $\mathcal{B}(M)=\mathcal{B}(M_1 \oplus M_3 \oplus M_4)$ is infinite-dimensional, another case is similar. Note that $\operatorname{supp}(M_1 \oplus M_4 \oplus M_5) \cong Z_2 \times Z_2=\left\langle  y \right\rangle \times \left\langle x^2\right\rangle $
	By restriction, $\mathcal{B}(M) \in {^{Z_2\times Z_2}_{Z_2\times Z_2}\mathcal{YD}}$ is of diagonal type, We choose a new basis of $M_1$ via $t_1=1u_1+1u_2$, $t_2=1u_1-1u_2$. Then $M =\operatorname{span}\left\lbrace t_1,t_2,1w_1,xw_1,1w_2,xw_2\right\rbrace $ by direct  computation, the corresponding braiding matrix is 
	\[	\begin{pmatrix}
		-1&-1&1&1 &1 &1\\
		-1 &-1 &-1&- 1&1 &1 \\
		-1& 1 & -1 &1 & -1 &-1 \\
		1&-1 &1&-1 &-1 &-1 \\
		-1& 1 &-1 &1 &-1 &-1 \\
		1& -1 &1 &-1 &-1 &-1
	\end{pmatrix}.\]
	The corresponding generalized Dynkin diagram is of the form
	
		\begin{center}
		\begin{tikzpicture}
			\draw (0,0) circle (2pt) node[anchor=north]{-1} ;
			\draw (2,0) circle (2pt)node[anchor=north]{-1};
					\draw (1,1.5) circle (2pt)node[anchor=south]{-1};
			
			\draw[thick, -] (0.05,0).. controls (0.05,0) and (1.95,0) .. node[anchor=south]{}(1.95,0)  ;
			\draw[thick, -] (0.05,0).. controls (0,0.05) and (1,1.45) .. node[anchor=south]{}(1,1.45)  ;
				\draw[thick, -] (1.95,0).. controls (1.95,0) and (1,1.47) .. node[anchor=south]{}(1,1.47)  ;
			\draw (1,0) circle (0pt) node[anchor=north]{$-1$} ;
			\draw (0.6,1) circle (0pt) node[anchor=east]{$-1$} ;
			\draw (1.4,1) circle (0pt) node[anchor=west]{$-1$} ;
	\draw (3,0) circle (2pt) node[anchor=north]{-1} ;
	\draw (5,0) circle (2pt)node[anchor=north]{-1};
	\draw (4,1.5) circle (2pt)node[anchor=south]{-1};
	
	\draw[thick, -] (3.05,0).. controls (3.05,0) and (4.95,0) .. node[anchor=south]{}(4.95,0)  ;
	\draw[thick, -] (3.05,0).. controls (3,0.05) and (4,1.47) .. node[anchor=south]{}(4,1.47)  ;
	\draw[thick, -] (4.95,0).. controls (4.95,0) and (4,1.47) .. node[anchor=south]{}(4,1.47)  ;
	\draw (4,0) circle (0pt) node[anchor=north]{$-1$} ;
	\draw (3.6,1) circle (0pt) node[anchor=east]{$-1$} ;
	\draw (4.4,1) circle (0pt) node[anchor=west]{$-1$} ;
	
		\end{tikzpicture}
	\end{center} 
which does not  appear in the classification of  arithmetic root system \cite{ARS}. So $\mathcal{B}(M)$ is infinite-dimensional.
\end{proof}
We are going to deal with the cases which not appear in Lemmas \ref{lem5.8} and \ref {lem5.9}. It is obviously that $\operatorname{supp}(M)=D_8$ in these cases. We are going to use Theorem \ref{thm4.3} to show $\mathcal{B}(M)$ are all infinite-dimensional. 
\begin{lemma}
	Suppose $M=(M_i,M_j,M_k)\in S \setminus {S_1 \cup S_2}$,  then $M$ is braid-indecomposable. 
\end{lemma}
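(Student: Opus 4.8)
The plan is to reduce braid-indecomposability of a triple to a statement about its three pairwise double braidings, and then to decide those double braidings from the group structure of $D_8$. Since $M_1,\dots,M_6$ are pairwise non-isomorphic simple objects of the semisimple category $\Df$, every Yetter--Drinfeld submodule of a sum $M_i\oplus M_j\oplus M_k$ is a sub-sum of the three summands; hence a decomposition $M'\oplus M''$ with $M',M''\neq 0$ is exactly a partition of $\{M_i,M_j,M_k\}$ into a singleton and a pair. Because $c^2$ restricts to an endomorphism of each $M_a\otimes M_b$, the operator $\operatorname{id}-c^2$ vanishes on $M'\otimes M''$ iff it vanishes on every $M_a\otimes M_b$ with $M_a,M_b$ in different blocks. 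Calling two summands \emph{linked} when $(\operatorname{id}-c^2)(M_a\otimes M_b)\neq 0$, the triple is therefore braid-indecomposable iff no summand is unlinked from both of the others, i.e.\ iff at least two of the three pairs are linked.

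Next I would record a cheap sufficient condition for being linked. For homogeneous $v\in(M_a)_g$, $w\in(M_b)_h$ the braiding formula gives $c^2(v\otimes w)=(ghg^{-1}\rhd v)\otimes(g\rhd w)$, whose second tensorand has degree $ghg^{-1}$. Thus if some $g\in\operatorname{supp}M_a$ and $h\in\operatorname{supp}M_b$ fail to commute, then $c^2(v\otimes w)$ lies in a bihomogeneous component different from that of $v\otimes w$, so $(\operatorname{id}-c^2)(v\otimes w)\neq 0$ and the pair is linked. I would then tabulate $\operatorname{supp}M_1=\{x^2\}$, $\operatorname{supp}M_2=\{x,x^3\}$, $\operatorname{supp}M_3=\operatorname{supp}M_4=\{y,x^2y\}$, $\operatorname{supp}M_5=\operatorname{supp}M_6=\{xy,x^3y\}$, and observe that the only pairs whose supports commute elementwise are $\{M_1,M_j\}$ (as $x^2$ is central), $\{M_3,M_4\}$ and $\{M_5,M_6\}$ (equal supports); every other pair is automatically linked.

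It then remains to treat the commuting pairs involving $M_1$. Here $M_1$ is the two-dimensional module on which $x^2$ acts by $-1$ and the reflections $y,xy$ act by anti-diagonal matrices interchanging the two weight lines $u_1,u_2$. Hence for any reflection module $M_j$ ($3\le j\le 6$), taking $w$ a generator of the $y$- (resp.\ $xy$-) component one computes $c^2(u_1\otimes w)=(\text{scalar})\,u_2\otimes w$, which is linearly independent from $u_1\otimes w$; so $\{M_1,M_j\}$ is linked regardless of how $x^2$ acts on $M_j$. Consequently the only pairs that can \emph{fail} to be linked are $\{M_3,M_4\}$ and $\{M_5,M_6\}$. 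These two sets are disjoint, so any $3$-element set contains at most one of them; therefore every triple in $S\setminus(S_1\cup S_2)$ has at least two linked pairs, no summand is isolated, and $M$ is braid-indecomposable.

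The only genuine computation is the double braiding for the pairs $\{M_1,M_j\}$ in the last step, and this is where I expect the main (if modest) obstacle to lie, since $M_1$ is $2$-dimensional with a non-diagonal reflection action, so one cannot simply read off a single scalar; everything else is the elementwise-commutation bookkeeping of the second paragraph. Notably, one need not evaluate the double braidings of $\{M_3,M_4\}$ or $\{M_5,M_6\}$ at all, as they are allowed to be trivial without affecting the conclusion.
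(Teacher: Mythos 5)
Your proof is correct, and it takes a genuinely different, less computational route than the paper's. The paper relies on the same initial reduction (braid-indecomposability follows once the summands cannot be split into mutually unlinked blocks, asserted there with ``it is not difficult to observe''), but then verifies by brute force that $(\operatorname{id}-c^2)(M_i\otimes M_j)\neq 0$ for \emph{every} pair $\{i,j\}\neq\{1,2\}$: one sample computation (for $M_1\otimes M_3$) is written out and the remaining witnesses are listed in a table. You instead prove only what the lemma needs: your bidegree argument (non-elementwise-commuting supports force linking) disposes of all pairs except the $M_1$-pairs and the equal-support pairs $\{M_3,M_4\}$, $\{M_5,M_6\}$; a single computation settles the $M_1$-pairs; and the disjointness of $\{M_3,M_4\}$ and $\{M_5,M_6\}$ makes their double braidings irrelevant. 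Your justification of the reduction (multiplicity-one semisimplicity forces every decomposition to be a partition of the summands) is also more explicit than the paper's. What the paper's table buys is the stronger fact that \emph{all} pairs are linked --- e.g.\ $(\operatorname{id}-c^2)(M_3\otimes M_4)\neq 0$ does hold --- which is consistent with, and implicitly reused in, the subsequent Cartan matrix computations; your argument buys economy and robustness against computational slips.

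One slip in your $M_1$ step should be repaired. It is not true that $y$ and $xy$ both act anti-diagonally on the basis $u_1,u_2$ that the paper uses for $M_1$ (the one with $y\rhd u_1=-u_2$): in that basis $xy$ and $x^3y$ act \emph{diagonally} with eigenvalues $\pm 1$. In particular, for $M_6=M(\mathcal{O}_{xy},\operatorname{sgn}\otimes\varepsilon)$, where $x^2$ acts on $M_6$ by $+1$, the element $u_1\otimes w$ with $\deg w=xy$ satisfies $c^2(u_1\otimes w)=u_1\otimes w$ and witnesses nothing. Your claim is correct in the eigenbasis of $x$ (which is what ``weight lines'' should mean): if $x\rhd v=\pm i\,v$, then $x\rhd(h\rhd v)=\mp i\,(h\rhd v)$ for every reflection $h$, so all of $y,x^2y,xy,x^3y$ interchange the two $x$-eigenlines, and then $c^2(v\otimes w)=\lambda\,(h\rhd v)\otimes w$ (with $\lambda=\pm1$ the scalar by which $x^2$ acts on $M_j$) is linearly independent of $v\otimes w$ for every $j\in\{3,4,5,6\}$, regardless of $\lambda$. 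Alternatively, avoid choosing a basis altogether: a reflection never acts as a scalar on the two-dimensional irreducible representation of $D_8$, so for each $h\in\operatorname{supp}M_j$ there exists $v\in M_1$ with $\lambda\,(h\rhd v)\neq v$. With either repair your argument is complete.
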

\begin{proof}
	It is not difficult to observe that as long as for any $i,j$ $1\leq i <j \leq 6$, $(\operatorname{id}-c^2)(M_i \otimes M_j) \neq 0$. Then $M$ is braid-indecomposable for all $M \in S \setminus S_1 \cup S_2$. In particular, we will not consider  braid-indecomposability of $M_1 \oplus M_2$ since $\mathcal{B}(M_1 \oplus M_2)$ is infinite-dimensional by Lemma \ref{lem5.8}. We will compute one case as an example and list  complete situations in the following table.\par 
	We are going to show $M_1 \otimes M_3$ is  braid-indecomposable.
	Choose $1u_1 \otimes 1w_1 \in M_1 \otimes M_3$. Note $\delta(1u_1)= x^2 \otimes 1u_1$ and $\delta(1w_1)=y \otimes 1w_1$. Then
	$c(1u_1 \otimes 1w_1)=x^2\rhd (1w_1) \otimes 1u_1=-1w_1 \otimes 1u_1$, and $c(-1w_1 \otimes 1u_1)=-y \rhd (1u_1)\otimes 1w_1=1u_2\otimes 1w_1$. Hence $$(\operatorname{id}-c^2)(1u_1 \otimes 1w_1)=1u_1\otimes 1w_1-1u_2\otimes 1w_1 \neq 0.$$  
		\begin{table}[htbp]  % 位置选项，‌表示表格可以放在这里、‌这里、‌页顶或页底
			\centering  % 居中显示
			\caption{Braiding-Indecomposability of $M$}  % 表格标题
			\begin{tabular}{|c|c|c|c|}  % 表格内容
				\hline
				$M_i \otimes M_j$ &   $ x\otimes y \in M_i \otimes M_j$,  s.t. $(\operatorname{id}-c^2)(x\otimes y)\neq 0.$& $M_i \otimes M_j$  & $ x\otimes y \in M_i \otimes M_j$,  s.t. $(\operatorname{id}-c^2)(x\otimes y)\neq 0.$ \\ \hline
				$M_1\otimes M_3$ & $1u_1 \otimes 1w_1$ & $M_1 \otimes M_4$ & $1u_1 \otimes xw_2$
				 \\
				\hline
				$M_1\otimes M_5$ & $1u_1 \otimes 1w_3$ & $M_1 \otimes M_6$ & $1u_1 \otimes xw_4$ \\ \hline
				$M_2 \otimes M_3$ & $1v \otimes xw_1$ & $M_2 \otimes M_4$ & $1v \otimes xw_2$ \\ \hline
				$M_2 \otimes M_5$ & $1v \otimes 1w_3$ & $M_2 \otimes M_6$ & $1v \otimes 1w_4$ \\ \hline 
				$M_3 \otimes M_4$ & $1w_1 \otimes xw_2$ & $M_3 \otimes M_5$ & $1w_1 \otimes xw_3$ \\ \hline 
				$M_3 \otimes M_6$ & $1w_1 \otimes xw_4$ & $M_4 \otimes M_5$ & $1w_2 \otimes xw_3$ \\ \hline
				$ M_4 \otimes M_6$ & $1w_2 \otimes xw_4$ & $M_5 \otimes M_6$ & $1w_3 \otimes xw_4$ \\ \hline
			\end{tabular}
		\end{table}
		
\end{proof}
\begin{prop}\label{prop5.11}
	Suppose $M=(M_i,M_j,M_k)\in S \setminus {S_1 \cup S_2}$,  then $\mathcal{B}(M)= \mathcal{B}(M_i \oplus M_j \oplus M_k)$ is infinite-dimensional.
\end{prop}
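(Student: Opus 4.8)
The plan is to apply Heckenberger's criterion, Theorem~\ref{thm4.3}. For any triple $M=(M_i,M_j,M_k)\in S\setminus(S_1\cup S_2)$ we have already observed that $\operatorname{supp}(M)=D_8$, so $\operatorname{supp}(M)$ generates the non-abelian group $I=D_8$; moreover the preceding lemma shows that $M$ is braid-indecomposable, and of course $\theta=3\ge 3$ with each $M_i$ simple. Thus all hypotheses of Theorem~\ref{thm4.3} are satisfied, and $\mathcal{B}(M)$ is finite-dimensional if and only if $M$ has a skeleton of finite type. It therefore suffices to show, for each of the fourteen triples in $S\setminus(S_1\cup S_2)$, that $M$ carries no finite-type skeleton.

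To this end I would compute the generalized Cartan matrix $A^M=(a^M_{rs})$ of each triple. Since $a^M_{rs}=-\sup\{m\in\mathbb{N}_0:(\operatorname{ad} M_r)^m(M_s)\neq 0\}$, the entries are determined pairwise by the rank-two sub-configurations $M_r\oplus M_s$: working in the tensor algebra of $M_r\oplus M_s$ equipped with the explicit braiding $c$ read off from the $D_8$-action and $D_8$-grading given above, one evaluates the iterated braided adjoint action on a basis vector and records the first power $m$ at which it vanishes. This reduces the whole proposition to a finite list of rank-two computations, one for each unordered pair occurring among the fourteen triples.

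With the Cartan entries in hand, a triple can fail to carry a finite-type skeleton in one of two ways, and I would check each triple against both. First, if some pair has $a^M_{rs}\neq 0$ while neither $a^M_{rs}$ nor $a^M_{sr}$ equals $-1$, then by Definition~\ref{def4.13} the module $M$ admits no skeleton at all, hence no finite-type skeleton, so $\mathcal{B}(M)$ is infinite-dimensional. Otherwise $M$ does admit a skeleton, which I would draw as the partially oriented, partially labeled graph on three vertices prescribed after Definition~\ref{def4.13} — recording the dimensions, the scalars $\sigma_r(s_r)$, the edge multiplicities $a^M_{rs}a^M_{sr}$ together with their orientations, and the solid-versus-dashed distinction according to whether $\operatorname{supp}M_r$ and $\operatorname{supp}M_s$ commute — and then compare it with the classification of skeletons of finite type with at least three vertices in \cite{Hec}. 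In every case the resulting graph is absent from that list, so again $\mathcal{B}(M)$ is infinite-dimensional. These fourteen outcomes can be collected compactly in a table, mirroring the braid-indecomposability table above.

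The main obstacle is the computation of the Cartan entries via the iterated braided adjoint action: because the supports of the modules being combined do not commute in $D_8$, the braidings are genuinely non-diagonal, so each $(\operatorname{ad} M_r)^m(M_s)$ must be expanded by hand rather than read off from a diagonal braiding matrix. One should also exploit the shortcut that whenever a rank-two piece already has an infinite-dimensional Nichols algebra, the inclusion $\mathcal{B}(M_r\oplus M_s)\hookrightarrow\mathcal{B}(M)$ settles that triple at once (this covers the pairs with abelian support, handled exactly as in Lemmas~\ref{lem5.8} and~\ref{lem5.9}). Keeping this bookkeeping consistent across all pairs, and correctly reading off the labels and orientations needed to match Heckenberger's finite-type list, is where the real care lies.
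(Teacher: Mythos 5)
Your proposal follows essentially the same route as the paper: invoke the preceding braid-indecomposability lemma, verify $\operatorname{supp}(M)=D_8$, compute the generalized Cartan matrices via iterated braided adjoint actions, form the skeletons (with solid or dashed edges according to whether the supports commute), rule them out against Heckenberger's finite-type classification, and conclude via Theorem \ref{thm4.3}. The paper executes this plan by working one representative case $(M_1,M_3,M_5)$ in detail and finding that every triple in $S\setminus(S_1\cup S_2)$ has Cartan matrix with all off-diagonal entries equal to $-1$ — so a skeleton always exists but falls into one of three triangle-shaped graphs absent from Heckenberger's list — which is exactly the computational bookkeeping your outline defers.
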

\begin{proof}
	We choose some cases to calculate since they are similar. The key is to find out the generalized Cartan matrix for each $M$, then draw the corresponding skeleton and apply the Theorem \ref{thm4.3} finally. \par 
	Take $M=(M_1, M_3,M_5)$,
	we first calculate the number $a^M_{13}$. Note that  $\operatorname{ad}_{1u_1}(1w_1)=1u_1\cdot 1w_1+1w_1\cdot 1u_1$, $\operatorname{ad}_{1u_1}(xw_2)= 1u_1\cdot xw_1+xw_1\cdot 1u_1$, $\operatorname{ad}_{1u_2}(1w_1)=1u_2\cdot 1w_1+1w_1\cdot 1u_2$, and $\operatorname{ad}_{1u_2}(xw_1)=1u_2\cdot xw_1+xw_1\cdot 1u_2$. Then we take coproduct of these elements. 
	\begin{align*}
		\Delta(\operatorname{ad}_{1u_1}(1w_1))&=1\otimes \operatorname{ad}_{1u_1}(1w_1)+\operatorname{ad}_{1u_1}(1w_1)\otimes 1 +1u_1 \otimes 1w_1-1u_2\otimes 1w_1, \\
		\Delta(\operatorname{ad}_{1u_1}(xw_1))&= 1\otimes \operatorname{ad}_{1u_1}(xw_1)+ \operatorname{ad}_{1u_1}(xw_1) \otimes 1+1u_1 \otimes xw_1 +1u_2 \otimes xw_1, \\
			\Delta(\operatorname{ad}_{1u_2}(1w_1))&= 1\otimes \operatorname{ad}_{1u_2}(1w_1)+ \operatorname{ad}_{1u_2}(1w_1) \otimes 1+1u_2 \otimes 1w_1 -1u_1 \otimes 1w_1, \\
			\Delta(\operatorname{ad}_{1u_2}(xw_1))&=1\otimes\operatorname{ad}_{1u_2}(xw_1)+\operatorname{ad}_{1u_2}(xw_1)\otimes 1+1u_2  \otimes xw_1-1u_1 \otimes xw_1.
	\end{align*}
Obviously, $\operatorname{ad}_{1u_1}(1w_1)+\operatorname{ad}_{1u_2}(1w_1)=0$ and $\operatorname{ad}_{1u_1}(xw_1)+\operatorname{ad}_{1u_2}(xw_1)=0 $ by their coproduct. Hence $\operatorname{ad}_{M_1}(M_3)=\operatorname{span} \left\lbrace \operatorname{ad}_{1u_1}(1w_1), \operatorname{ad}_{1u_2}(xw_1)\right\rbrace $. \par 
Next, since $1u_1^2=1u_2^2=1w_1^2=xw_1^2=0$, we have $\operatorname{ad}_{1u_1}(\operatorname{ad}_{1u_1}(1w_1))=1u_1 \cdot (1u_1\cdot 1w_1+1w_1\cdot 1u_1)-x^2 \rhd (1u_1\cdot 1w_1+1w_1\cdot 1u_1) 1u_1=0$ as well as $\operatorname{ad}_{1u_2}(\operatorname{ad}_{1u_2}(xw_1))=0$. Moreover 
\begin{align*}
	\operatorname{ad}_{1u_2}(\operatorname{ad}_{1u_1}(1w_1))&=1u_2\cdot (1u_1\cdot 1w_1+1w_1\cdot 1u_1)-x^2\rhd (1u_1\cdot 1w_1+1w_1\cdot 1u_1)\cdot 1u_2 \\
&	=1u_2\cdot 1u_1 \cdot 1w_1 +1u_2 \cdot 1w_1 \cdot 1u_1-1u_1 \cdot 1w_1 \cdot 1u_2-1w_1 \cdot 1u_1 \cdot 1u_2  \\
&=1u_2\cdot (-1u_2\cdot 1w_1-1w_1\cdot 1u_2)- (-1u_2\cdot 1w_1-1w_1\cdot 1u_2)\cdot 1u_2=0.
\end{align*}
where the last equation we use the fact that $\operatorname{ad}_{1u_1}(1w_1)+\operatorname{ad}_{1u_2}(1w_1)=0$.We can prove 	$\operatorname{ad}_{1u_1}(\operatorname{ad}_{1u_2}(xw_1))=0$ similarly. Thus $\operatorname{ad}^2_{M_1}(M_3)=0$. \par 
Using the same method, we can prove $\operatorname{ad}_{M_1}(M_5) \neq 0$ and  $\operatorname{ad}_{M_3}(M_5) \neq 0$. But $\operatorname{ad}^2_{M_1}(M_5)=\operatorname{ad}^2_{M_3}(M_5)=0$. Hence $M=(M_1,M_3,M_5)$ has Cartan matrix 	 $\begin{pmatrix}
	2 & -1 & -1 \\
	-1 & 2 & -1 \\ 
	-1 & -1 & 2 
\end{pmatrix}$. 
\par It is not surprising that for all $M \in S \setminus S_1 \cup S_2$, the Cartan matrix of $M$ are all $\begin{pmatrix}
	2 & -1 & -1 \\
	-1 & 2 & -1 \\ 
	-1 & -1 & 2 
\end{pmatrix}$, because their Yetter-Drinfeld module structures are similar.  We omit the proof for simplicity.\par 
Although they have the same Cartan matrix, the corresponding skeletons may be different. \par 
If $(i,j,k) \in \left\lbrace (1,3,5), \ (1,3,6), \ (1,4,5), \ (1,4,6)\right\rbrace $, the corresponding skeleton will be the first picture. \par 
If $(i,j,k) \in \left\lbrace (2,3,4), \ (3,4,5), \ (3,4,6), \ (3,5,6),\ (4,5,6), \ (2,5,6)\right\rbrace $, the corresponding skeleton will be the second picture. \par 
If $(i,j,k) \in \left\lbrace (2,3,5), \ (2,3,6), \ (2,4,5), \ (2,4,6)\right\rbrace $, the corresponding skeleton will be the third picture. 
\begin{center}
	\begin{tikzpicture}
		\node (A) at (-2,0) {\textbf{:}};
		\node (B) at (2,0) {\textbf{:}};
		\node (C) at (0,3.5) {\textbf{:}};
		\node (D) at (-8,0) {\textbf{:}};
		\node (E) at (-4,0) {\textbf{:}};
		\node (F) at (-6,3.5) {\textbf{:}};
		\node (G) at (4,0) {\textbf{:}};
		\node (H) at (8,0) {\textbf{:}};
		\node (I) at (6,3.5) {\textbf{:}};
		\draw[-] (A) --node [above ] {} (B);
		\draw[dashed] (A) --node [ right] {} (C);	
		\draw[dashed] (B) --node [above ] {} (C);	
		\draw[-] (D) --node [above ] {} (E);
		\draw[-] (D) --node [ right] {} (F);	
		\draw[dashed] (E) --node [above ] {} (F);
		\draw[dashed] (G) --node [above ] {} (H);
		\draw[dashed] (G) --node [ right] {} (I);	
		\draw[dashed] (H) --node [above ] {} (I);
	\end{tikzpicture}
\end{center}
All  skeletons above don't appear  in \cite{Hec} Figure$2.1$, hence by Theorem \ref{thm4.3}, $\operatorname{dim}(\mathcal{B}(M))=\infty$ for all $M \in S\setminus S_1 \cup S_2$.
\end{proof}
\begin{proof}[Proof of Theorem \ref{thm5.7}]
	It is direct from Lemma \ref{lem5.8}, \ref{lem5.9} and Proposition \ref{prop5.11}.
\end{proof}
	\subsection{An invariant preserved by gauge equivalence}
%For completeness, we will recall the basic notation of coquasi-Hopf algebras and related concepts briefly. But for simplicity, some details will be omitted.\par 
 By definition, a coquasi-Hopf algebra is exactly the dual notion of a Drinfeld’s quasi-Hopf algebra
 \cite{Drinfeld}. One may refer \cite{QQG} Section 2 for explicit definition, examples and related notions such as the category of Yetter-Drinfeld module (which is denoted by $\GG$) over the coquasi-Hopf algebra $(\vmathbb{k}G,\omega )$, the Nichols algebra in this category etc.
Now let $G$ be an abelian group with a nontrivial $3$-cocycle $\omega$ and $H$ is a finite group. Denoting $\GGd$ the full subcategory of $\GG$consisting of all finite-dimensional twisted Yetter-Drinfeld modules. Suppose $F:\  \GGd \longrightarrow \HHd$ is an equivalence of fusion categories. The following results seem well-known, but we can't find suitable reference. For the convenience of readers, we write them out. 
%For the subsequent proof of Theorem \ref{thm1.3}, we want to show $F$ preserves the  dimension of objects as vector spaces and $F$ maps finite-dimensional twisted Nichols algebras to finite-dimensional usual Nichols algebras right now.
\begin{lemma}\label{prop4.8}
	For each $X \in \GGd$, we have the following equations with respect to dimensions:  $$ \operatorname{dim}_{\vmathbb{k}}(X)=\operatorname{FPdim}(X)=\operatorname{FPdim}(F(X))=\operatorname{dim}_{\vmathbb{k}}(F(X)).$$
\end{lemma}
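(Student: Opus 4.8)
The plan is to split the asserted four-term equality into a chain of three, proving the outer two equalities by realizing each category as representations of a semisimple (quasi-)Hopf algebra, and the middle one by invariance of the Frobenius--Perron dimension under tensor equivalence. First I would recall that $\GG$ is tensor equivalent to $\mathcal{Z}(\bVG)$, whose center is braided tensor equivalent to $\operatorname{Rep}(\DG)$ (see \cite{center}); restricting to finite-dimensional objects gives a fusion equivalence $\GGd\cong\operatorname{Rep}(\DG)$, and similarly $\HHd\cong\operatorname{Rep}(D(H))$. Since $\DG$ is a semisimple quasi-Hopf algebra and $D(H)$ is a semisimple Hopf algebra, both $\GGd$ and $\HHd$ are fusion categories, so $\operatorname{FPdim}$ is defined on them.

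The key step I would establish is that for a semisimple quasi-Hopf algebra $A$ over $\vmathbb{k}$, one has $\operatorname{FPdim}(M)=\dim_{\vmathbb{k}}(M)$ for every $M\in\operatorname{Rep}(A)$. For this I would show that $M\mapsto\dim_{\vmathbb{k}}(M)$ descends to a ring homomorphism $d\colon K(\operatorname{Rep}(A))\to\mathbb{R}$: it is additive on short exact sequences, sends the unit object $\vmathbb{k}$ to $1$, and is multiplicative since the tensor product of $A$-modules has as underlying space the tensor product of the underlying vector spaces, so $\dim_{\vmathbb{k}}(M\otimes N)=\dim_{\vmathbb{k}}(M)\,\dim_{\vmathbb{k}}(N)$. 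Because $d$ is positive on every nonzero object and $\operatorname{FPdim}$ is the \emph{unique} ring homomorphism $K(\operatorname{Rep}(A))\to\mathbb{R}$ positive on all objects (the Grothendieck ring being a transitive based ring, cf.\ \cite{tensor}), it will follow that $d=\operatorname{FPdim}$. Taking $A=\DG$ yields $\dim_{\vmathbb{k}}(X)=\operatorname{FPdim}(X)$, and taking $A=D(H)$ yields $\operatorname{FPdim}(F(X))=\dim_{\vmathbb{k}}(F(X))$, the first and last equalities.

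For the middle equality $\operatorname{FPdim}(X)=\operatorname{FPdim}(F(X))$ I would invoke that $\operatorname{FPdim}$ is an invariant of a fusion category: the tensor equivalence $F$ induces an isomorphism of based Grothendieck rings, and $\operatorname{FPdim}$ is determined by the based-ring structure alone (see \cite{tensor}). The one subtlety worth checking carefully is the quasi-Hopf (rather than Hopf) nature of $\DG$ in the previous step: the forgetful functor $\operatorname{Rep}(\DG)\to\operatorname{Vec}$ is only a quasi-tensor functor, the associator induced by $\omega$ being nontrivial, so it is not a fiber functor in the strict sense and $\operatorname{Rep}(\DG)$ need not admit one. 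This does no harm, because the nontrivial associator alters only the associativity isomorphism and not the underlying vector space of a tensor product; hence $\dim_{\vmathbb{k}}$ stays multiplicative and the uniqueness characterization of $\operatorname{FPdim}$ still forces $d=\operatorname{FPdim}$. Everything else is formal.
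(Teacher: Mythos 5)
Your proof is correct and takes essentially the same route as the paper: the same splitting into three equalities, with the outer two obtained from the identifications $\GGd\cong\operatorname{Rep}(D^{\omega}(G))$ and $\HHd\cong\operatorname{Rep}(D(H))$ (in which Frobenius--Perron dimension equals underlying vector space dimension) and the middle one from invariance of $\operatorname{FPdim}$ under tensor equivalences of fusion categories. The only difference is that you spell out, via the uniqueness of the positive character on the Grothendieck ring, the standard argument that the paper simply cites from \cite{tensor}.
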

\begin{proof}
	There's an equivalence of fusion categories:  $\GGd \cong \mathcal{Z}(\VG) \cong  \operatorname{Rep}(D^{\omega}(G))$. By \cite{tensor} Example 5.13.8,  $\operatorname{dim}_{\vmathbb{k}}(X)=\operatorname{FPdim}(X)$ for all $X \in \, \GGd$. \par
	Note $\GGd$ and $\HH_{\operatorname{fd}}$ are all fusion categories and $F$ is a tensor functor, 
	then by \cite{tensor} Proposition 4.5.7
	$$\operatorname{FPdim}_{\GGd}(X)=\operatorname{FPdim}_{\HH_{\operatorname{fd}}}(F(X)).$$
	The last equation $\operatorname{FPdim}(F(X))=\operatorname{dim}_{\vmathbb{k}}(F(X))$ can be obtained via using the same result as the first equation.
\end{proof}
\begin{prop}\label{prop4.7}
	Suppose $F:\  \GGd \longrightarrow \HHd$ is a tensor equivalence. Then $F$ maps a Nichols algebra in $\GGd$ to a Nichols algebra in $\HHd$.
\end{prop}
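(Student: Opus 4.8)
The plan is to transport the intrinsic, presentation-free characterization of a Nichols algebra across the equivalence rather than to chase an explicit formula. Recall that for $V \in \GGd$ the Nichols algebra $\mathcal{B}(V)$ is, up to isomorphism, the unique $\mathbb{N}_0$-graded braided Hopf algebra $R = \bigoplus_{n \ge 0} R(n)$ in the braided category $\GG$ subject to four conditions: (a) $R(0) \cong \vmathbb{k}$ is the unit object; (b) $R(1) \cong V$; (c) $R$ is generated as an algebra by $R(1)$; and (d) the primitives satisfy $P(R) = R(1)$. Each condition is phrased purely in terms of the tensor structure, the braiding, the unit object, and kernels/images — exactly the data a braided tensor equivalence preserves. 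So the strategy is to check that $F(\mathcal{B}(V))$ inherits (a)--(d) with $V$ replaced by $F(V)$, and then to invoke the uniqueness statement to conclude $F(\mathcal{B}(V)) \cong \mathcal{B}(F(V))$.

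First I would deal with the fact that a Nichols algebra need not be finite-dimensional, so a priori $\mathcal{B}(V)$ lives in the ambient category $\GG$ rather than in the fusion category $\GGd$. Since $\GG$ and $\HH$ are the ind-completions of $\GGd$ and $\HHd$, and every object is a direct sum (filtered colimit) of its finitely generated subobjects, the equivalence extends uniquely to an exact braided tensor equivalence $\widetilde{F}\colon \GG \to \HH$ commuting with direct sums. Because $\widetilde{F}$ is additive it carries the grading $R = \bigoplus_n R(n)$ to a grading $\widetilde{F}(R) = \bigoplus_n \widetilde{F}(R(n))$, and because it is monoidal it sends the unit object to the unit object; thus (a) gives $\widetilde{F}(R)(0) \cong \vmathbb{k}$ and (b) gives $\widetilde{F}(R)(1) \cong F(V)$.

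Next I would transport the Hopf structure. As a monoidal functor $\widetilde{F}$ sends the multiplication $m\colon R \otimes R \to R$ to $\widetilde{F}(R) \otimes \widetilde{F}(R) \cong \widetilde{F}(R \otimes R) \xrightarrow{\widetilde{F}(m)} \widetilde{F}(R)$, making $\widetilde{F}(R)$ an algebra object, and dually a coalgebra object; since $\widetilde{F}$ is braided, the braided-bialgebra compatibility and the antipode axiom are preserved, so $\widetilde{F}(R)$ is a braided Hopf algebra in $\HH$. For (c), generation in degree one means the iterated multiplication maps $R(1)^{\otimes n} \to R(n)$ are epimorphisms, and an equivalence preserves epimorphisms and tensor powers, so this passes to $\widetilde{F}$. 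For (d), the primitives form the kernel $P(R) = \ker(\overline{\Delta})$ of the reduced comultiplication, and exactness of $\widetilde{F}$ gives $P(\widetilde{F}(R)) = \ker(\widetilde{F}(\overline{\Delta})) = \widetilde{F}(P(R)) = \widetilde{F}(R(1)) = F(V)$. Hence $\widetilde{F}(\mathcal{B}(V))$ satisfies (a)--(d) for the datum $F(V)$, and uniqueness yields $\widetilde{F}(\mathcal{B}(V)) \cong \mathcal{B}(F(V))$; as each homogeneous component $\mathcal{B}^n(F(V)) \cong F(\mathcal{B}^n(V))$ lies in $\HHd$, this is the assertion.

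The main obstacle is the passage to the ind-completion together with the verification that the extended $\widetilde{F}$ remains exact and braided monoidal, since it is precisely this that legitimizes transporting the grading and the primitive subobject simultaneously; the infinite-dimensional case is the one that matters for the intended application, so it cannot be sidestepped. A secondary but essential subtlety is that preserving the full Hopf structure — not merely the algebra and coalgebra structures — genuinely uses that $F$ is \emph{braided}, because the bialgebra compatibility axiom in a braided category is written with the braiding $c$; a non-braided monoidal equivalence would not suffice. Fortunately the equivalence produced by the gauge (equivalently, categorical Morita) equivalence is braided, so this hypothesis is in force.
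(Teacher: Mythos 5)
Your proof is correct, and it reaches the conclusion by a genuinely different route than the paper. The paper works with the presentation of the Nichols algebra as a quotient of the tensor algebra: after extending $F$ to an equivalence $\widetilde{F}:\GG\to\HH$ (the same step you perform via ind-completion, written there as explicit direct sums of simple objects), it uses the tensor structure of $F$ and the triviality of the associator on the target to identify $\widetilde{F}(T_{\omega}(V))\cong T(F(V))$, notes that $\widetilde{F}(I)$ is then a graded Hopf ideal of $T(F(V))$ concentrated in degrees at least $2$, so that $\widetilde{F}(I)\subseteq J$ by maximality of the ideal $J$ defining $\mathcal{B}(F(V))$, and then closes the sandwich by applying $\widetilde{F}^{-1}$ to get $\widetilde{F}^{-1}(J)\subseteq I$, forcing $\widetilde{F}(I)=J$. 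You bypass tensor algebras and ideals altogether by invoking the axiomatic characterization of the Nichols algebra (connected graded braided Hopf algebra generated in degree one whose primitives are exactly the degree-one part) and checking that the equivalence preserves each axiom; this is cleaner and isolates exactly what is used, at the price of leaning on the uniqueness statement, which must be available in the twisted category $\GG$ as well as in $\HH$ --- true, but the paper's own definition of the twisted Nichols algebra is the maximal-ideal one, so its sandwich argument is more self-contained relative to its definitions. One further point in your favor: both arguments really do need $F$ to respect the braiding (the comultiplication of $T_{\omega}(V)$ is an algebra map into a braided tensor product, so a merely monoidal equivalence need not carry braided Hopf algebras in $\GG$ to braided Hopf algebras in $\HH$); the paper's statement assumes only a tensor equivalence and its proof is silent on this point, whereas you flag it explicitly and note that the equivalence arising in the intended application is braided.
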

\begin{proof}
	Although we deal with finite-dimensional Nichols algebras, but by definition they are quotient of infinite-dimensional objects in $\GG$, so we should extend $F$ to $\GG$ at first.\par  For all $X,Y \in \GGd$, define $\widetilde{F}(X)=F(X)$ and $\widetilde{F}(f)=F(f):F(X)\rightarrow F(Y)$. Now suppose $X \in \GG$ but $X \notin \GGd$. Since $\GG$ and $\HH$ are semisimple categories and the Grothendieck ring of both categories are finite. $X$ will be direct sum of simple objects $X=\bigoplus\limits_{i \in I}X_i$, where $X_i$ are simple objects.  Each $X_i$ belongs to  $\GGd$, because all simple objects in $\GG$ are finite-dimensional.
	Then $\widetilde{F}(X)$ may be defined via 
	$$ \widetilde{F}(X):= \bigoplus_{i \in I} F(X_i).$$
	For $f: X\rightarrow Y$ be a morphism in $\GG$, $\widetilde{F}(f)$ may be defined as $\bigoplus\limits_{(i,j)\in I\times I}\operatorname{Hom}(X_i,Y_j)$. Obviously, $\widetilde{F}$ preserves composition of morphisms, identity morphism and finite direct sums. Thus $\widetilde{F}$ is an additive functor. Now we are going to show $\widetilde{F}$ is a tensor functor. We have the following isomorphism: 
	\begin{align*}
		\widetilde{F}(X\otimes Y)\cong \widetilde{F} (\underset{i\in I}{\oplus}X_i \otimes\underset{j\in I}{\oplus}Y_j) & \cong \widetilde{F}(\underset{i\in I}{\oplus}\underset{j\in I}{\oplus} X_i \otimes Y_j) 
		\\ &= \underset{i\in I}{\oplus}\underset{j\in I}{\oplus}F(X_i\otimes Y_j)\cong \underset{i\in I}{\oplus}\underset{j\in I}{\oplus} F(X_i)\otimes F(Y_j) \cong \widetilde F(X)\otimes \widetilde{F}(Y).
	\end{align*}
	and $F(\vmathbb{k}) \cong \vmathbb{k}$. The tensor structure $\widetilde{F}_{X,Y}: \widetilde{F}(X\otimes Y)\overset{\cong}{\rightarrow} \widetilde{F}(X)\otimes \widetilde{F}(Y)$ satisfies hexagon diagrams since the tensor structure $J$ of $F$ does. Thus $\widetilde{F}$ is a tensor functor.  Moreover, since $F$ is a tensor equivalence, it has an inverse functor $F^{-1}: \HHd \rightarrow \GGd$. Using this functor with similar method, we can prove $\widetilde{F}$ is a tensor equivalence as well. \par 
	Now let $V \in \GGd$ be finite-dimensional and $J_{V,V}: F(V\otimes V) \cong F(V) \otimes F(V)$ the tensor structure of $F$.  Using  $J$ repeatedly, we have
$$F(V^{\overrightarrow{\otimes n}}) \cong F(V)^{\overrightarrow{\otimes n}}$$ for each $n\in \mathbb{N}$. Here by definition, $V^{\otimes \overrightarrow{n} }=\left( \cdots\left( \left( V \otimes V\right) \otimes V\right) \cdots \otimes V\right). $
The associative constraint in $\HHd$ is trivial, hence $F(V)^{\overrightarrow{\otimes n}} \cong F(V)^{\otimes n}$. Thus we have $\widetilde{F}(\bigoplus\limits_{n \in \mathbb{N}}V^{\overrightarrow{\otimes n}})\cong \bigoplus\limits_{n \in \mathbb{N}}F(V^{\overrightarrow{\otimes n}})\cong \bigoplus\limits_{n \in \mathbb{N}}F(V)^{\otimes n}$. That is $\widetilde{F}(T_{\omega}(V)) \cong T(F(V))$.
\par  Recall that the finite-dimensional twisted Nichols algebra in $\GG$ are of the form $T_{\omega}(V)/I$ where $V \in \GGd$ and $I$ is the unique maximal graded Hopf ideal in $T_{\omega}(V)$ generated by homogeneous elements of degree greater than or equal to $2$.  $\widetilde{F}$ is exact since $\GG$ is semisimple, then using $\widetilde{F}(T_{\omega}(V)) \cong T(F(V))$, we have
$$\widetilde{F}(T_{\omega}(V)/I) \cong T(F(V))/\widetilde{F}(I),$$ which is finite-dimensional as well.
Here $\widetilde{F}(I)$ is a homogeneous Hopf ideal of degree greater than or equal to $2$ of $T(F(V))$. Note that Nichols algebra generated by $F(V)$ must be of the form $T(F(V))/J$, where $J$ is the unique maximal homogeneous graded Hopf ideal of $T(F(V)) \in \HH$ with degree greater than or equal to $2$.  Hence   $ \widetilde{F}(I) \subset J$ and $T(F(V))/J \subset T(F(V))/\widetilde{F}(I)$.
On the other hand, $\widetilde{F}^{-1}: \HH \longrightarrow \GG$ is inverse of $\widetilde{F}$, which is an exact tensor functor. Hence
$$\widetilde{F}^{-1}(T(F(V))/J) \cong T_{\omega}(F^{-1}(F(V))/\widetilde{F}^{-1}(J)\cong T_{\omega}(V)/\widetilde{F}^{-1}(J) \supseteq T_{\omega}(V)/I.$$
So $\widetilde{F}^{-1}(J)\subseteq I$, combining $\widetilde{F}(I) \subseteq J$ implies $\widetilde{F}(I)=J$, which leads to $F(T_{\omega}(V)/I) \cong T(F(V))/J$ and the proof is done. 
\end{proof}
	\subsection{Proof of Theorem \ref{thm1.3}}
 The situation considered in the \cite{huang2024classification} is the following: 	Let $G=Z_{2} \times Z_{2} \times Z_{2}=\left\langle e\right\rangle \times \left\langle f\right\rangle \times \left\langle g\right\rangle $ and $\omega$ the nontrivial $3$-cocycle on $G$:
	\begin{equation}		\omega(e^{i_1}f^{i_2}g^{i_3},e^{j_1}f^{j_2}g^{j_3},e^{k_1}f^{k_2}g^{k_3})=(-1)^{k_1j_2i_3} \label{4.1}
	\end{equation}
 for $ 0 \leq i_1,j_1,k_1 < 2, \ 0 \leq i_2, j_2, k_2 < 2, \ 0\leq i_3, j_3, k_3 <2$.  
 Let $V_1,V_2,V_3 \in \GG $ be three $2$-dimensional pairwise non-isomorphic simple objects in $\GG$ such that $\operatorname{deg}(V_1)=e$, $\operatorname{deg}(V_2)=f$, $\operatorname{deg}(V_3)=g$. Proposition 4.1 in \cite{huang2024classification}  just states that $\mathcal{B}(V_1\oplus V_2\oplus V_3)\in \GG$ must be infinite-dimensional. Our work on categorical Morita equivalence can give us a new proof now.

\begin{prop} \label{prop4.6}
	The category $\GGd$ is braided fusion equivalent to $\Dff$.
\end{prop}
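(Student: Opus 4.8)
The plan is to deduce this equivalence from the categorical Morita machinery of Section~3, mirroring the argument in the proof of Theorem~\ref{thm1.1}. First I would observe that the cocycle $\omega\bigl(g_1^{i_1}g_2^{i_2}g_3^{i_3},g_1^{j_1}g_2^{j_2}g_3^{j_3},g_1^{k_1}g_2^{k_2}g_3^{k_3}\bigr)=(-1)^{k_1j_2i_3}$ is exactly the normalized $3$-cocycle of the form (\ref{3.05}) associated to the sequence $\underline{a}=(0,0,0,0,0,0,1)$; that is, the single nonzero entry is $a_{123}=1$, coming from the triple-product term $\zeta_{(m_1,m_2,m_3)}^{a_{123}k_1j_2i_3}=(-1)^{k_1j_2i_3}$, while $a_1=a_2=a_3=0$ and all $a_{st}=0$. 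Reading off the index sets then gives $A_1=B_1=\emptyset$, $A_2=\{1\}$, $B_2=\{2,3\}$, hence $A=\{1\}$, $B=\{2,3\}$ and $A\cap B=\emptyset$. Since also $a_i=0$ for all $i$, both hypotheses of Theorem~\ref{thm1.2} hold, and therefore $\VG$ is categorical Morita equivalent to $\operatorname{Vec}_{G'}$ for the group $G'$ furnished by that construction.

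The central step is then to identify $G'$ explicitly as $D_8$. Following the construction in the proof of Theorem~\ref{thm1.2}, one takes $H=\langle g_1\rangle\cong Z_2$ (indexed by $A=\{1\}$) and $K=\langle g_2\rangle\times\langle g_3\rangle\cong Z_2\times Z_2$ (indexed by $I\setminus A=\{2,3\}$), with $G'=\widehat{H}\underset{\hat{F}}{\rtimes}K$, where the $2$-cocycle $\hat{F}$ specializes to
\[
\hat{F}\bigl(g_2^{i_2}g_3^{i_3},\,g_2^{j_2}g_3^{j_3}\bigr)=\chi_1^{\,j_2 i_3},
\]
and $\chi_1$ generates $\widehat{H}\cong Z_2$. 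Because $K$ acts trivially on $\widehat{H}$, this presents $G'$ as a central extension $1\to\widehat{H}\to G'\to K\to 1$.

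To pin down the isomorphism type I would compute the commutator pairing and the squares of lifts directly from $\hat{F}$. Writing $\tilde{x}$ for a lift of $x\in K$, the commutator of lifts satisfies $[\tilde{x},\tilde{y}]=\hat{F}(x,y)\hat{F}(y,x)^{-1}$, so $[\tilde{g_2},\tilde{g_3}]=\hat{F}(g_2,g_3)\hat{F}(g_3,g_2)^{-1}=1\cdot\chi_1^{-1}=\chi_1\neq 1$, whence $G'$ is non-abelian. Moreover $\tilde{g_2}^2=\hat{F}(g_2,g_2)=1$ and $\tilde{g_3}^2=\hat{F}(g_3,g_3)=1$, while $(\widetilde{g_2g_3})^2=\hat{F}(g_2g_3,g_2g_3)=\chi_1$ has order $2$, so $\widetilde{g_2g_3}$ has order $4$. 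Thus $G'$ is a non-abelian group of order $8$ possessing non-central involutions (namely $\tilde{g_2},\tilde{g_3}$, which are not central since they fail to commute); its center must be $\widehat{H}\cong Z_2$, as $G'/Z(G')$ cannot be cyclic. Among the two non-abelian groups of order $8$, only $D_8$ has involutions outside its center ($Q_8$ has a unique element of order $2$), so $G'\cong D_8$, and $\VG$ is categorical Morita equivalent to $\operatorname{Vec}_{D_8}$. Finally I would invoke the fact that categorical Morita equivalent fusion categories have braided equivalent Drinfeld centers (Theorem~3.1 in \cite{tensor}), together with the identifications $\GGd\cong\mathcal{Z}(\VG)$ and $\Dff\cong\mathcal{Z}(\operatorname{Vec}_{D_8})$ recorded in the proof of Lemma~\ref{prop4.8} and in \cite{center}; this yields a braided tensor equivalence $\GGd\cong\Dff$, as asserted.

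I expect the main obstacle to be the explicit determination of $G'$: correctly extracting $\hat{F}$ from the general formula of Theorem~\ref{thm1.2} (being careful which index feeds the exponent $j_2i_3$), and then rigorously distinguishing $D_8$ from $Q_8$ via the commutator and element orders. These computations are short but index-sensitive, and it is precisely here that the particular shape of $\omega$—its single triple term with $a_{123}=1$—enters decisively.
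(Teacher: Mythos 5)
Your proof is correct and takes essentially the same route as the paper: both apply Theorem~\ref{thm1.2} with $H=\langle g_1\rangle$, $K=\langle g_2\rangle\times\langle g_3\rangle$ and $\hat{F}\bigl(g_2^{i_2}g_3^{i_3},g_2^{j_2}g_3^{j_3}\bigr)=\chi_1^{j_2i_3}$, identify the resulting group $G'=\widehat{H}\underset{\hat{F}}{\rtimes}K$ with $D_8$, and then pass to Drinfeld centers via categorical Morita equivalence to get $\GGd\cong\mathcal{Z}(\VG)\cong\mathcal{Z}(\operatorname{Vec}_{D_8})\cong\Dff$. The only cosmetic difference is in identifying $G'$: the paper exhibits a dihedral presentation on the generators $(1,(f,g))$ and $(1,(f,1))$, while you rule out $Q_8$ by producing non-central involutions $\tilde{g_2},\tilde{g_3}$ and the order-$4$ element $\widetilde{g_2g_3}$ --- both computations are valid and equivalent in content.
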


\begin{proof}
	Existence of the finite group $H$ is immediately by Theorem \ref{thm1.2}. Explicitly, 
	take $A=\left\langle e\right\rangle $, $K=\left\langle f\right\rangle  \times \left\langle g\right\rangle $, $F=1$ in Lemma \ref{Lem3.1}. Let
	$$ \hat{F}(f^{i_2}g^{i_3},f^{j_2}g^{j_3})=\chi^{j_2i_3},$$
	where $\chi \in \widehat{A}$ is primitive such that $\chi(g_1)=-1$. 
	%Hence $$\hat{F}(f^{i_2}g^{i_3},f^{j_2}g^{j_3})(e^{k_1})=\zeta_{(m_1,m_2,m_3)}^{a_{123}k_1j_2i_3}$$
	and let $\varepsilon\equiv 1$, 
	%then$$\hat{F} \wedge F(f^{i_2}g^{i_3},f^{j_2}g^{j_3},f^{k_2}g^{k_3},f^{l_2}g^{l_3})=\hat{F}(f^{i_2}g^{i_3},f^{j_2}g^{j_3})(1)=1=\delta_K \varepsilon$$
	%Direct computation shows:
%	$$ \omega((e^{i_1},f^{i_2}g^{i_3}),(e^{j_1},f^{j_2}g^{j_3}),(e^{k_1},f^{k_2}g^{k_3}))=(-1)^{k_1j_2i_3}$$
%	$$\hat{\omega}((f^{i_2}g^{i_3},\chi^{i_1}),(f^{j_2}g^{j_3},\chi^{i_2}),(f^{k_2}g^{k_3},\chi^{i_3})=\chi^{i_1}(1)=1$$
	By Theorem \ref{thm1.2}, $\text{Vec}_G^{\omega}$ and $\text{Vec}_{{\widehat{Z_{2}}} \underset{\hat{F}}{\rtimes} ({Z_{2}\times Z_{2}} ) }$ are categorical Morita equivalent. Let $H={\widehat{Z_{2}}} \underset{\hat{F}}{\rtimes} {Z_{2}\times Z_{2}} $.  Actually, $H$ is isomorphic to $D_8$ since $H$ has such a presentation 
	\begin{align*}
	<(1, (f,g)),\ (1,(f,1)) \mid   (1, (f,g))^4&= (1,(1,1))=     (1,(f,1))^2,\\ &\  (1,(f,1))\cdot   (1, (f,g)) \cdot (1,(f,1))= (1, (f,g))^{-1}>.
	\end{align*}
	Hence 
	$$\GGd\simeq \mathcal{Z}(\text{Vec}_G^{\omega}) \simeq  \mathcal{Z}(\text{Vec}_{D_8}) \simeq \Dff$$ as braided fusion category.
\end{proof}
%By Lemma \ref{lem4.12} and Propostion \ref{prop4.7}, we may extend $\widetilde{F}$ to a tensor equivalence $F: \GG \rightarrow \Df$ which maps twisted Nichols algebra to Nichols algebra.
%Hence, to show $B(V)$ is infinite-dimensional, it suffices to show $F(B(V))$ is infinite-dimensional in $\Df$.
%\par 

%Now we are going to use Cartan matrix and the work of Heckenberger to prove the main theorem. Although it is difficult to compute the Cartan matrix of $B(F(V_1)\oplus F(V_2) \oplus F(V_3))\in \HH$ by direct computation, we could achieve it via calculating in $\GG$. 
Now we are going to prove Theorem \ref{thm1.3}.
\begin{proof}[Proof of Theorem \ref{thm1.3}]
Let $V=(V_1,V_2,V_3)$ be the $3$-tuple. Since $V_1, V_2, V_3$ are pairwise non-isomorphic, simple and  $F: \GGd \longrightarrow \Dff$ is a braided fusion equivalence  then $F(V_1)$, $F(V_2)$ and $F(V_3)$ are pairwise nonisomorphic and simple. 
\par Suppose  $\mathcal{B}(V) \in \GGd$ is finite-dimensional, then $F(\mathcal{B}(V))$ should be a finite-dimensional Nichols algebra of rank $3$ in $\Dff$ since $F$ maps  Nichols algebra in $\GGd$ to usual Nichols algebra by Proposition \ref{prop4.7}. But all Nichols algebra generated by three pairwise nonisomorphic simple Yetter-Drinfeld module over $D_8$ are infinite-dimensional by Theorem \ref{thm5.7}. This is a contradiction, so $\mathcal{B}(V)$ is infinite-dimensional.
\end{proof}
%\begin{cor}\label{cor4.18}
	%The corresponding coquasi-Hopf algebra $H:=\mathcal{B}(V)\# \vmathbb{k}G$ is infinite-dimensional. 
%\end{cor}
%\begin{proof}
	%It's immediately since $\mathcal{B}(V)$ is infinite-dimensional.
%\end{proof}
\begin{rmk}
	\rm It is not hard to see the our method can be applied to more general situation, for example, the case: $G$ is of the form $Z_{m_1}\times Z_{m_2}\times Z_{m_3}=\left\langle g_1\right\rangle  \times \left\langle g_2\right\rangle  \times \left\langle g_3\right\rangle $ and $\omega$ be a nontrivial $3$-cocycle on $G$:
	\begin{align*}
		& \omega\left( g_1^{i_1}  g_2^{i_2}g_3^{i_3}, g_1^{j_1} g_2^{j_2} g_3^{j_3}, g_1^{k_1} g_2^{k_2}g_3^{k_3}\right)  = \zeta_{\left(m_1, m_2, m_3\right)}^{a_{123} k_1 j_2 i_3}.
	\end{align*} 
\end{rmk}
 
	\bibliographystyle{alpha}\small
	\bibliography{Nichols-ref}
\hrulefill
	\begin{center}
		Bowen Li, School of Mathematics, Nanjing University,\\ Nanjing 210093, P. R. China\\
		E-mail: DZ21210002@smail.nju.edu.cn
		\\[10pt]
		Gongxiang Liu, School of Mathematics, Nanjing University,\\ Nanjing 210093, P. R. China\\
		E-mail: gxliu@nju.edu.cn
\end{center}
\end{document}